\documentclass[reqno]{amsart}            
\usepackage{amsthm,amssymb,amsfonts,graphicx,cite,color}
\usepackage[bookmarks,bookmarksnumbered,bookmarksopen,colorlinks,backref,linkcolor=blue,citecolor=red]{hyperref}%
\usepackage{xcolor}
\usepackage{graphicx}
\usepackage{mathrsfs}
\usepackage{bm}

\newtheorem{theorem}{Theorem}[section]
\newtheorem{definition}{Definition}[section]

\newtheorem{corollary}{Corollary}[section]
\newtheorem{remark}{Remark}[section]
\newtheorem{proposition}{Proposition}[section]
\numberwithin{equation}{section}

\newcommand{\rd}[1]{{\color{red}{#1}}}

\begin{document}
	
	\title[Reactive Transport in a Bulk Domain with a Branched Thin Fracture]
	{Dimension Reduction and Asymptotic Approximation of Reactive Transport in a Bulk Domain with a Branched Thin Fracture}

	\author[Taras Mel'nyk \ \& \ Christian Rohde]{Taras Mel'nyk$^\flat$ \  \& \ Christian Rohde$^\natural$  }
	\address{\hskip-12pt  $^\flat$ 
		1) \, Institute of Applied Analysis and Numerical Simulation,
		Faculty of Mathematics and Physics, Stuttgart University\\
		Pfaffenwaldring 57,\ 70569 Stuttgart,  \ Germany,\\
		\ \ 2)
		Department of Mathematical Physics, Faculty of Mathematics and Mechanics\\
		Taras Shevchenko National University of Kyiv\\
		Volodymyrska str. 64,\ 01601 Kyiv,  \ Ukraine
	}
	\email{taras.melnyk@mathematik.uni-stuttgart.de}

	\address{\hskip-12pt  $^\natural$ Institute of Applied Analysis and Numerical Simulation,
		Faculty of Mathematics and Physics, Stuttgart University\\
		Pfaffenwaldring 57,\ 70569 Stuttgart,  \ Germany
	}
	\email{christian.rohde@mathematik.uni-stuttgart.de }

	\begin{abstract}
	We study nonlinear reactive transport in a two‑dimensional bulk domain containing a thin branched fracture composed of three narrow branches of thickness $\varepsilon$ connected through a junction node of diameter $\mathcal{O}(\varepsilon).$ The microscopic model couples nonlinear parabolic reaction–diffusion equations in the bulk with an advection–diffusion equation in the fracture, where the longitudinal Péclet number is of order $\varepsilon^{-1},$ leading to advection‑dominated transport along the branches. Nonlinear side‑dependent flux conditions describe the coupling between the bulk and the fracture. As $\varepsilon \to 0,$ 
	the fracture collapses to a one‑dimensional graph, and we derive a recurrent structure of effective limit problems: a first‑order hyperbolic problem on the graph satisfying the classical Kirchhoff transmission condition at the node, and reaction–diffusion equations in the bulk with nonlinear Robin conditions on the graph edges involving the graph solution. 
	The node boundary conditions of the microscopic model do not affect these leading‑order limits. To capture the influence of the node geometry, we construct node‑layer and corner‑layer correctors and determine subsequent terms of the asymptotics expansion. Their coefficients solve auxiliary boundary‑value problems in unbounded domains with outlets at infinity and in corner‑type geometries. 
	We assemble a complete multiscale approximation combining bulk, branch, node‑layer, and corner‑layer contributions, and establish quantitative error estimates in appropriate energy norms. These estimates demonstrate the accuracy of the approximation relative to 
	$\varepsilon,$ and depend explicitly on the corner angles of the limiting graph, reflecting the geometric complexity of the fracture network.
	\end{abstract}

	\maketitle
	\tableofcontents
	

	\section{Introduction}
	
	Thin fractures, whose width is small compared to their length, act as preferential pathways for solute transport in many heterogeneous media: porous geological structures with thin crack networks, biological tissues with embedded blood vessels or root systems, microfluidic devices, and others. The geometric complexity of such fractures makes direct numerical simulation prohibitively expensive, motivating the derivation of effective macroscopic models in which thin fractures are replaced by lower-dimensional interfaces; see, for example, \cite{Ber-Grap-2022,Koch-2022,Wohl-2020} and references therein. However, many existing reduction approaches rely on simplifications or assumptions that do not accurately reflect the underlying physical processes, and their mathematical justification is often incomplete. This motivates the development of rigorous asymptotic methods capable of capturing the influence of fracture microstructure and explaining the resulting transport behavior.
	
	Rigorous justifications using various asymptotic approaches have been carried out in many works for models where a fracture is represented by a \emph{single} thin channel or layer of thickness~$\varepsilon$. Recent contributions \cite{Gahn-Radu-2018,Gahn-Jager-Radu-2022,Gahn-Jager-Radu-2021,Gahn-Radu-2025,Freud-Eden-2026,Hoerl-Rohde-2024,Lis-Kum-Pop-Rad-2020,Mel-Pop-Rohde-preprint-2026,Orl-Pan-2021} derive macroscopic dimensionally reduced models with effective interface laws in the limit $\varepsilon\to0$ and contain detailed reviews of the state of the art. Error estimates for thin heterogeneous layers coupled to bulk domains have hardly been considered in the literature so far; among the cited works, only \cite{Gahn-Jager-Radu-2021,Orl-Pan-2021,Mel-Pop-Rohde-preprint-2026} provide asymptotic error estimates.  
	
	However, to the best of our knowledge, no rigorous asymptotic justification is currently available for models involving \emph{branched} thin fractures embedded in a bulk medium. The present work initiates such an analysis.

	We study reactive, advection-dominated transport in a thin two-dimensional fracture network embedded in a bulk domain. Our focus is on understanding how the \emph{local nodal geometry}, where several thin branches meet, and the \emph{nonlinear physical interactions} at the fracture--bulk interface influence the macroscopic transport behavior. 
	
	This understanding is crucial in many applications: in geological formations,
	where thin crack networks control contaminant migration and reaction rates; in
	biological tissues, where vascular and root junctions regulate perfusion and
	chemical signaling; and in engineered porous materials and microfluidic
	devices, where nodal regions determine flow partitioning and reaction
	efficiency. These examples highlight the importance of local nodal geometry
	and nonlinear fracture–bulk interactions in shaping macroscopic transport
	behavior.
	
	\begin{figure}[htbp]  
		\vspace*{-0.3cm}
		\begin{center}
			\includegraphics[width=6cm]{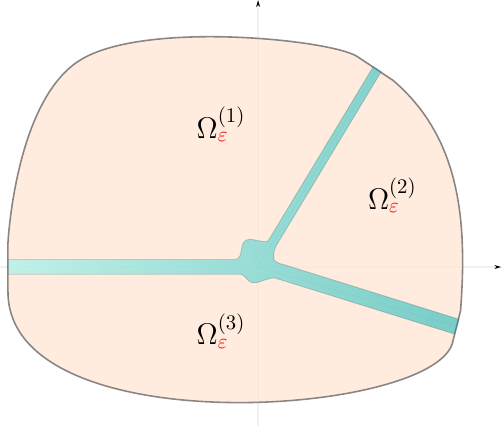} \qquad \qquad \includegraphics[width=6cm]{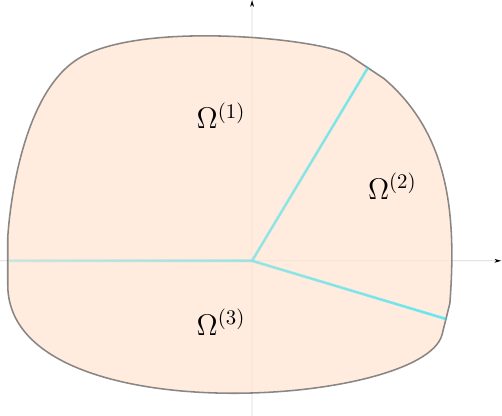}
		\end{center}
		\vspace*{-0.4cm}
		\caption{{\small a) On the left, the domain $\Omega$ is partitioned into three subdomains $\Omega_{\rd{\varepsilon}}^{(1)}$, $\Omega_{\rd{\varepsilon}}^{(2)}$ and $\Omega_{\rd{\varepsilon}}^{(3)}$ by a thin branched fracture  $\mathcal{R}_{\rd{\varepsilon}}$ of thickness$\mathcal{O}({\rd{\varepsilon}})$; \ b) on the right, the domain $\Omega$ after the degeneration of $\mathcal{R}_{\rd{\varepsilon}}$ into a graph $\mathcal{I}$ as $\rd{\varepsilon} \to 0$}}\label{fig1}
	\end{figure}
	
	Since this is the first rigorous study of processes in such a bulk‑fractured medium, we begin with a simplified fracture geometry and problem formulation; see Section~\ref{Sect-5} for a discussion of possible generalizations.
	
	For a small parameter $\varepsilon$, the thin fracture $\mathcal{R}_\varepsilon$ consists of three thin branches $\{\mathcal{R}^{(i)}_\varepsilon\}_{i=1}^3$ of thickness of order $\varepsilon$, connected by a small node $\mathcal{R}^{(0)}_\varepsilon$ of diameter of order $\varepsilon$. The fracture lies inside a bounded domain $\Omega \subset \mathbb{R}^2$ and divides it into three subdomains
	$\Omega_\varepsilon^{(1)},$ $\Omega_\varepsilon^{(2)},$ $\Omega_\varepsilon^{(3)},$
	see Fig.~\ref{fig1} on the left. A detailed description of the geometry and the problem formulation is given in Section~\ref{Sec-2}.
	
	As $\varepsilon\to0$, the fracture $\mathcal{R}_\varepsilon$ collapses to a one-dimensional graph
	$\mathcal{I}:=\mathcal{I}_1\cup\mathcal{I}_2\cup\mathcal{I}_3,$
	and each domain $\Omega_\varepsilon^{(i)}$ is transformed into a domain $\Omega^{(i)},$ so that $\Omega\setminus\mathcal{I}=\bigcup_{i=1}^{3}\Omega^{(i)},$ see Fig.~\ref{fig1} on the right. The boundary of each domain $\Omega^{(i)}$ contains a corner where two edges of the graph $\mathcal{I}$ meet with opening angle $\theta_i < \pi$.
	
	Transport in each bulk region $\Omega_\varepsilon^{(i)}$ is described by the nonlinear reaction--diffusion equation
	\[
	\partial_t u_\varepsilon^{(i)} - D_i \Delta_x u_\varepsilon^{(i)}
	+ F_i\big(u_\varepsilon^{(i)}, x, t\big) = f_i(x,t),
	\]
	and inside the thin fracture $\mathcal{R}_\varepsilon$ it is governed by the advection--diffusion equation
	\[
	\partial_t w_\varepsilon - \varepsilon\, \Delta_x w_\varepsilon +
	\mathrm{div}_x\big(w_\varepsilon\, \overrightarrow{V_\varepsilon}\big)
	= 0,
	\]
	where $\overrightarrow{V_\varepsilon}$ is a given advection field such that the associated Péclet number (a ratio of advective to diffusive transport rates) is proportional to $\varepsilon^{-1}$ in the
	longitudinal direction of the branches.

	The coupling between the bulk domains and the fracture is described by nonlinear side-dependent flux conditions
	\begin{equation}\label{int-1}
		D_i \nabla_x u^{(i)}_{\varepsilon} \cdot \boldsymbol{\nu}_\varepsilon
		= \Psi_0\big(u^{(i)}_{\varepsilon}, w_{\varepsilon}, \tfrac{x}{\varepsilon}, t \big),
		\qquad
		- \varepsilon \nabla_x w_{\varepsilon} \cdot \boldsymbol{\nu}_\varepsilon
		= \varepsilon^{\beta} \Phi_0\big(u^{(i)}_{\varepsilon}, w_{\varepsilon},\tfrac{x}{\varepsilon}, t \big),
	\end{equation}
	on the part $\Gamma^{(i,0)}_\varepsilon$ of the node boundary, and
	\begin{equation}\label{int-2}
		D_i \nabla_x u^{(i)}_{\varepsilon} \cdot \boldsymbol{\nu}_\varepsilon
		= \Psi^{(i,j)}\big(u^{(i)}_{\varepsilon}, w_{\varepsilon}, \ldots\big),
		\qquad
		\big(-\varepsilon \nabla_x w_\varepsilon + w_\varepsilon \overrightarrow{V_\varepsilon} \big) \cdot \boldsymbol{\nu}_\varepsilon
		= \varepsilon^\alpha \Phi^{(i,j)}\big(u^{(i)}_{\varepsilon}, w_{\varepsilon}, \ldots\big),
	\end{equation}
	on the lateral sides $\Gamma^{(i,j)}_\varepsilon$ of the thin branches. Here, $\alpha$ and $\beta$ are fixed intensity parameters. In the present paper we consider the case $\alpha=2$ and $\beta=1$, while other regimes are discussed in Section~\ref{Sect-5}.
	
	To analyze the behavior of reactive transport in this fractured medium, we develop a multiscale asymptotic strategy that resolves six  distinct geometric and physical scales.
	
	(i) In the bulk regions, the solution exhibits smooth behavior governed by nonlinear reaction–diffusion equations for the leading-order terms $\{u^{(i)}_0\}_{i=1}^3$, subject to nonlinear Robin conditions 
	\begin{equation}\label{int-3}
		D_i\nabla_x u^{(i)}_0\cdot\boldsymbol{\nu}_0
		=  \Psi^{(i,j)}\big(u^{(i)}_0,w^{(j)}_0, \ldots\big),
	\end{equation}
	on the graph edges (see problems \eqref{relations+}).

	(ii) Inside each thin branch, transport is advection‑dominated with a longitudinal Péclet number of order $\varepsilon^{-1}$ and interacts with the bulk reactivity through the flux boundary conditions \eqref{int-2}. This requires a two‑scaled ansatz (see \eqref{regul-1+}) that separates slow variation along the branch from rapid variation across its thickness.
	
	(iii) Near the junction node, the solution develops a \textit{node-layer} \eqref{junc} whose properties reflect the local geometry and the nonlinear flux interactions. This layer is essential for determining the coupling conditions on the limiting graph, since its matching with the two‑scaled ansatzes in the branches produces the correct transmission behavior at the vertex.
	As a consequence, the fracture leading-order terms $\{w_0^{(j)}\}$ satisfy a first-order hyperbolic problem \eqref{limit_prob} on the graph $\mathcal{I}$ with the classical Kirchhoff condition at the vertex:
	\begin{equation}\label{Kirch-1}
		\sum_{j=1}^3 v_j h_j\, w_0^{(j)}(0,t)=0.
	\end{equation}
	
	Problems \eqref{relations+} and \eqref{limit_prob} thus form a recurrent structure of coupled limit problems: a hyperbolic equation posed on the graph and, simultaneously, reaction–diffusion problems in the bulk whose boundary conditions \eqref{int-3} explicitly involve the graph solution.
	
	Crucially, the node boundary condition \eqref{int-1} and the second lateral conditions in \eqref{int-2} have no influence on these leading-order limit problems. This is natural: for problems such as \eqref{relations+}, a condition imposed at a single point has no mathematical meaning.
	
	Thus, the main novelty of the present paper lies in finding the subsequent terms of the asymptotics and in constructing  an asymptotic approximation that rigorously captures the influence of the node-boundary conditions.
	
	As will be demonstrated, this influence is most clearly manifested in a nonstandard Kirchhoff transmission condition
		\begin{equation}\label{Kirch-2}
			\sum_{j=1}^{3} 2 h_j\,\mathrm{v}_j\, w_1^{(j)}(0,t)
			= w^{(1)}_0(0,t)\, |\Xi^{(0)}|
			- \sum_{i=1}^{3}\int_{\Gamma^{(i,0)}}\Phi_0\big(u^{(i)}_0, w^{(1)}_0(0,t), \xi,t\big)\, dl_\xi
		\end{equation}
		for the fracture second-order terms $\{w_1^{(j)}\},$  forming the mixed hyperbolic problem \eqref{prob_w_1} on the graph (here $|\Xi^{(0)}|$ denotes the Lebesgue measure of the rescaled node).
		
	(iv) To capture this influence in the bulk domains, specially designed  \textit{corner-layer} ansatzes are introduced in 
	Subsection~\ref{corner-2-4}. Their coefficients $\{\mathcal{K}^{(i)}_1\}_{i=1}^3$  solve auxiliary
	boundary-value problems in unbounded  corner-type domains with the Neumann condition
	\[
	D_i \nabla_\zeta \mathcal{K}^{(i)}_1 \cdot \boldsymbol{\nu}_\zeta
	= - \Psi_0\big(u_0^{(i)}(0,t), w_0^{(1)}(0,t),\ldots\big)
	\]
	on the corresponding rescaled part of the node boundary. An important feature of these corner-layer correctors is their 
	logarithmic behavior at infinity, governed by Kondrat’ev’s corner‑singularity theory.  
	
	(v) To neutralize this growth, additional \emph{bulk correctors} are introduced in \S~\ref{bulk-corr}, whose leading behavior near the
	vertex matches the logarithmic term generated by the corner-layers. This ensures that the bulk and inner-corner asymptotics are compatible in the intermediate region and that the composite expansion remains uniformly valid
	throughout $\Omega^{(i)}$.
	
	(vi) In the two outflow branches (with respect to the advection field), the two‑scaled ansatz fails to satisfy the Dirichlet outflow conditions on the outer sides. To restore the correct boundary behavior, we construct \textit{boundary‑layer} correctors in Subsection~\ref{subsec_Bound_layer}, which remove the resulting residuals and exponentially vanish at infinity. These layers complete the proposed multiscale scheme by enforcing the microscopic outflow conditions.
	
	The main result of the paper is the construction of a complete multiscale approximation incorporating bulk, branch, boundary-layer, node-layer, and corner-layer contributions, together with quantitative error energy estimates (Theorem~\ref{apriory_estimate}) demonstrating the accuracy of the approximation. These estimates depend explicitly on the corner angles
	$\theta_1,\theta_2,\theta_3$ of the graph, reflecting the geometric complexity
	of the fracture network.

	The paper is organized as follows. Section~\ref{Sec-2} introduces the geometric configuration of the bulk domain and the thin fracture network, formulates the assumptions on the data, and presents the microscopic problem in precise form. Section~\ref{Sect-3} develops the asymptotic analysis for the various parts of the fractured medium, including the node-layer and corner-layer constructions and the auxiliary boundary-value problems in unbounded domains. It derives the effective limit problems on the graph and in the bulk regions and establishes the well-posedness of these problems and the regularity of their solutions. Section~\ref{Sect-4} assembles the multiscale approximation in the entire domain, computes and estimates the residuals that this approximation leaves in the differential equations and boundary conditions of the original problem, and proves the rigorous error estimates that confirm the accuracy of the approximation and the adequacy of the limit problems. Finally, Section~\ref{Sect-5} discusses the results, possible extensions of the analysis, and directions for future research.

	
	\section{Problem statement}\label{Sec-2}
	
	We begin by describing the geometry of the thin fracture network embedded in the bulk
	domain. A description of the advection field is given in \S~\ref{convextion-flux}, while the
	problem formulation and the assumptions on the data are presented in \S~\ref{ibvp}.
	
	\subsection{Domain structure}\label{subsec-1-1}
	
	\subsubsection*{Thin graph-like fracture}\label{domain}
	
	In the Euclidean plane $\mathbb{R}^2$ consider three unit vectors
	$\mathbf{e}^{(i)}=(e_1^{(i)},e_2^{(i)})$, $i\in\{1,2,3\}$,
	with $\mathbf{e}^{(1)}=(-1,0)$, $e_1^{(i)}>0$ for $i\in\{2,3\}$, and
	$e_2^{(2)}>0$, $e_2^{(3)}<0$.
	
	For each $i\in\{1,2,3\}$ let
	$\boldsymbol{\varsigma}^{(i)}=(\varsigma_1^{(i)},\varsigma_2^{(i)})$
	be the unit vector obtained by rotating $\mathbf{e}^{(i)}$ by $90^\circ$
	counterclockwise. The pair $(\mathbf{e}^{(i)},\boldsymbol{\varsigma}^{(i)})$
	is therefore an orthonormal, positively oriented basis of $\mathbb{R}^2$.
	
	Thus, every point $x=(x_1,x_2)\in\mathbb{R}^2$ has coordinates
	$y^{(i)}=(y_1^{(i)},y_2^{(i)})$ in the basis
	$(\mathbf{e}^{(i)},\boldsymbol{\varsigma}^{(i)})$, related to the standard
	Cartesian coordinates $x$ by
	\begin{equation}\label{matrix}
		y^{(i)}=\mathbb{A}_i\,x,
		\qquad
		\mathbb{A}_i=\begin{pmatrix}
			e_1^{(i)} & e_2^{(i)}
			\\[4pt]
			\varsigma_1^{(i)} & \varsigma_2^{(i)}
		\end{pmatrix}.
	\end{equation}
	Since the rows of $\mathbb{A}_i$ are orthonormal basis vectors, we have
	$\mathbb{A}_i^{-1}=\mathbb{A}_i^{T}$ and $\det\mathbb{A}_i=1$.
	
	In each coordinate system $(\mathbf{e}^{(i)},\boldsymbol{\varsigma}^{(i)})$
	we define the thin rectangle
	\[
	\mathcal{R}_\varepsilon^{(i)}
	=\left\{\,y^{(i)}\in\mathbb{R}^2:\ 
	\varepsilon\ell_0<y_1^{(i)}<\ell_i,\ 
	|y_2^{(i)}|<\varepsilon h_i\,\right\},
	\]
	where $\varepsilon>0$ is a small parameter, $h_i>0$ is a constant,
	$\ell_0\in(0,\tfrac{1}{3})$, and $\ell_i\ge1$ are given numbers. We denote by
	\[
	\Upsilon_\varepsilon^{(i)}(\mu)
	:=\mathcal{R}_\varepsilon^{(i)}\cap
	\{\,y^{(i)}:\ y_1^{(i)}=\mu\,\},
	\qquad
	\mu\in[\varepsilon\ell_0,\ell_i],
	\]
	the cross-section of $\mathcal{R}_\varepsilon^{(i)}$ at $y_1^{(i)}=\mu$.
	
	\begin{remark}
		In what follows, a domain $Q$ described in the coordinates $x$ will be
		denoted by the same symbol $Q$ when described in the coordinates $y^{(i)}$;
		the coordinate system used will always be clear from the context.
	\end{remark}
	
	The thin rectangles are joined through a domain $\mathcal{R}_\varepsilon^{(0)}$
	(referred to as the \emph{node}) obtained by the homothetic transformation
	with coefficient $\varepsilon$ from a bounded domain $\Xi^{(0)}$ containing
	the origin:
	\begin{equation}\label{node}
		\mathcal{R}_\varepsilon^{(0)}=\varepsilon\,\Xi^{(0)}.
	\end{equation}
	We assume that the boundary $\partial\Xi^{(0)}$ contains the segments
	\[
	\Upsilon^{(i)}_1(\ell_0)
	:=\overline{\Xi^{(0)}}\cap\{\,x:\ y_1^{(i)}=\ell_0\,\},
	\qquad i\in\{1,2,3\}.
	\]

	Thus, the thin graph-like fracture $\mathcal{R}_\varepsilon$ is the interior of
	the union $\bigcup_{i=0}^{3}\overline{\mathcal{R}_\varepsilon^{(i)}}$.
	We assume that the surface
	$\partial\mathcal{R}_\varepsilon\setminus\bigcup_{i=1}^{3}\overline{\Upsilon_\varepsilon^{(i)}(\ell_i)}$
	is $C^3$-smooth.
	
	The fracture lies inside a bounded domain $\Omega\subset\mathbb{R}^2$ with
	$C^3$-smooth boundary $\partial\Omega$, and we assume that $\partial\Omega$
	contains three boundary intervals, each containing one of the segments
	$\overline{\Upsilon_\varepsilon^{(1)}(\ell_1)}$,
	$\overline{\Upsilon_\varepsilon^{(2)}(\ell_2)}$,
	$\overline{\Upsilon_\varepsilon^{(3)}(\ell_3)}$.
	Thus $\mathcal{R}_\varepsilon$ divides $\Omega$ into three subdomains
	$\Omega_\varepsilon^{(1)},\Omega_\varepsilon^{(2)},\Omega_\varepsilon^{(3)}$
	(see Fig.~\ref{fig1}), i.e.
	$
	\Omega\setminus\overline{\mathcal{R}_\varepsilon}
	=\bigcup_{i=1}^{3}\Omega_\varepsilon^{(i)}.
	$
	\begin{figure}[htbp]  
		\vspace*{-0.3cm}
		\begin{center}
			\includegraphics[width=6cm]{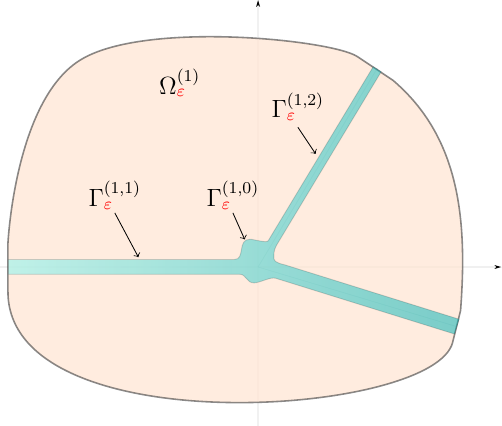}
		\end{center}
		\vspace*{-0.4cm}
		\caption{{\small Parts of the interface between $\Omega_{\rd{\varepsilon}}^{(1)}$ and the thin fracture $\mathcal{R}_{\rd{\varepsilon}}$}}\label{fig2}
	\end{figure}
	
	We represent the common part of the boundary of each domain
	$\Omega_\varepsilon^{(i)}$ and the thin fracture $\mathcal{R}_\varepsilon$ (see Fig.~\ref{fig2} for $\Omega_\varepsilon^{(1)}$)
	as the unions
	\begin{gather}\label{com1}
		\partial\Omega_\varepsilon^{(1)}\cap\partial\mathcal{R}_\varepsilon
		=\Gamma_\varepsilon^{(1,1)}\cup\Gamma_\varepsilon^{(1,0)}\cup\Gamma_\varepsilon^{(1,2)},
		\\[4pt]
		\label{com2}
		\partial\Omega_\varepsilon^{(2)}\cap\partial\mathcal{R}_\varepsilon
		=\Gamma_\varepsilon^{(2,2)}\cup\Gamma_\varepsilon^{(2,0)}\cup\Gamma_\varepsilon^{(2,3)},
		\\[4pt]
		\label{com3}
		\partial\Omega_\varepsilon^{(3)}\cap\partial\mathcal{R}_\varepsilon
		=\Gamma_\varepsilon^{(3,1)}\cup\Gamma_\varepsilon^{(3,0)}\cup\Gamma_\varepsilon^{(3,3)},
	\end{gather}
	where the first index $i$ in $\Gamma_\varepsilon^{(i,j)}$ indicates the
	connection to $\partial\Omega_\varepsilon^{(i)}$, and the second index $j$
	indicates the side of the corresponding rectangle $\mathcal{R}_\varepsilon^{(j)}$
	if $j\neq0$, or the corresponding part of the boundary of the node
	$\mathcal{R}_\varepsilon^{(0)}$ if $j=0$.
	
	As $\varepsilon\to0$, the thin junction $\mathcal{R}_\varepsilon$ shrinks to
	the graph
	\begin{equation}\label{graph}
		\mathcal{I}:=\mathcal{I}_1\cup\mathcal{I}_2\cup\mathcal{I}_3,
	\end{equation}
	where
	$\mathcal{I}_i :=\{\,y^{(i)}\in\mathbb{R}^2:\ 0\le y_1^{(i)}<\ell_i,\ y_2^{(i)}=0\,\},$ and each domain $\Omega_\varepsilon^{(i)}$ is transformed into a domain
	$\Omega^{(i)}$. Thus
	\begin{equation}\label{limit-domain}
		\Omega\setminus\mathcal{I}=\bigcup_{i=1}^{3}\Omega^{(i)}.
	\end{equation}
	The boundary of each domain $\Omega^{(i)}$ contains a corner at the origin,
	where two edges of the graph $\mathcal{I}$ meet with opening angle $\theta_i$.
	The assumptions on the unit vectors $\mathbf{e}^{(1)},\mathbf{e}^{(2)},\mathbf{e}^{(3)}$
	imply $\theta_i<\pi$. We also assume that, for each $i\in\{1,2,3\}$, the curve
	$\Gamma_\varepsilon^{(i,0)}$ lies inside the domain $\Omega^{(i)}$, which means
	that $\Omega_\varepsilon^{(i)}\subset\Omega^{(i)}$.

	
	\subsection{Description of the advection vector field $\overrightarrow{V_\varepsilon}$ in the fracture  $\mathcal{R}_\varepsilon$}\label{convextion-flux}
	
	We assume first that all components of the vector-valued function
	$\overrightarrow{V_\varepsilon}$ belong to the space
	$C^{5}\big(\overline{\mathcal{R}_\varepsilon}\big)$.
	The structure of $\overrightarrow{V_\varepsilon}$ depends on the part of the
	thin junction $\mathcal{R}_\varepsilon$. In the node $\mathcal{R}_\varepsilon^{(0)}$ we set
	\[
	\overrightarrow{V_\varepsilon}(x)
	=\big(v^{(0)}_1(\tfrac{x}{\varepsilon}),\,
	v^{(0)}_2(\tfrac{x}{\varepsilon})\big)
	=: \overrightarrow{V_\varepsilon}^{(0)}(x),
	\]
	and in each thin rectangle we define
	\begin{equation}\label{conv-1}
		\overrightarrow{V_\varepsilon}^{(i)}(y^{(i)})
		:=\mathbb{A}_i\,\overrightarrow{V_\varepsilon}(x)\big|_{x=\mathbb{A}_i^{-1}y^{(i)}}
		=\Big(v^{(i)}_1(y_1^{(i)}),\,
		\varepsilon\,v^{(i)}_2\big(y_1^{(i)},\tfrac{y_2^{(i)}}{\varepsilon}\big)\Big),
		\qquad y^{(i)}\in\mathcal{R}_\varepsilon^{(i)},
	\end{equation}
	in the rotated coordinates $y^{(i)}$, where the matrix $\mathbb{A}_i$ is defined in~\eqref{matrix}.
	
	For each $i\in\{1,2,3\}$ we assume that the longitudinal component
	$v^{(i)}_1(y_1^{(i)})$, $y_1^{(i)}\in[0,\ell_i]$, is equal to a constant
	$\mathrm{v}_i$ on a small interval $[0,\delta_i]$.
	The transversal component
	$v^{(i)}_2\big(y_1^{(i)},\tfrac{y_2^{(i)}}{\varepsilon}\big)$
	has compact support with respect to the longitudinal variable $y_1^{(i)}$;
	in particular, it vanishes on $[0,\delta_i]$.
	Consequently,
	\[
	\overrightarrow{V_\varepsilon}^{(i)}
	=\mathrm{v}_i\,\mathbf{e}^{(i)}
	\quad\text{in the coordinates $x$ near the side }
	\Upsilon_\varepsilon^{(i)}(\varepsilon\ell_0)
	\text{ of }\mathcal{R}_\varepsilon^{(i)}
	\text{ for $\varepsilon$ sufficiently small}.
	\]
	
	By the smoothness of $\overrightarrow{V_\varepsilon}$ we also have
	\begin{equation}\label{V_1}
		\overrightarrow{V_\varepsilon}^{(0)}(x)\big|_{x\in\Upsilon_\varepsilon^{(i)}(\varepsilon\ell_0)}
		=\mathrm{v}_i\,\mathbf{e}^{(i)}.
	\end{equation}
	
	Furthermore, for all $y_1^{(i)}\in[0,\ell_i]$ we assume
	\begin{equation}\label{V_2}
		v^{(1)}_1<0,
		\qquad
		v^{(i)}_1>0\quad\text{for }i\in\{2,3\},
	\end{equation}
	which means that the vector field $\overrightarrow{V_\varepsilon}$ enters the
	thin branch $\mathcal{R}_\varepsilon^{(1)}$ and leaves
	$\mathcal{R}_\varepsilon^{(2)}$ and $\mathcal{R}_\varepsilon^{(3)}$.
	
	For the vector field in the node $\mathcal{R}_\varepsilon^{(0)}$ we impose an
	additional assumption: it is generated by a potential $p$
	solving the boundary-value problem
	\begin{equation}\label{potential0}
		\left\{
		\begin{array}{rcll}
			\Delta_\xi p(\xi) &=& 0, & \xi:=(\xi_1,\xi_2)\in\Xi^{(0)},
			\\[2pt]
			\partial_{\boldsymbol{\nu}_\xi}p(\xi) &=& \mathrm{v}_i,
			& \xi\in\Upsilon^{(i)}_1(\ell_0),\quad i\in\{1,2,3\},
			\\[2pt]
			\partial_{\boldsymbol{\nu}_\xi}p(\xi) &=& 0,
			& \xi\in\partial\Xi^{(0)}\setminus\bigcup_{i=1}^3\Upsilon^{(i)}_1(\ell_0),
		\end{array}
		\right.
	\end{equation}
	where $\Delta_\xi$ is the Laplace operator and
	$\partial_{\boldsymbol{\nu}_\xi}$ denotes the outward normal derivative on
	$\partial\Xi^{(0)}$.
	The Neumann problem \eqref{potential0} has a solution if and only if
	\begin{equation}\label{cond_1}
		\sum_{i=1}^{3} h_i\,\mathrm{v}_i = 0.
	\end{equation}
	To ensure uniqueness we additionally impose the normalization
	$\int_{\Xi^{(0)}} p\,d\xi = 0$.
	Thus,
	\begin{equation}\label{field_0}
		\overrightarrow{V_\varepsilon}^{(0)}(x)
		=\nabla_\xi p(\xi)\big|_{\xi=x/\varepsilon}
		=\varepsilon\,\nabla_x\big(p(\tfrac{x}{\varepsilon})\big),
		\qquad x\in\mathcal{R}_\varepsilon^{(0)}.
	\end{equation}
	
	Since $p$ is harmonic, the advection field is divergence-free and mass-conserving, i.e.,
	$\mathrm{div}_x\overrightarrow{V_\varepsilon}^{(0)}=0$ in
	$\mathcal{R}_\varepsilon^{(0)}$, and the amount of flow entering $\mathcal{R}_\varepsilon^{(0)}$
	equals the amount leaving it.
	\begin{remark}
		The advection field $\overrightarrow{V_\varepsilon}$ may also depend on time
		$t\in[0,T]$, as in \cite{Mel-Roh_JMAA-2024}.
	\end{remark}
	
	
	\subsection{The initial–boundary value problem}\label{ibvp}
	
	We introduce the piecewise-defined function
	\begin{equation}\label{orig-solution}
		U_\varepsilon(x,t):=
		\begin{cases}
			u_\varepsilon^{(i)}(x,t), & (x,t)\in\Omega_\varepsilon^{(i)}\times(0,T),\quad i\in\{1,2,3\},
			\\[3pt]
			w_\varepsilon(x,t), & (x,t)\in\mathcal{R}_\varepsilon\times(0,T),
		\end{cases}
	\end{equation}
	which collects the unknowns in the bulk subdomains and in the thin graph-like junction.
	The evolution of $U_\varepsilon$ is governed by coupled parabolic diffusion-reaction equations and advection–diffusion
	equations posed on the composite geometry of $\Omega$.
	
	In each bulk component $\Omega_{\varepsilon}^{(i)}$, the function
	$u_\varepsilon^{(i)}$ satisfies the parabolic diffusion-reaction equation
	\begin{equation}\label{p-equations}
		\partial_t u_\varepsilon^{(i)} - D_i \Delta_x u_\varepsilon^{(i)}
		+ F_i\big(u_\varepsilon^{(i)}, x, t\big) = f_i(x,t)
		\quad \text{in } \Omega_\varepsilon^{(i)}\times(0,T),\qquad i\in\{1,2,3\},
	\end{equation}
	where $D_i>0$ denotes the diffusion coefficient.
	Within the thin region $\mathcal{R}_\varepsilon$, the component $w_\varepsilon$
	satisfies the advection–diffusion equation
	\begin{equation}\label{p-equations+}
		\partial_t w_\varepsilon - \varepsilon\,\Delta_x w_\varepsilon
		+ \mathrm{div}_x\big(w_\varepsilon\,\overrightarrow{V_\varepsilon}\big)
		= 0
		\quad \text{in } \mathcal{R}_\varepsilon\times(0,T),
	\end{equation}
	with the standard notation
	$\partial_t := \frac{\partial}{\partial t}$ and
	$\Delta_x := \partial_{x_1}^2 + \partial_{x_2}^2$.
	
	On the outer boundary parts we prescribe
	\begin{equation}\label{p-outer-bc}
		u_\varepsilon^{(i)} = 0
		\quad \text{on } \partial\Omega\cap\partial\Omega_\varepsilon^{(i)},
		\qquad
		w_\varepsilon = q_i(t)
		\quad \text{on } \Upsilon_\varepsilon^{(i)}(\ell_i),\ t\in(0,T),\ i\in\{1,2,3\}.
	\end{equation}
	
	On the common interfaces $\partial\Omega_\varepsilon^{(i)}\cap\partial\mathcal{R}_\varepsilon$
	(see \eqref{com1}–\eqref{com3}) we impose nonlinear side-dependent flux coupling
	conditions. On the node interfaces $\Gamma_\varepsilon^{(i,0)}$ we set
	\begin{equation}\label{coup-1}
		\left\{
		\begin{array}{rcll}
			D_i \nabla_x u^{(i)}_{\varepsilon} \cdot \boldsymbol{\nu}_\varepsilon
			&=& \Psi_0\big(u^{(i)}_{\varepsilon}, w_{\varepsilon}, \tfrac{x}{\varepsilon}, t\big)
			& \text{on } \Gamma^{(i,0)}_\varepsilon\times(0,T),
			\\[2mm]
			-\varepsilon\,\nabla_x w_{\varepsilon} \cdot \boldsymbol{\nu}_\varepsilon
			&=& \varepsilon\,\Phi_0\big(u^{(i)}_{\varepsilon}, w_{\varepsilon}, \tfrac{x}{\varepsilon}, t\big)
			& \text{on } \Gamma^{(i,0)}_\varepsilon\times(0,T),
		\end{array}
		\right.
	\end{equation}
	where $\boldsymbol{\nu}_\varepsilon$ denotes the outward unit normal to
	$\partial\mathcal{R}_\varepsilon$.
	On the lateral sides of the thin branches  we impose
	\begin{equation}\label{coupl-1+}
		\left\{
		\begin{array}{rcll}
			D_i \nabla_x u^{(i)}_{\varepsilon} \cdot \boldsymbol{\nu}_\varepsilon
			&=& \Psi^{(i,j)}\big(u^{(i)}_{\varepsilon}, w_{\varepsilon}, y^{(j)}_1, t\big)
			& \text{on } \Gamma^{(i,j)}_\varepsilon\times(0,T),
			\\[2mm]
			\big(-\varepsilon \nabla_x w_\varepsilon + w_\varepsilon \overrightarrow{V_\varepsilon}\big)
			\cdot \boldsymbol{\nu}_\varepsilon
			&=& \varepsilon^2\,\Phi^{(i,j)}\big(u^{(i)}_{\varepsilon}, w_{\varepsilon}, y^{(j)}_1, t\big)
			& \text{on } \Gamma^{(i,j)}_\varepsilon\times(0,T),
		\end{array}
		\right.
	\end{equation}
	where $y^{(j)}=\mathbb{A}_j x$ (see \eqref{matrix}) and the admissible pairs
	$(i,j)$ are given by
	\begin{equation}\label{indexes}
		i=1:\ j\in\{1,2\},\qquad
		i=2:\ j\in\{2,3\},\qquad
		i=3:\ j\in\{1,3\}.
	\end{equation}
	
	\begin{remark}\label{Rem-1-4}
		Whenever functions or boundary components carry indices $(i,j)$, the indices
		are understood to vary according to rule \eqref{indexes}.
	\end{remark}
	
	The initial condition is
	\begin{equation}\label{p-in}
		U_\varepsilon(x,0)=0.
	\end{equation}
	
	The functions appearing on the right-hand sides of
	\eqref{p-equations}–\eqref{p-in} are assumed to be known and satisfy the
	following conditions.
	\begin{description}
		\item[A1]
		For each $i\in\{1,2,3\}$ the source term $f_i(x,t)$,
		$(x,t)\in\Omega^{(i)}\times[0,T]$, and the reaction term
		$F_i(s,x,t)$, $(s,x,t)\in\mathbb{R}\times\Omega^{(i)}\times[0,T]$,
		belong to $C^2$ and are uniformly bounded together with all their derivatives.
		They have compact support in $\Omega^{(i)}$, independent of $s$ and $t$, and
		satisfy the compatibility conditions
		$F_i|_{t=0}=0$, $\partial_s F_i|_{t=0}=0$, and $f_i|_{t=0}=0$.
		In addition,
		\begin{equation}\label{pos-1}
			\partial_s F_i(s,x,t)\ge 0
			\quad\text{on }\mathbb{R}\times\Omega^{(i)}\times[0,T].
		\end{equation}
		
		\item[A2]
		The boundary data $\{q_i(t)\}_{i=1}^3$ in \eqref{p-outer-bc} are nonnegative,
		$q_i\in C^4([0,T])$, $q_i(0)=q_i'(0)=0$ for $i\in\{2,3\}$, and
		\begin{equation}\label{q_1}
			\frac{d^k q_1}{dt^k}(0)=0
			\quad\text{for } k\in\{0,1,2,3,4\}.
		\end{equation}
		
		\item[A3]
		The functions $\Psi_0(s,\omega,\xi,t)$ and $\Phi_0(s,\omega,\xi,t)$,
		$(s,\omega,\xi,t)\in X_0:=\mathbb{R}\times\mathbb{R}\times\overline{\Xi^{(0)}}\times[0,T]$,
		belong to $C^2(X_0)$ and are uniformly bounded together with all their derivatives.
		They vanish uniformly (in $s,\omega,t$) in neighborhoods of the sets
		$\{\Upsilon^{(i)}_1(\ell_0)\}_{i=1}^3$, and satisfy
		\begin{itemize}
			\item $\Psi_0|_{t=0}=0$ and $\Psi_0\ge 0$;
			\item
			\begin{equation}\label{com-con-Phi}
				\Phi_0|_{t=0}
				=\partial_t\Phi_0|_{t=0}
				=\partial_{tt}^2\Phi_0|_{t=0}
				=\partial_s\Phi_0|_{t=0}
				=\partial_{ss}^2\Phi_0|_{t=0}
				=0.
			\end{equation}
		\end{itemize}
		
		\item[A4]
		For each admissible pair $(i,j)$ (see \eqref{indexes}), the functions
		\[
		\Psi^{(i,j)}(s,\omega,y^{(j)}_1,t),\qquad
		\Phi^{(i,j)}(s,\omega,y^{(j)}_1,t),
		\]
		defined on $X_{i,j}:=\mathbb{R}\times\mathbb{R}\times[0,\ell_i]\times[0,T]$,
		belong to $C^2(X_{i,j})$ and are uniformly bounded together with all their
		derivatives. They vanish in neighborhoods of the endpoints of $[0,\ell_i]$
		uniformly in the other variables, and satisfy:
		\begin{itemize}
			\item
			\begin{equation}\label{pos-2}
				\partial_s\Psi^{(i,j)}(s,\omega,y^{(j)}_1,t)\ge 0
				\quad\text{on } X_{i,j};
			\end{equation}
			\item
			there exists $\delta>0$ such that
			\begin{equation}\label{comp-cond-Psi}
				\Psi^{(i,j)}(\cdot,\cdot,\cdot,t)=0
				\quad\text{for all } t\in[0,\delta];
			\end{equation}
			\item
			$\Phi^{(i,j)}|_{t=0}=0$.
		\end{itemize}
	\end{description}

	To proceed, we define a weak solution to problem \eqref{p-equations}--\eqref{p-in}, will henceforth be called the problem~$\mathbb {P}_\varepsilon.$
	
	\subsubsection*{Weak formulation}	Denote by $\mathfrak{H}^{i, *}_\varepsilon$ the dual space to the Sobolev space 
	$$
	\mathfrak{H}^{(i)}_\varepsilon := \big\{u\in H^1(\Omega^{(i)}_\varepsilon)\colon \ {u}\big|_{\partial\Omega^{(i)}_\varepsilon \cap\,  \partial\Omega} = 0\big\}.
	$$ 
	Similarly, let $\mathfrak{H}^f_\varepsilon :=H^1(\mathcal{R}_\varepsilon)$
	and $\mathfrak{H}^{f,*}_\varepsilon$ its dual.
	The pairings are denoted by
	$\langle\cdot,\cdot\rangle_\varepsilon^{(i)}$ and
	$\langle\cdot,\cdot\rangle_\varepsilon^{f}$.

	\begin{definition}\label{Def-weak-sol}
		A function $U_\varepsilon = \begin{cases}
			u_\varepsilon^{(i)}, & \text{in} \ \Omega_\varepsilon^{(i)}\times(0,T),\   i\in\{1,2,3\},\\
			w_\varepsilon, & \text{in} \ \mathcal{R}_\varepsilon\times(0,T),
		\end{cases}
		$ 
		\ with 
		$$
		u_\varepsilon^{(i)}\in L^2(0,T;\mathfrak{H}^{(i)}_\varepsilon), \quad
		\partial_t u_\varepsilon^{(i)}\in L^2(0,T;\mathfrak{H}^{i,*}_\varepsilon), \quad 
		w_\varepsilon\in L^2(0,T;\mathfrak{H}^f_\varepsilon), \quad \partial_t w_\varepsilon\in L^2(0,T;\mathfrak{H}^{f,*}_\varepsilon),
		$$
		is a \emph{weak solution} to problem $\mathbb{P}_\varepsilon$ if
		the following identities and conditions hold:
		\begin{itemize}
			\item for a.e.\ $t\in(0,T)$, for all $i\in\{1,2,3\}$ and all
			$\phi\in\mathfrak{H}^{(i)}_\varepsilon$,
			\begin{multline}\label{identity-1}
				\langle\partial_t u^{(i)}_\varepsilon, \phi \rangle^{(i)}_\varepsilon 
				\, + \,  D_i\int_{\Omega^{(i)}_\varepsilon}\nabla_x u^{(i)}_\varepsilon \cdot \nabla_x \phi \, dx 
				\, +\,  \int_{\Omega^{(i)}_\varepsilon} F_i(u^{(i)}_\varepsilon, x, t) \, \phi \, dx  
				\, +\,  \int_{\Gamma^{(i,0)}_\varepsilon} \Psi_0\big(u^{(i)}_{\varepsilon}, w_{\varepsilon}, \tfrac{x}{\varepsilon}, t \big) \, \phi \, dl_{x}
				\\
				\sum_j\int_{\Gamma^{(i,j)}_\varepsilon} \Psi^{(i,j)}\big(u^{(i)}_{\varepsilon}, w_{\varepsilon}, \cdot, t \big)
				\, \phi \, dl_{x} = \int_{\Omega^{(i)}_\varepsilon} f_i(x, t) \, \phi \, dx ;
			\end{multline}
			\item for a.e. \ \(t\in(0,T)\) and all
			$\phi \in \mathfrak{H}^{(f)}_\varepsilon$ with
			\(\phi|_{\Upsilon_\varepsilon^{(i)}(\ell_i)}= 0, \ i \in \{1,2,3\},\)  
			\begin{multline}\label{identity-f}
				\langle\partial_t w_\varepsilon, \phi \rangle^f_\varepsilon 
				\, + \,  \varepsilon \int_{\mathcal{R}_\varepsilon} \nabla_x w_\varepsilon \cdot \nabla_x\phi \, dx \,-\, \int_{\mathcal{R}_\varepsilon}  w_\varepsilon \, \overrightarrow{V_\varepsilon}\cdot \nabla_x\phi \, dx = 
				\\
				- 	\varepsilon \sum_{i=1}^{3}\int_{\Gamma^{(i,0)}_\varepsilon} \Phi_0\big(u^{(i)}_{\varepsilon}, w_{\varepsilon},\tfrac{x}{\varepsilon}, t \big) \, \phi \, dl_{x}
				- 	\varepsilon^2 \sum_{i=1}^{3} \sum_j\int_{\Gamma^{(i,j)}_\varepsilon} \Phi^{(i,j)}\big(u^{(i)}_{\varepsilon}, w_{\varepsilon}, \cdot,  t \big) \, \phi \, dl_{x};
			\end{multline}
			\item and  
			$$
			U_\varepsilon|_{t=0}= 0, \qquad
			w_\varepsilon|_{\Upsilon_\varepsilon^{(i)}(\ell_i)}= q_i(t), \ \ i \in \{1,2,3\}.
			$$ 
		\end{itemize}
	\end{definition}
	
	\begin{remark}
		The summation over $j$ in \eqref{identity-1} and \eqref{identity-f}
		follows the rule specified in \eqref{indexes}. 
	\end{remark}
	
	\begin{remark}
		It is known that if 
		$u^{(i)}_\varepsilon \in L^2(0,T; \mathfrak{H}^{(i)}_\varepsilon)$ and $\partial_t u^{(i)}_\varepsilon \in L^2(0,T; \mathfrak{H}^{i, *}_\varepsilon)$, then the function $u^{(i)}_\varepsilon \in C([0,T]; L^2(\Omega^{(i)}_\varepsilon)).$ Therefore,
		equality $u^{(i)}_\varepsilon|_{t=0}= 0$ is meaningful. The same holds for $w_\varepsilon.$
	\end{remark}
	
	\begin{remark}\label{rem-1-7}
		As will be seen later, the smoothness of the advection field $\overrightarrow{V_\varepsilon}$ and the nonlinear functions is required in order to determine higher-order terms in the asymptotic expansion of the solution to problem~$\mathbb{P}_\varepsilon.$ 
		In particular, this regularity guarantees Lipschitz continuity with respect to the variables 
		$s$ and $w$ with constants independent of the arguments; the compact-support assumptions model localized reactions (e.g., solute penetration from the solid into the fracture); relations~\eqref{comp-cond-Psi} mean that chemical activity in the fracture begins only after a delay, which is natural in applications. All these conditions are also important to enforce the compatibility conditions  in many problems, which determine the coefficients in the asymptotic approximation, and to establish the regularity of solutions to the limit problems in the bulk regions, see Remark \ref{rem-2-2} and  Proposition~\ref{prop-2-1}. 
	\end{remark}

	The existence and uniqueness of a weak solution to problem $\mathbb{P}_\varepsilon$ for each fixed $\varepsilon>0,$ under the above assumptions on the nonlinear terms, is standard; it may be obtained, for example, by a Galerkin approximation (cf.\ \cite[Sect.~3]{Mel-Roh_Non-Diff-2024}) or by Sch\"afer's fixed point theorem (cf.\ \cite[Prop.~1]{Gahn-Radu-2018}).
	
	\medskip
	
	Our goal is to study the asymptotic behavior of the solution $U_\varepsilon$ as
	$\varepsilon\to0$, i.e., when the thin graph-like fracture $\mathcal{R}_\varepsilon$ shrinks to the one-dimensional graph
	$\mathcal{I}$ (see~\eqref{graph}) and 
	the domain $\Omega$ becomes the union of three bulk regions $\{\Omega^{(i)}\}_{i=1}^3$
	separated by the graph. As part of this analysis, we
	\begin{itemize}
		\item 
		derive effective recurrent coupled limit problems: a first-order hyperbolic problem posed on the graph and, simultaneously, limit problems in the bulk domains whose boundary conditions explicitly involve the graph solution;
		\item 
		establish the well-posedness of these limit problems, including regularity of their solutions;
		\item 
		determine the subsequent terms of the asymptotic expansion to capture the influence of microscale features such as the node geometry, physical processes on and inside the node, as well as the branch structure of the fracture, on the macroscopic behavior of the diffusion–reaction–advection model;
		\item 
		construct an asymptotic approximation of the solution to problem $\mathbb{P}_\varepsilon$ and prove the corresponding error estimates.
	\end{itemize}
	
	\begin{remark}
		The present work focuses on a simplified fracture geometry and problem
		formulation in order to isolate and analyze the processes occurring near and
		within a small junction node. More general geometries and formulations can be
		treated by the same multiscale methodology and are discussed in Section~\ref{Sect-5}, but their analysis would require additional technical constructions
		and is therefore not pursued here.
	\end{remark}


	\section{Formal asymptotic analysis}\label{Sect-3}
	
	In each bulk domain $\Omega_\varepsilon^{(i)}$ $(i\in\{1,2,3\})$ we propose the following ansatz for the solution:
	\begin{equation}\label{exp-1}
		u^{(i)}_\varepsilon(x,t)
		\approx u^{(i)}_0(x,t) + \varepsilon\,u^{(i)}_1(x,t) + \ldots
	\end{equation}
	Since $\Omega_\varepsilon^{(i)}$ transforms into $\Omega^{(i)}$ as $\varepsilon\to0$, and using Taylor’s formula for the functions $F_i$ and $\Psi^{(i,j)}$, we obtain the relations satisfied by the leading term of~\eqref{exp-1}:
	\begin{equation}\label{relations+}
		\left\{
		\begin{array}{rcll}
			\partial_t u^{(i)}_0 - D_i \Delta_x u^{(i)}_0
			+ F_i(u^{(i)}_0,x,t) &=& f_i(x,t)
			& \text{in }\ \Omega^{(i)}\times(0,T),
			\\[2mm]
			u^{(i)}_0 &=& 0
			& \text{on } \ \ \mathcal{B}_i\times(0,T),
			\\[2mm]
			D_i\nabla_x u^{(i)}_0\cdot\boldsymbol{\nu}_0
			&=& \Psi^{(i,i)}\big(u^{(i)}_0,w^{(i)}_0,y^{(i)}_1,t\big)
			& \text{on } \ \mathcal{I}_i\times(0,T),
			\\[2mm]
			D_i\nabla_x u^{(i)}_0\cdot\boldsymbol{\nu}_0
			&=& \Psi^{(i,j)}\big(u^{(i)}_0,w^{(j)}_0,y^{(j)}_1,t\big)
			& \text{on } \ \mathcal{I}_j\times(0,T),
			\\[2mm]
			u^{(i)}_0(x,0) &=& 0 & \text{in} \ \ \Omega^{(i)}.
		\end{array}
		\right.
	\end{equation}
	Here $\mathcal{I}_i$ and $\mathcal{I}_j$ are the edges of the graph
	$\mathcal{I}$ lying on $\partial\Omega^{(i)}$ (see Remark~\ref{Rem-1-4}), 
	$\mathcal{B}_i := \partial\Omega^{(i)}\setminus(\mathcal{I}_i\cup\mathcal{I}_j),$ 
	$\boldsymbol{\nu}_0$ denotes
	the inward unit normal to $\partial\Omega^{(i)}$, and $w^{(i)}_0$ is the leading terms in the asymptotic expansion of
	$w_\varepsilon$ on the edge $\mathcal{I}_i.$ The coefficient $u^{(i)}_1$ is determined in \S~\ref{corner-2-4}.
	
	
	\subsection{Analysis in the thin fracture $\mathcal{R}_\varepsilon$}\label{Par-3-1}
	
	We begin with a simple observation.  
	Let \(u:\mathbb{R}^2\to\mathbb{R}\) be a scalar field and
	\(\bm{\mathcal V}:\mathbb{R}^2\to\mathbb{R}^2\) a vector field.  
	For each \(i\in\{1,2,3\}\) define the transformed functions
	\[
	u^{(i)}(y):=u(\mathbb{A}_i^{-1}y),\qquad
	\bm{\mathcal V}^{(i)}(y):=\mathbb{A}_i\,\bm{\mathcal V}(\mathbb{A}_i^{-1}y).
	\]

	Since the matrix \(\mathbb{A}_i\) is orthogonal (see~\eqref{matrix}), the
	standard differential operators are invariant under this orthonormal change of
	coordinates. In particular,
	\begin{equation}\label{lap_1}
		\Delta_x u(x)
		=\frac{\partial^2 u}{\partial x_1^2}
		+\frac{\partial^2 u}{\partial x_2^2}
		=\frac{\partial^2 u^{(i)}}{\partial (y_1^{(i)})^2}
		+\frac{\partial^2 u^{(i)}}{\partial (y_2^{(i)})^2}
		=\Delta_{y^{(i)}}u^{(i)}(y^{(i)}),
	\end{equation}
	\begin{equation}\label{div_1}
		\mathrm{div}_x\bm{\mathcal V}(x)
		=\frac{\partial v_1}{\partial x_1}
		+\frac{\partial v_2}{\partial x_2}
		=\frac{\partial v^{(i)}_1}{\partial y_1^{(i)}}
		+\frac{\partial v^{(i)}_2}{\partial y_2^{(i)}}
		=\mathrm{div}_{y^{(i)}}\bm{\mathcal V}^{(i)}(y^{(i)}),
	\end{equation}
	and for any vector \(\boldsymbol{\eta}=(\eta_1,\eta_2)^T\),
	\begin{equation}\label{grad_1}
		\boldsymbol{\eta}\cdot\nabla_x u(x)
		=\eta_1\frac{\partial u}{\partial x_1}
		+\eta_2\frac{\partial u}{\partial x_2}
		=\eta_1^{(i)}\frac{\partial u^{(i)}}{\partial y_1^{(i)}}
		+\eta_2^{(i)}\frac{\partial u^{(i)}}{\partial y_2^{(i)}}
		=\boldsymbol{\eta}^{(i)}\cdot\nabla_{y^{(i)}}u^{(i)}(y^{(i)}),
	\end{equation}
	where \(\boldsymbol{\eta}^{(i)}:=\mathbb{A}_i\boldsymbol{\eta}\).
	
	Using these identities, the differential equation for $w_\varepsilon$ in the
	thin branch $\mathcal{R}_\varepsilon^{(i)}$ takes the form
	\begin{equation}\label{eq-thin-rect}
		\partial_t w^{(i)}_\varepsilon
		+\mathrm{div}_{y^{(i)}}\!\big(w^{(i)}_\varepsilon\,\overrightarrow{V_\varepsilon}^{(i)}\big)
		=\varepsilon\,\Delta_{y^{(i)}} w^{(i)}_\varepsilon,
		\qquad
		\text{in }\mathcal{R}_\varepsilon^{(i)}\times(0,T),
	\end{equation}
	where $\overrightarrow{V_\varepsilon}^{(i)}$ is defined in~\eqref{conv-1}.
	
	Since the analysis in each thin branch $\mathcal{R}_\varepsilon^{(i)}$ is analogous, we restrict ourselves, for definiteness, to $\mathcal{R}_\varepsilon^{(1)}$.  
	The boundary conditions on the sides $\Gamma_\varepsilon^{(1,1)}$ and $\Gamma_\varepsilon^{(3,1)}$ of $\mathcal{R}_\varepsilon^{(1)}$ can be rewritten as
	\begin{gather}\label{lbc-1}
		-\Big(-\partial_{y^{(1)}_2}w^{(1)}_\varepsilon + w^{(1)}_\varepsilon v^{(1)}_2\Big)
		= \varepsilon\,
		\Phi^{(1,1)}\!\left(u^{(1)}_\varepsilon,w^{(1)}_\varepsilon,
		y^{(1)}_1,t\right)
		\quad\text{on }\Gamma_\varepsilon^{(1,1)}\times(0,T),
		\\[4pt]\label{lbc-2}
		\Big(-\partial_{y^{(1)}_2}w^{(1)}_\varepsilon + w^{(1)}_\varepsilon v^{(1)}_2\Big)
		= \varepsilon\,
		\Phi^{(3,1)}\!\left(u^{(3)}_\varepsilon,w^{(1)}_\varepsilon,
		y^{(1)}_1,t\right)
		\quad\text{on }\Gamma_\varepsilon^{(3,1)}\times(0,T).
	\end{gather}
	
	We use the two-scaled ansatz
	\begin{equation}\label{regul-1+}
		\mathfrak{W}^{(1)}_\varepsilon =
		w_0^{(1)}(y^{(1)}_1,t)
		+ \varepsilon\Big(w_1^{(1)}(y^{(1)}_1,t)
		+ z_1^{(1)}\big(y^{(1)}_1,\tfrac{y^{(1)}_2}{\varepsilon},t\big)\Big)
		+ \varepsilon^2 z_2^{(1)}\big(y^{(1)}_1,\tfrac{y^{(1)}_2}{\varepsilon},t\big)
		\quad\text{in }\mathcal{R}^{(1)}_\varepsilon.
	\end{equation}
	Substituting this into \eqref{eq-thin-rect} for $i=1$, collecting terms of order $\varepsilon^0$ and $\varepsilon$, and equating them to zero, we obtain differential equations in $\xi_2=\tfrac{y^{(1)}_2}{\varepsilon}\in(-h_1,h_1)$ for the coefficients $z_1^{(1)}$ and $z_2^{(1)}$:
	\begin{equation}\label{eq_1}
		\partial_{\xi_2}^2 z^{(1)}_1
		= \partial_t w^{(1)}_0
		+ \partial_{y^{(1)}_1}\!\big(v_1^{(1)} w^{(1)}_0\big)
		+ w^{(1)}_0\,\partial_{\xi_2}v_2^{(1)},
	\end{equation}
	\begin{equation}\label{eq-2}
		\partial_{\xi_2}^2 z^{(1)}_2
		= \partial_t w^{(1)}_1
		+ \partial_{y^{(1)}_1}\!\big(v_1^{(1)} w^{(1)}_1\big)
		+ \partial_t z^{(1)}_1
		+ \partial_{y^{(1)}_1}\!\big(v_1^{(1)} z^{(1)}_1\big)
		+ \partial_{\xi_2}\!\big(v_2^{(1)}(w^{(1)}_1 + z^{(1)}_1)\big)
		- \partial_{y^{(1)}_1}^2 w^{(1)}_0.
	\end{equation}
	
	Next, substituting the ansatz \eqref{regul-1+} into the boundary conditions \eqref{lbc-1}–\eqref{lbc-2} and using Taylor’s formula for $\Phi^{(1,1)}$ and $\Phi^{(3,1)}$, we obtain
	\begin{gather}\label{bc-1}
		-\big(-\partial_{\xi_2}z^{(1)}_1 + v_2^{(1)} w^{(1)}_0\big)=0,
		\qquad \xi_2=h_1,
		\\[2mm]\label{bc-2}
		-\partial_{\xi_2}z^{(1)}_1 + v_2^{(1)} w^{(1)}_0 = 0,
		\qquad \xi_2=-h_1,
	\end{gather}
	\begin{gather}\label{bc-3}
		-\big(-\partial_{\xi_2}z^{(1)}_2 + v_2^{(1)}(w^{(1)}_1 + z^{(1)}_1)\big)
		= \Phi^{(1,1)}(u^{(1)}_0,w^{(1)}_0,y^{(1)}_1,t),
		\qquad \xi_2=h_1,
		\\[2mm]\label{bc-4}
		-\partial_{\xi_2}z^{(1)}_2 + v_2^{(1)}(w^{(1)}_1 + z^{(1)}_1)
		= \Phi^{(3,1)}(u^{(3)}_0,w^{(1)}_0,y^{(1)}_1,t),
		\qquad \xi_2=-h_1.
	\end{gather}
	
	Thus, for each fixed $(y^{(1)}_1,t)$ we obtain linear Neumann problems on $(-h_1,h_1)$ for $z_1^{(1)}$ and $z_2^{(1)}$, namely \eqref{eq_1}, \eqref{bc-1}, \eqref{bc-2} and \eqref{eq-2}, \eqref{bc-3}, \eqref{bc-4}.  
	To ensure uniqueness, each problem is supplemented with the normalization
	\begin{equation}\label{uniq_1}
		\int_{-h_1}^{h_1} z_k^{(1)}(y^{(1)}_1,\xi_2,t)\,d\xi_2 = 0,
		\qquad k\in\{1,2\}.
	\end{equation}
	
	The solvability conditions for these Neumann problems yield the following
	equations for the coefficients $w^{(1)}_0$ and $w^{(1)}_1$:
	\begin{equation}\label{lim_0}
		\partial_t w^{(1)}_0
		+ \partial_{y^{(1)}_1}\!\big(v_1^{(1)} w^{(1)}_0\big)
		= 0,
		\qquad (y^{(1)}_1,t)\in\mathcal{I}_1\times(0,T),
	\end{equation}
	\begin{equation}\label{lim_1}
		\partial_t w^{(1)}_1
		+ \partial_{y^{(1)}_1}\!\big(v_1^{(1)} w^{(1)}_1\big)
		= \partial_{y^{(1)}_1}^2 w^{(1)}_0 + \widehat{\Phi}^{(1)},
		\qquad (y^{(1)}_1,t)\in\mathcal{I}_1\times(0,T),
	\end{equation}
	where
	\begin{equation}\label{fun_1}
		\widehat{\Phi}^{(1)}(y^{(1)}_1,t)
		:= \frac{1}{2h_1}\Big(
		\Phi^{(1,1)}(u^{(1)}_0,w^{(1)}_0,y^{(1)}_1,t)
		+ \Phi^{(3,1)}(u^{(3)}_0,w^{(1)}_0,y^{(1)}_1,t)
		\Big).
	\end{equation}
	
	Equations \eqref{lim_0}–\eqref{lim_1} are linear first‑order hyperbolic equations on the edge $\mathcal{I}_1$.  
	Analogous equations are obtained on the edges $\mathcal{I}_2$ and $\mathcal{I}_3$ of the graph $\mathcal{I}$.
	
	
	\subsubsection{Inner node-layer part of the approximation}\label{subsec_Inner_part}
	
	To derive the gluing conditions for the coefficients
	$\{w_0^{(i)}\}_{i=1}^3$ and $\{w_1^{(i)}\}_{i=1}^3$ at the origin (the vertex of the graph $\mathcal{I}$),
	we introduce, following the approach of
	\cite{Mel-Roh_AA-2024,Mel-Roh_JMAA-2024}, the inner node-layer ansatz
	\begin{equation}\label{junc}
		\mathfrak{N}_\varepsilon(x,t)
		= N_0\!\left(\frac{x}{\varepsilon},t\right)
		+ \varepsilon\,N_1\!\left(\frac{x}{\varepsilon},t\right)
	\end{equation}
	in a neighborhood of the node $\mathcal{R}^{(0)}_\varepsilon$.
	
	Passing to the scaled variables
	$
	\xi=\frac{x}{\varepsilon},
	$
	and then letting $\varepsilon\to0$, the thin graph-like junction
	$\mathcal{R}_\varepsilon$ transforms into the unbounded domain
	\[
	\Xi=\operatorname{int}\Big(\bigcup_{i=0}^3\overline{\Xi^{(i)}}\Big),
	\]
	where $\Xi^{(0)}$ is the node domain (see~\eqref{node}) and
	\[
	\Xi^{(i)}
	=\big\{\xi^{(i)}=(\xi^{(i)}_1,\xi^{(i)}_2)\in\mathbb{R}^2:\ 
	\ell_0<\xi^{(i)}_1<+\infty,\ |\xi^{(i)}_2|<h_i\big\},
	\qquad i\in\{1,2,3\},
	\]
	are semi-infinite strips written in the scaled coordinates
	$\xi^{(i)}=y^{(i)}/\varepsilon$.
	
	We denote the lateral sides of the strips by  
	$\Gamma^{(1,1)}$ and $\Gamma^{(3,1)}$ for $\Xi^{(1)}$,
	$\Gamma^{(1,2)}$ and $\Gamma^{(2,2)}$ for $\Xi^{(2)}$, and
	$\Gamma^{(2,3)}$ and $\Gamma^{(3,3)}$ for $\Xi^{(3)}$.
	The remaining parts of the boundary of $\Xi$ form the node boundary:
	\[
	\partial\Xi\setminus\Big(\bigcup_{i=1}^3\bigcup_j\Gamma^{(i,j)}\Big)
	=\Gamma^{(1,0)}\cup\Gamma^{(2,0)}\cup\Gamma^{(3,0)}.
	\]
	
	Substituting  \eqref{junc} into the differential equation
	\eqref{p-equations+} of problem $\mathbb{P}_\varepsilon$ and into the boundary
	conditions \eqref{coup-1}–\eqref{coupl-1+}, and collecting terms of equal
	powers of $\varepsilon$, we obtain the following problems:
	\begin{equation}\label{N_0_prob}
		\left\{
		\begin{array}{rcll}
			- \Delta_\xi N_0(\xi,t) + \overrightarrow{V}(\xi)\cdot\nabla_\xi N_0(\xi,t)
			&=& 0, & \xi\in\Xi^{(0)},
			\\[2mm]
			\partial_{\boldsymbol{\nu}_\xi} N_0(\xi,t)
			&=& 0, & \xi\in\Gamma^{(1,0)}\cup\Gamma^{(2,0)}\cup\Gamma^{(3,0)},
			\\[2mm]
			- \Delta_{\xi^{(i)}} N_0(\xi^{(i)},t)
			+ \mathrm{v}_i\,\partial_{\xi^{(i)}_1}N_0(\xi^{(i)},t)
			&=& 0, & \xi^{(i)}\in\Xi^{(i)},
			\\[2mm]
			\partial_{\xi^{(i)}_2}N_0(\xi^{(i)},t)\big|_{\xi^{(i)}_2=\pm h_i}
			&=& 0, & \xi^{(i)}\in\Gamma^{(i,j)},
			\\[2mm]
			N_0(\xi^{(i)},t)\sim w^{(i)}_0(0,t)
			&\text{as}& \xi^{(i)}_1\to+\infty, & \xi^{(i)}\in\Xi^{(i)},\ i\in\{1,2,3\},
		\end{array}
		\right.
	\end{equation}
	and
	\begin{equation}\label{N_1_prob}
		\left\{
		\begin{array}{rcll}
			- \Delta_\xi N_1(\xi,t) + \overrightarrow{V}(\xi)\cdot\nabla_\xi N_1(\xi,t)
			&=& -\partial_t N_0(\xi,t), & \xi\in\Xi^{(0)},
			\\[2mm]
			- \partial_{\boldsymbol{\nu}_\xi}N_1(\xi,t)
			&=& \Phi_0\big(u^{(i)}_0(0,t),N_0,\xi,t\big), & \xi\in\Gamma^{(i,0)},
			\\[2mm]
			- \Delta_{\xi^{(i)}} N_1(\xi^{(i)},t)
			+ \mathrm{v}_i\,\partial_{\xi^{(i)}_1}N_1(\xi^{(i)},t)
			&=& -\partial_t N_0(\xi^{(i)},t), & \xi^{(i)}\in\Xi^{(i)},
			\\[2mm]
			\partial_{\xi^{(i)}_2}N_1(\xi^{(i)},t)\big|_{\xi^{(i)}_2=\pm h_i}
			&=& 0, & \xi^{(i)}\in\Gamma^{(i,j)},
			\\[2mm]
			N_1(\xi^{(i)},t)\sim w^{(i)}_1(0,t)+\Theta^{(i)}_1(\xi^{(i)},t)
			&\text{as}& \xi^{(i)}_1\to+\infty, & \xi^{(i)}\in\Xi^{(i)},\ i\in\{1,2,3\}.
		\end{array}
		\right.
	\end{equation}	
	Here
	\[
	\overrightarrow{V}(\xi)
	=\overrightarrow{V}^{(0)}_\varepsilon(x)\big|_{x=\varepsilon\xi}
	\quad\text{in }\Xi^{(0)},\qquad
	\overrightarrow{V}(\xi^{(i)})=(\mathrm{v}_i,0)
	\quad\text{in }\Xi^{(i)},
	\]
	and
	\begin{equation}\label{Psi_1}
		\Theta^{(i)}_1(\xi^{(i)}_1,t)
		= \xi^{(i)}_1\,\frac{\partial w_0^{(i)}}{\partial y^{(i)}_1}(0,t),
	\end{equation}
	for $i\in\{1,2,3\}$.  
	The function $\Theta^{(i)}_1$ arises from matching with the regular expansion \eqref{regul-1+}.  
	Note that the variable $t$ enters only as a parameter in the steady advection–diffusion problems \eqref{N_0_prob} and \eqref{N_1_prob}.
	
	In each semi-strip $\Xi^{(i)}$ we look for solutions in the form
	\begin{equation}\label{new-solution_k}
		N_k(\xi^{(i)},t)
		= \big(w^{(i)}_k(0,t)+\Theta^{(i)}_k(\xi^{(i)}_1,t)\big)\,\chi_{\ell_0}(\xi^{(i)}_1)
		+ \widetilde{N}_k(\xi^{(i)},t),
		\qquad k\in\{0,1\},
	\end{equation}
	where $\Theta^{(i)}_0\equiv0$, and $\chi_{\ell_0}\in C^\infty(\mathbb{R})$ is a smooth cut-off function such that  
	\[
	0\le\chi_{\ell_0}\le1,\qquad
	\chi_{\ell_0}(s)=0\ \text{for }s\le2\ell_0,\qquad
	\chi_{\ell_0}(s)=1\ \text{for }s\ge3\ell_0.
	\]
	
	Consequently, the remainders combine into a solution on the whole domain $\Xi$, again denoted by $\widetilde{N}_k$, which must satisfy
	\begin{equation}\label{N_k_prob}
		\left\{
		\begin{array}{rcll}
			- \Delta_\xi \widetilde{N}_k + \overrightarrow{V}(\xi)\cdot\nabla_\xi \widetilde{N}_k
			&=& \mathfrak{J}^{(0)}_k, & \xi\in\Xi^{(0)},
			\\[2mm]
			-\partial_{\boldsymbol{\nu}_\xi}\widetilde{N}_k
			&=& \mathfrak{F}^{(i)}_k, & \xi\in\Gamma^{(i,0)},
			\\[2mm]
			- \Delta_{\xi^{(i)}}\widetilde{N}_k
			+ \mathrm{v}_i\,\partial_{\xi^{(i)}_1}\widetilde{N}_k
			&=& \mathfrak{G}^{(i)}_k, & \xi^{(i)}\in\Xi^{(i)},
			\\[2mm]
			\partial_{\xi^{(i)}_2}\widetilde{N}_k\big|_{\xi^{(i)}_2=\pm h_i}
			&=& 0, & \xi^{(i)}\in\Gamma^{(i,j)},
			\\[2mm]
			\widetilde{N}_k(\xi^{(i)},t)\longrightarrow0
			&\text{as}& \xi^{(i)}_1\to+\infty, & \xi^{(i)}\in\Xi^{(i)},\ i\in\{1,2,3\}.
		\end{array}
		\right.
	\end{equation}
	Here
	\[
	\mathfrak{J}^{(0)}_0\equiv0,\qquad
	\mathfrak{J}^{(0)}_1=-\partial_t N_0,
	\qquad
	\mathfrak{F}^{(i)}_0\equiv0,\qquad
	\mathfrak{F}^{(i)}_1=\Phi_0\big(u^{(i)}_0(0,t),N_0,\xi,t\big),
	\]
	\[
	\mathfrak{G}^{(i)}_0
	= w^{(i)}_0(0,t)\,\chi''_{\ell_0}(\xi^{(i)}_1)
	- \mathrm{v}_i\,w^{(i)}_0(0,t)\,\chi'_{\ell_0}(\xi^{(i)}_1),
	\]
	and
	\begin{align*}
		\mathfrak{G}^{(i)}_1
		= & -\partial_t N_0
		- \mathrm{v}_i\big(\Theta^{(i)}_1(\xi^{(i)}_1,t)\chi_{\ell_0}(\xi^{(i)}_1)\big)'
		\\
		& + w^{(i)}_1(0,t)\,\chi''_{\ell_0}(\xi^{(i)}_1)
		- \mathrm{v}_i\,w^{(i)}_1(0,t)\,\chi'_{\ell_0}(\xi^{(i)}_1)
		+ \big(\Theta^{(i)}_1(\xi^{(i)}_1,t)\chi_{\ell_0}(\xi^{(i)}_1)\big)''.
	\end{align*}
	
	We seek $\widetilde{N}_k$ in the weighted space $\mathcal{H}_\beta$ of
	exponentially decaying functions from $H^1(\Xi)$, equipped with the norm
	\[
	\|u\|_{\beta}
	:=\Bigg(
	\int_{\Xi^{(0)}}\big(|\nabla_\xi u|^2+|u|^2\big)\,d\xi
	+\sum_{i=1}^3\int_{\Xi^{(i)}}\varrho_\beta(\xi^{(i)})
	\big(|\nabla_{\xi^{(i)}}u|^2+|u|^2\big)\,d\xi^{(i)}
	\Bigg)^{1/2},
	\]
	where $\beta>0$ and $\varrho_\beta$ is a smooth positive function such that
	\[
	\varrho_\beta =
	\begin{cases}
		1, & \xi\in\Xi^{(0)},
		\\[2pt]
		e^{\beta\xi^{(i)}_1}, & \xi^{(i)}_1\ge2\ell_0,\ \xi^{(i)}\in\Xi^{(i)},\ i\in\{1,2,3\}.
	\end{cases}
	\]
	
	As in \cite[Lemma 3.1]{Mel-Kle-2022}, one shows that if
	$\mathfrak{J}_k^{(0)}\in L^2(\Xi^{(0)})$, $\mathfrak{F}_k^{(i)}\in L^2(\Gamma^{(i,0)})$, and
	\[
	\int_{\Xi^{(i)}} e^{\beta\xi^{(i)}_1}\big(\mathfrak{G}^{(i)}_k\big)^2\,d\xi^{(i)}<+\infty,
	\qquad i\in\{1,2,3\},
	\]
	for some $\beta>0$, then there exists $\beta_0>0$ such that problem \eqref{N_k_prob}
	has a unique solution in $H_{\beta_0}$ if and only if the solvability condition
	\begin{equation}\label{solvability-cond}
		\int_{\Xi^{(0)}}\mathfrak{J}^{(0)}_k\,d\xi
		+ \sum_{i=1}^3\int_{\Xi^{(i)}}\mathfrak{G}^{(i)}_k\,d\xi^{(i)}
		= \sum_{i=1}^3\int_{\Gamma^{(i,0)}}\mathfrak{F}^{(i)}_k\,dl_\xi
	\end{equation}
	is satisfied.
	
	The solvability condition for \eqref{N_k_prob} with $k=0$ takes the form 
	\begin{equation}\label{solv-cond}
		\sum_{i=1}^{3}\int_{\Xi^{(i)}} \mathfrak{G}^{(i)}_0 \, d\xi^{(i)} = 0 \ \ \Longleftrightarrow \ \
		2 \sum_{i=1}^{3} \mathrm{v}_i  \, h_i \, w^{(i)}_0(0,t)   = 0.
	\end{equation}
	This equality provides the coupling condition at the vertex of the graph $\mathcal{I}$ for the leading terms $\{w^{(i)}_0\}_{i=1}^3$ of the regular asymptotics in the thin branch $\mathcal{R}^{(i)}_\varepsilon,$ $i\in\{1,2,3\},$ respectively. 
	
	\subsubsection{Determination of the leading-order terms} Thus, the functions $\{w^{(i)}_0\}_{i=1}^3$ satisfy  the following first-order hyperbolic problem on the graph $\mathcal{I}$:
	\begin{equation}\label{limit_prob}
		\left\{\begin{array}{rcll}
			\partial_t{w}^{(i)}_0(y_1^{(i)},t) + \partial_{y_1^{(i)}}\big( v_1^{(i)}(y_1^{(i)})\,  w^{(i)}_0(y_1^{(i)},t) \big) &=& 0,& (y_1^{(i)}, t) \in \mathcal{I}_i \times (0, T), \ \ i \in \{1,2,3\},
			\\[2pt]
			\sum_{i=1}^{3}  \mathrm{v}_i \, h_i\,  w_0^{(i)}(0, t) & = & 0,& t \in (0, T),
			\\[2mm]
			w_0^{(1)}(\ell_1,  t) & =  & q_1(t), & t \in [0, T], 
			\\[2mm]
			w_0^{(i)}(y_1^{(i)},  0) & = & 0, & y_1^{(i)} \in [0, \ell_i],  \ \ i \in \{1,2,3\}.
		\end{array}\right.
	\end{equation}
	
	Problems of this type were studied in \cite[Sect.~3]{Mel-Roh_AA-2024} and 
	\cite[\S 3.3]{Mel-Roh_JMAA-2024}.  
	Under our assumptions on the advection field and condition~\eqref{cond_1}, 
	problem~\eqref{limit_prob} admits a unique solution that is continuous at the vertex,
	that is, $w_0^{(1)}(0, t)=w_0^{(2)}(0, t)=w_0^{(3)}(0, t).$ 
	
	Moreover, explicit formulas for $w^{(i)}_0$ can be obtained by integrating the 
	corresponding mixed problems along characteristic curves.
	For the first edge $\mathcal{I}_1$, we solve the mixed problem
	\begin{equation}\label{limit_prob-1a}
		\left\{
		\begin{array}{l}
			\partial_t w^{(1)}_0(y_1^{(1)},t)
			+ \partial_{y_1^{(1)}}\!\big(v_1^{(1)}(y_1^{(1)})\, w^{(1)}_0(y_1^{(1)},t)\big)
			= 0,
			\quad (y_1^{(1)},t)\in\mathcal{I}_1\times(0,T),
			\\[2mm]
			w^{(1)}_0(\ell_1,t)=q_1(t), \ \ t \in [0,T], \qquad
			w^{(1)}_0(y_1^{(1)},0)=0,\ \ y_1^{(1)}\in[0,\ell_1].
		\end{array}
		\right.
	\end{equation}
	Integrating along characteristics and incorporating the boundary and initial data 
	yields an explicit representation of $w^{(1)}_0$ on $\mathcal{I}_1\times[0,T]$.
	For the remaining edges $\mathcal{I}_2$ and $\mathcal{I}_3$, we similarly solve
	\begin{equation}\label{limit_prob-1b}
		\left\{
		\begin{array}{l}
			\partial_t w^{(i)}_0(y_1^{(i)},t)
			+ \partial_{y_1^{(i)}}\!\big(v_1^{(i)}(y_1^{(i)})\, w^{(i)}_0(y_1^{(i)},t)\big)
			= 0,
			\quad (y_1^{(i)},t)\in\mathcal{I}_i\times(0,T),
			\\[2mm]
			w^{(i)}_0(0,t)=w^{(1)}_0(0,t), \ \ t \in [0,T],\qquad
			w^{(i)}_0(y_1^{(i)},0)=0,\ \ y_1^{(i)}\in[0,\ell_i]; \quad i \in\{2,3\}.
		\end{array}
		\right.
	\end{equation}
	The resulting formulas are lengthy and therefore omitted here; see \cite[Sect.~3]{Mel-Roh_AA-2024} for details.
	
	Having the solution to problem \eqref{limit_prob} implies the following:
	\begin{enumerate}
		\item 
		the solvability condition~\eqref{solv-cond} is satisfied, 
		and continuity at the vertex implies $N_0 \equiv w_0^{(1)}(0, t);$ 
		\item 
		the leading term $u_0^{(i)}$ of the bulk expansion~\eqref{exp-1} in the
		bulk domain $\Omega^{(i)}$ is uniquely determined as the weak solution of the
		corresponding initial--boundary value problem in $\Omega^{(i)}$ (for
		$u_0^{(1)}$, see problem~\eqref{relations+});
		\item 
		the functions $\{z_1^{(i)}\}_{i=1}^3$ in the regular expansion in  $\mathcal{R}^{(i)}_\varepsilon$ are uniquely determined as
		solutions of the associated Neumann problems on the rescaled cross-sections.
		For example, the Neumann problem \eqref{eq_1}, \eqref{bc-1}, \eqref{bc-2}
		takes the form
		\begin{equation}\label{prob-z-1}
			\left\{
			\begin{array}{l}
				\partial_{\xi_2^2}^2 z^{(1)}_1\big(y^{(1)}_1,\xi_2,t\big)
				= w^{(1)}_0(y^{(1)}_1,t)\,\partial_{\xi_2} v_2^{(1)}(y^{(1)}_1,\xi_2),
				\qquad \xi_2\in(-h_1,h_1),
				\\[2mm]
				-\partial_{\xi_2} z^{(1)}_1(y^{(1)}_1,\xi_2,t)
				+ v_2^{(1)}(y^{(1)}_1,\xi_2)\, w^{(1)}_0(y^{(1)}_1,t)
				=0,
				\qquad \xi_2=\pm h_1,
				\\[2mm]
				\displaystyle \int_{-h_1}^{h_1} z_1^{(1)}(y^{(1)}_1,\xi_2,t)\,d\xi_2 = 0.
			\end{array}
			\right.
		\end{equation}
	\end{enumerate}
	
	Thus, the leading terms of the asymptotic expansions in all parts of the
	domain $\Omega$ are completely determined.

	\subsubsection{Determination of the second-order terms of the asymptotics}
	We now turn to the determination of the second-order coefficients $\{w^{(i)}_1\}_{i=1}^3$ and the corresponding node-layer corrections.  
	To this end, we first rewrite the right-hand sides $\{\mathfrak{G}^{(i)}_1\}_{i=1}^3$ in problem~\eqref{N_k_prob} (with $k=1$) using the already determined functions $\{w^{(i)}_0\}_{i=1}^3$:
	\begin{align} \label{G1}
		\mathfrak{G}^{(i)}_1 = & - \partial_t w^{(i)}_0(0,t) \big(1 - \chi_{\ell_0}(\xi^{(i)}_1)\big) - \mathrm{v}_i\, \Theta^{(i)}_{1}(\xi^{(i)}_1,t) \,\chi'_{\ell_0}(\xi^{(i)}_1) \notag
		\\
		& + w^{(i)}_1(0,t) \, \chi''_{\ell_0}(\xi^{(i)}_1) 
		- \mathrm{v}_i  \, w^{(i)}_1(0,t) \, \chi'_{\ell_0}(\xi^{(i)}_1) + \Big(\Theta^{(i)}_{1}(\xi^{(i)}_1,t) \,\chi_{\ell_0}(\xi^{(i)}_1)\Big)'' .
	\end{align}
	Each $\mathfrak{G}^{(i)}_1$ has compact support in $\Xi^{(i)}$, which simplifies the solvability condition.
	Using these representations, the solvability condition~\eqref{solvability-cond} for $k=1$ becomes
	\[
	\int_{\Xi^{(0)}} \partial_t w^{(1)}_0(0,t) \, d\xi
	\;+\;
	\sum_{i=1}^{3}\int_{\Xi^{(i)}} \mathfrak{G}^{(i)}_1 \, d\xi^{(i)}
	\;=\;
	\sum_{i=1}^{3}\int_{\Gamma^{(i,0)}}
	\Phi_0\big(u^{(i)}_0, w^{(1)}_0(0,t), \xi,t\big)\, dl_\xi.
	\]
	Evaluating the integrals $\int_{\Xi^{(i)}} \mathfrak{G}^{(i)}_1\,d\xi^{(i)}$ yields $2h_i\,\mathrm{v}_i\,w^{(i)}_1(0,t)$, and therefore the solvability condition reduces to
	\[
	\sum_{i=1}^{3} 2 h_i\,\mathrm{v}_i\, w_1^{(i)}(0,t)
	= {\bf d}_1(t),
	\]
	where
	\begin{equation}\label{d_1}
		{\bf d}_1(t)
		:= w^{(1)}_0(0,t)\, |\Xi^{(0)}|
		- \sum_{i=1}^{3}\int_{\Gamma^{(i,0)}}
		\Phi_0\big(u^{(i)}_0, w^{(1)}_0(0,t), \xi,t\big)\, dl_\xi.
	\end{equation}
	Here, $|\Xi^{(0)}|$ denotes the Lebesgue measure of the set $\Xi^{(0)}.$
	
	Thus, the functions $\{w^{(i)}_1\}_{i=1}^3$ satisfy the  mixed hyperbolic problem
	\begin{equation}\label{prob_w_1}
		\left\{
		\begin{array}{rcll}
			\partial_t w^{(i)}_1
			+ \partial_{y_1^{(i)}}\!\big(v_i^{(i)}\, w^{(i)}_1\big)
			&=&
			\partial_{y^{(i)}_1}^2 w^{(i)}_0
			+ \widehat{\Phi}^{(i)},
			& (y_1^{(i)},t)\in\mathcal{I}_i\times(0,T),
			\\[2mm]
			\displaystyle \sum_{i=1}^{3} 2 h_i\,\mathrm{v}_i\, w_1^{(i)}(0,t)
			&=& {\bf d}_1(t),
			& t\in(0,T),
			\\[2mm]
			w_1^{(1)}(\ell_1,t) &=& 0,
			& t\in[0,T],
			\\[2mm]
			w_1^{(i)}(y_1^{(i)},0) &=& 0,
			& y_1^{(i)}\in[0,\ell_i],\ i=1,2,3.
		\end{array}
		\right.
	\end{equation}
	Here $\widehat{\Phi}^{(1)}$ is given in~\eqref{fun_1}, and
	\begin{gather}\label{fun_2}
		\widehat{\Phi}^{(2)}(y^{(2)}_1,t)
		:= \frac{1}{2h_2}\Big(
		\Phi^{(1,2)}(u^{(1)}_0,w^{(2)}_0,y^{(2)}_1,t)
		+
		\Phi^{(2,2)}(u^{(2)}_0,w^{(2)}_0,y^{(2)}_1,t)
		\Big),
		\\[2mm]
		\label{fun_3}
		\widehat{\Phi}^{(3)}(y^{(3)}_1,t)
		:= \frac{1}{2h_3}\Big(
		\Phi^{(2,3)}(u^{(2)}_0,w^{(3)}_0,y^{(3)}_1,t)
		+
		\Phi^{(3,3)}(u^{(3)}_0,w^{(3)}_0,y^{(3)}_1,t)
		\Big).
	\end{gather}
	
	Using the approach of \cite[Sect.~3]{Mel-Roh_AA-2024} and \cite[\S\,3.3]{Mel-Roh_JMAA-2024}, one obtains that problem~\eqref{prob_w_1} admits a unique solution.  
	Its explicit representation follows from integrating the corresponding mixed problems along characteristic curves.  
	To satisfy the vertex condition, the mixed problems for $w^{(2)}_1$ and $w^{(3)}_1$ must be solved with boundary data
	\[
	w^{(i)}_1(0,t)
	=
	\frac{1}{4 h_i\,\mathrm{v}_i}
	\Big(
	{\bf d}_1(t) - 2 h_1\,\mathrm{v}_1\, w_1^{(1)}(0,t)
	\Big),
	\qquad t\in[0,T],\ i\in\{2,3\}.
	\]

	Consequently, both problem~\eqref{N_k_prob} (with $k=1$) and
	problem~\eqref{N_1_prob} have unique solutions, and
	\begin{equation}\label{exp-decrease+1}
		N_1(\xi,t)
		=
		w^{(i)}_1(0,t)
		+
		\Theta^{(i)}_1(\xi^{(i)}_1,t)
		+
		\mathcal{O}\!\left(e^{-\beta_0 \xi^{(i)}_1}\right)
		\quad\text{as } \quad \xi^{(i)}_1\to+\infty,
		\quad \xi\in\Xi^{(i)},\ i=1,2,3,
	\end{equation}
	for some $\beta_0>0.$ Thus, the inner node-layer ansatz \eqref{junc} becomes 
	\begin{equation}\label{junc+1}
		\mathfrak{N}_\varepsilon(x,t)
		= w_0^{(1)}(0,t) + \varepsilon\,N_1\!\left(\frac{x}{\varepsilon},t\right).
	\end{equation}

	Finally, the coefficients $\{z_2^{(i)}\}_{i=1}^3$ are uniquely determined as
	solutions of the corresponding Neumann problems on the rescaled
	cross-sections.  
	For instance, the problem \eqref{eq-2}, \eqref{bc-3}, \eqref{bc-4} becomes
	\begin{equation}\label{prob-z-2}
		\left\{
		\begin{array}{ll}
			\partial_{\xi_2^2}^2 z^{(1)}_2
			=
			\widehat{\Phi}^{(1)}
			+ \partial_t z^{(1)}_1
			+ \partial_{y^{(1)}_1}\!\big(v_1^{(1)} z^{(1)}_1\big)
			+ \partial_{\xi_2}\!\big(v_2^{(1)}(w^{(1)}_1 + z^{(1)}_1)\big),
			& \xi_2\in(-h_1,h_1),
			\\[2mm]
			-\partial_{\xi_2} z^{(1)}_2
			+ v_2^{(1)}(w^{(1)}_1 + z^{(1)}_1)
			= \Phi^{(1,1)}(u^{(1)}_0,w^{(1)}_0,y^{(1)}_1,t),
			& \xi_2=h_1,
			\\[2mm]
			-\partial_{\xi_2} z^{(1)}_2
			+ v_2^{(1)}(w^{(1)}_1 + z^{(1)}_1)
			= \Phi^{(3,1)}(u^{(3)}_0,w^{(1)}_0,y^{(1)}_1,t),
			& \xi_2=-h_1,
			\\[2mm]
			\displaystyle \int_{-h_1}^{h_1} z_2^{(1)}(y^{(1)}_1,\xi_2,t)\,d\xi_2 = 0.
			&
		\end{array}
		\right.
	\end{equation}
	
	Determining all terms of the ansatz \eqref{regul-1+} allows us to compute the
	residuals it leaves in equation~\eqref{eq-thin-rect}. We obtain
	\begin{equation}\label{eq-thin-rect+}
		\partial_t \mathfrak{W}^{(1)}_\varepsilon
		+ \mathrm{div}_{y^{(1)}}\!\big(\mathfrak{W}^{(1)}_\varepsilon\,\overrightarrow{V_\varepsilon}^{(1)}\big)
		- \varepsilon\,\Delta_{y^{(1)}}\mathfrak{W}^{(1)}_\varepsilon
		= \varepsilon^2\,\digamma^{(1)}_\varepsilon
		\quad\text{in }\mathcal{R}_\varepsilon^{(1)}\times(0,T),
	\end{equation}
	where
	\begin{equation}\label{digamma}
		\digamma^{(1)}_\varepsilon\big(y^{(1)}_1, \tfrac{{y}^{(1)}_2}{\varepsilon}, t\big)
		:= \partial_t z_2^{(1)} + \partial_{y^{(1)}_1}\big(v_1^{(1)} \, z^{(1)}_2\big) + 
		\partial_{\xi_2}\big(v_2^{(1)} \,z^{(1)}_2\big) - \partial^2_{y^{(1)}_1}w^{(1)}_1
		- \partial^2_{y^{(1)}_1}z^{(1)}_1 .
	\end{equation}
	
	\begin{remark}\label{rem-2-2}
		From \eqref{eq-thin-rect+}–\eqref{digamma} it follows that the functions
		$\{w^{(i)}_1\}_{i=1}^3$ must be of class $C^2$, that is,
		$w^{(i)}_1\in C^2([0,\ell_i]\times[0,T])$.  
		In particular, this regularity holds provided the right-hand sides in the
		differential equations of problem~\eqref{prob_w_1} are $C^2$–smooth; see, for
		example, \cite[Th.~5.4]{Friedrichs-1948} for one sufficient condition.  
		Consequently, the leading-order terms $\{w^{(i)}_0\}_{i=1}^3$ must be of class
		$C^4$ on the edges of the graph, and the bulk solutions $u^{(i)}_0$ to
		problem~\eqref{relations+} must possess the corresponding regularity for each
		$i\in\{1,2,3\}$.
		
		Using the smoothness of the advection field, assumption~\eqref{q_1}, and the
		explicit representation of the solution obtained by integrating the equations in
		problem~\eqref{limit_prob} along the characteristic curves and incorporating the
		boundary and initial conditions, one verifies that the functions $\{w^{(i)}_0\}$
		indeed possess this regularity; see \cite[\S 3.2.1]{Mel-Roh_AA-2024} and
		\cite[\S 3.3]{Mel-Roh_JMAA-2024} for details.
		
		The regularity of the bulk solutions $u^{(i)}_0$ is addressed in the next
		subsection.
	\end{remark}
	
	\subsection{Regularity of the bulk solutions $\{u_0^{(i)}\}$}
	Since the functions $\{w^{(i)}_0\}$ are $C^4$-smooth,  the right-hand sides
	of the nonlinear Robin conditions on $\mathcal{I}_i$ and $\mathcal{I}_j$ in
	problem~\eqref{relations+} are $C^2$-smooth in all variables due to  assumptions~${\bf A4}.$
	We now restrict our consideration to problem~\eqref{relations+} with $i=1$ and establish the following statement.

	\begin{proposition}[Solvability and regularity]\label{prop-2-1}
		Under assumptions $\mathbf{A1}$ and $\mathbf{A4}$, 
		problem~\eqref{relations+} for $i=1$ admits a unique weak solution 
		possessing $W^{2,1}_2$--regularity and Hölder regularity
		\begin{equation}\label{hoelder-0}
			u\in H^{\mu,\mu/2}\big(\overline{\Omega^{(1)}}\times[0,T]\big)
			\qquad\text{for every }\mu< \frac{\pi}{\theta_1},
		\end{equation}
		where $\frac{\pi}{\theta_1}>1$ is the corner exponent at the origin.
		In addition, for  sufficiently small $\delta>0$,
		\begin{equation}\label{hoelder}
			u\in H^{2+\mu,\,1+\mu/2}\big((\overline{\Omega^{(1)}}\setminus B_\delta(0))\times[0,T]\big)
			\qquad\text{for all }\mu\in(0,1).
		\end{equation}
	\end{proposition}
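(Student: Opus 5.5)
Our plan has three stages: weak solvability for fixed Robin data, $W^{2,1}_2$ regularity, and corner/Hölder regularity.

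\emph{Existence and uniqueness.} Since $w^{(1)}_0, w^{(2)}_0$ are already known and $C^4$-smooth (Remark~\ref{rem-2-2}), the boundary terms $\Psi^{(1,1)}(s,w^{(1)}_0,y^{(1)}_1,t)$ and $\Psi^{(1,2)}(s,w^{(2)}_0,y^{(2)}_1,t)$ are given nonlinearities in $s$ alone. They are globally Lipschitz and monotone nondecreasing by \eqref{pos-2}, and $F_1$ is monotone by \eqref{pos-1}. We work in $V=\{u\in H^1(\Omega^{(1)}): u|_{\mathcal B_1}=0\}$ and apply Galerkin approximation (or Lions' theory for monotone operators) to the weak form
\[
\langle\partial_t u,\phi\rangle + D_1\int_{\Omega^{(1)}}\nabla u\cdot\nabla\phi\,dx+\int_{\Omega^{(1)}}F_1(u,x,t)\,\phi\,dx+\sum_j\int_{\mathcal I_j}\Psi^{(1,j)}(u,w^{(j)}_0,\cdot,t)\,\phi\,dl=\int_{\Omega^{(1)}}f_1\,\phi\,dx .
\]
The energy estimate follows from the trace inequality together with boundedness of $F_1$ and $\Psi$. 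For uniqueness, we subtract the equations for two solutions and test with their difference; the monotonicity terms are nonnegative, so Gronwall gives uniqueness.

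\emph{$W^{2,1}_2$-regularity.} We test with $\partial_t u$ and then with $\partial_t^2 u$ after differentiating in $t$. The compatibility conditions in \textbf{A1} and \eqref{comp-cond-Psi} make $\partial_t u(0)=0$ admissible. This yields $\partial_t u\in L^2(0,T;H^1)$ and $\partial_t u\in L^\infty(0,T;L^2)$. For a.e.\ $t$ we then have an elliptic mixed problem
\[
-D_1\Delta u = f_1-F_1-\partial_t u\in L^2
\]
with Robin data in $H^{1/2}$ on the edges. Away from the corner and from the points where $\mathcal B_1$ meets $\mathcal I_j$ (where $\Psi$ vanishes near the endpoints by \textbf{A4}, so the condition is pure Neumann meeting Dirichlet), local $H^2$ regularity is standard. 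At the Dirichlet--Neumann junction points on the smooth boundary $\partial\Omega$ the angle is $\pi$, and it is not obvious that full $H^2$ holds there. Our first approach is to use that $\Psi=0$ near $\partial\Omega$, so near these points the problem is homogeneous Neumann on a straight segment meeting $\partial\Omega$. If that is not enough, we will argue that the data vanish there, so the solution is smooth up to $t$-dependence via the compact support of $f_1$, $F_1$. At the origin, the corner has angle $\theta_1<\pi$ with Neumann conditions on both sides, and Kondrat'ev/Grisvard theory gives $H^2$ because the first singular exponent is $\pi/\theta_1>1$. Combined with the time derivative bound, this gives $u\in W^{2,1}_2$.

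\emph{Hölder regularity.} Away from $B_\delta(0)$, \eqref{hoelder} follows from the Ladyzhenskaya--Solonnikov--Ural'tseva Schauder theory for parabolic problems with nonlinear oblique (Robin) conditions, bootstrapped from the previous stage. This requires $C^{1+\mu}$-type data, which \textbf{A1}, \textbf{A4} and the $C^4$-smoothness of $w^{(j)}_0$ give, and zero-order compatibility at $t=0$, which holds because all data vanish near $t=0$. Near the corner, we freeze the nonlinearity: the right-hand sides are Hölder, so by Kondrat'ev's asymptotics (or the weighted Hölder theory of Maz'ya--Plamenevskii and Solonnikov for parabolic problems in angles) $u$ equals a smooth part plus $c(t)\,r^{\pi/\theta_1}\cos(\pi\varphi/\theta_1)$ plus higher-order terms. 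Hence $u\in H^{\mu,\mu/2}$ for every $\mu<\pi/\theta_1$, which is \eqref{hoelder-0}.

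The main obstacle is this corner step in the parabolic setting with nonlinear Robin data. We must check that the nonlinear boundary terms, being Lipschitz in $u$ and vanishing to order zero at the vertex in a Hölder sense, do not generate additional singular exponents below $\pi/\theta_1$. We will handle this by moving $\Psi^{(1,j)}(u,\ldots)$ to the right-hand side as known Hölder data from the previous step, then applying the linear weighted estimates iteratively.
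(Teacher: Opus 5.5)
Your overall route matches the paper's. You use Galerkin approximation for the weak solution (your monotonicity argument for uniqueness is fine). You use parabolic $L^2$-regularity with the Neumann corner exponents $k\pi/\theta_1$ at the origin, the expansion \eqref{asym-in-coner} for \eqref{hoelder-0}, and Schauder theory away from the vertex.

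The genuine gap is at the two points $\overline{\mathcal B_1}\cap\overline{\mathcal I_1}$ and $\overline{\mathcal B_1}\cap\overline{\mathcal I_2}$. You assert that the angle there is $\pi$ and leave the $H^2$ question open. Your fallback (``the data vanish there, so the solution is smooth'') does not work. At a Dirichlet--Neumann junction, the coefficient of the singular function is a global quantity and is not controlled by local vanishing of the data. If the angle really were $\pi$, the solution would generically contain $r^{1/2}\sin(\phi/2)$, and neither $W^{2,1}_2$ nor \eqref{hoelder} would hold. These points lie in $\overline{\Omega^{(1)}}\setminus B_\delta(0)$, so your appeal to Ladyzhenskaya--Solonnikov--Ural'tseva Schauder theory for \eqref{hoelder} silently relies on exactly this unestablished regularity. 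That theory covers smooth boundaries carrying a single boundary operator, not a Dirichlet--Neumann switch.

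The missing observation is geometric. The edge $\mathcal I_j$ is not part of $\partial\Omega$; it is a straight slit reaching $\partial\Omega$ in the direction $\mathbf e^{(j)}$. At that point, $\partial\Omega$ contains a straight interval (the one containing $\Upsilon^{(j)}_\varepsilon(\ell_j)$) orthogonal to $\mathbf e^{(j)}$, so the corner of $\Omega^{(1)}$ there is a right angle. Since $\Psi^{(1,j)}$ vanishes near $y^{(j)}_1=\ell_j$ and $f_1$, $F_1$ are compactly supported in $\Omega^{(1)}$, the local problem is the heat equation with zero Dirichlet data on one straight side and zero Neumann data on the other. Odd reflection across the Dirichlet side turns this into a flat boundary point with homogeneous Neumann data. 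Equivalently, the mixed exponents $(k+\tfrac12)\pi/(\pi/2)=2k+1$ give only harmonic polynomials, so no singularity arises. This is how the paper closes both the $W^{2,1}_2$ and the Schauder steps.

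For the same reason, the ``main obstacle'' you single out at the origin does not occur. By $\mathbf{A1}$ and $\mathbf{A4}$, all sources and Robin data vanish near the vertex. The local model there is therefore the homogeneous Neumann problem in the sector, and no iteration with the nonlinear boundary terms is needed.
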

	\begin{proof}
		Since $F_1$, $\Psi^{(1,1)}$ and $\Psi^{(1,2)}$ are globally Lipschitz in $u$, 
		the variational formulation is well posed in the space
		\[
		u\in L^2\big(0,T; H^1(\Omega^{(1)};\mathcal{B}_1)\big),
		\qquad 
		\partial_t u\in L^2\big(0,T; H^{-1}(\Omega^{(1)};\mathcal{B}_1)\big),
		\]
		where $H^1(\Omega^{(1)};\mathcal{B}_1) := \{v\in H^1(\Omega^{(1)}): v|_{\mathcal{B}_1}=0\}$ and $H^{-1}(\Omega^{(1)};\mathcal{B}_1)$ is the dual space to $H^1(\Omega^{(1)};\mathcal{B}_1).$
		A standard Galerkin approximation combined with Grönwall’s inequality yields existence and uniqueness of a weak solution.
		
		\smallskip
		\noindent\textsf{$W^{2,1}_2$--regularity.}
		Near the origin, the boundary of $\Omega^{(1)}$ consists only of the sides 
		$\mathcal I_1$ and $\mathcal I_2$.  
		By assumption, the fluxes $\Psi^{(1,1)}$ and $\Psi^{(1,2)}$ vanish in a neighbourhood 
		of the origin as well as the source term $f_1$ and the reaction term
		$F_1$, so the local model is the \emph{Neumann problem in a sector} of opening 
		angle $\theta_1\in(\frac{\pi}{2},\pi)$ (see \S\ref{subsec-1-1}).  
		According to the classical theory of elliptic problems in angular domains 
		\cite{Kondratev1967,Grisvard1985,Dauge1988,Maz-Naz-Pla-2012}, the corner singular 
		exponents are
		\[
		\lambda_k=\frac{k\pi}{\theta_1},\qquad k=0,1,2,\dots,
		\]
		with $\lambda_0=0$ and $\lambda_k>1$ for all $k\ge1$.  
		Hence all second derivatives of solutions to the elliptic Neumann problem belong to 
		$L^2$ near the origin.
		
		The other two corner points of $\Omega^{(1)}$ are 
		$\overline{\mathcal{B}_1}\cap\overline{\mathcal{I}_1}$ and 
		$\overline{\mathcal{B}_1}\cap\overline{\mathcal{I}_2}$.  
		These are right angles with Dirichlet data on $\mathcal{B}_1$ and Neumann data on 
		$\mathcal{I}_j$.  
		Using odd reflection across the Dirichlet boundary, each such corner becomes a 
		\emph{smooth Neumann boundary point} for the extended problem, and therefore does not 
		produce any singularity in $u$.
		
		Assumptions \eqref{comp-cond-Psi} guarantee full compatibility of the Neumann boundary 
		conditions at $t=0$ and prevent the formation of an initial boundary layer.  
		Therefore, the classical parabolic $L^2$--regularity theory 
		\cite{LSU,Lions-Magenes} yields
		\[
		u\in W^{2,1}_2(\Omega^{(1)}\times(0,T)).
		\]
		
		\smallskip
		\noindent\textsf{Hölder regularity.}
		Solutions of the elliptic Neumann problem in a sector admit the asymptotic expansion
		\begin{equation}\label{asym-in-coner}
			u(r,\phi)=c_0 + c_1 r^{\lambda_1}\varPhi_1(\phi) + \cdots,
			\qquad \lambda_1=\frac{\pi}{\theta_1}>1,
		\end{equation}
		(see \cite{Kondratev1967,Grisvard1985,Dauge1988,Maz-Naz-Pla-2012}; here $(r,\phi)$ are the polar coordinates),  which implies the optimal spatial regularity
		\[
		u(\cdot,t)\in C^{1,\beta}(\overline{\Omega^{(1)}}),
		\qquad 
		\beta=\lambda_1-1=\frac{\pi}{\theta_1}-1>0.
		\]
		
		At the two corner points 
		$\overline{\mathcal{B}_1}\cap\overline{\mathcal{I}_1}$ and 
		$\overline{\mathcal{B}_1}\cap\overline{\mathcal{I}_2}$, 
		the above reflection argument shows that the solution behaves as in a smooth Neumann 
		boundary situation; in particular, no singular exponents appear there.  
		Thus, away from the vertex at the origin, the boundary is effectively smooth for the 
		extended problem, and the standard parabolic Schauder theory 
		(see \cite[Ch.~IV]{LSU}, \cite[Ch.~IV]{Lieberman}) applies up to these points.  
		We obtain	
		\[
		u\in H^{\mu,\mu/2}\big(\overline{\Omega^{(1)}}\times[0,T]\big)
		\qquad\text{for every }\mu<1+\beta = \frac{\pi}{\theta_1}.
		\]
		
		Finally, since $\Omega^{(1)}\setminus B_\delta(0),$ where $B_\delta(0) = \{x\colon \ |x| < \delta\}$ and
		$\delta$ is chosen such that all the supports of the functions $F_1,$ $f_1,$ $\Psi^{(i,j)}$ belong to $\overline{\Omega}^{(1)}\setminus B_\delta(0),$ the Schauder estimates give \eqref{hoelder}.	
	\end{proof}

	\begin{remark}\label{rem-2-3}
		Assumption~${\bf A4}$ together with the bulk solution regularity
		\eqref{hoelder} ensures that the functions $\{\widehat{\Phi}^{(i)}\}$ appearing
		in the right-hand sides of problem~\eqref{prob_w_1} are compactly supported in
		the longitudinal variables on their respective edges of the graph and are
		$C^2$–smooth on these supports.  
		The compact supports, combined with assumptions~\eqref{com-con-Phi}, are
		essential for satisfying the compatibility conditions (see
		\cite[\S 3.2.1]{Mel-Roh_AA-2024} for a detailed discussion).  
		Consequently, the correctors $\{w^{(i)}_1\}_{i=1}^3$ inherit the required
		$C^2$–regularity.
		
		Furthermore, the smoothness of $\{w^{(i)}_0\}_{i=1}^3$ and
		$\{w^{(i)}_1\}_{i=1}^3$ implies that the coefficients
		$\{z^{(i)}_1\}_{i=1}^3$ and $\{z^{(i)}_2\}_{i=1}^3$ in the ansatz
		\eqref{regul-1+} possess the necessary regularity.  
		Since the transversal components of the advection field and the functions
		$\{\widehat{\Phi}^{(i)}\}$ and $\{\Phi^{(i,j)}\}$ are compactly supported in
		the longitudinal variables on their respective edges, the coefficients
		$\{z^{(i)}_1\}$ and $\{z^{(i)}_2\}$ inherit the same compact supports in the
		corresponding longitudinal variables $\{y^{(i)}_1\}_{i=1}^3$.
		
		As a consequence, the function $\digamma_\varepsilon^{(1)}$ defined in
		\eqref{digamma} is uniformly bounded, namely,
		\begin{equation}\label{uniform-est-1}
			\sup_{\mathcal{R}_\varepsilon^{(1)}\times(0,T)}
			\left|\digamma_\varepsilon^{(1)}\!\left(y^{(1)}_1,
			\tfrac{y^{(1)}_2}{\varepsilon}, t\right)\right| \le C_1.
		\end{equation}
	\end{remark}

	\begin{remark}
		Throughout the paper, all constants in the inequalities are positive and
		independent of the parameter $\varepsilon$.  
		Constants carrying the same index in different inequalities may represent
		different numerical values.
	\end{remark}
	
	\subsection{Boundary-layer parts of the approximation}\label{subsec_Bound_layer}
	
	The regular ansatzes
	\begin{equation}\label{regul-2+}
		\mathfrak{W}^{(i)}_\varepsilon =
		w_0^{(i)}(y^{(i)}_1,t)
		+ \varepsilon\left(w_1^{(i)}(y^{(i)}_1,t)
		+ z_1^{(i)}\!\Big(y^{(i)}_1,\tfrac{y^{(i)}_2}{\varepsilon},t\Big)\right)
		+ \varepsilon^2 z_2^{(i)}\!\Big(y^{(i)}_1,\tfrac{y^{(i)}_2}{\varepsilon},t\Big),
		\qquad i\in\{2,3\},
	\end{equation}
	do not satisfy the Dirichlet outflow conditions on the sides
	$\{\Upsilon_{\varepsilon}^{(i)}(\ell_i)\}_{i=2}^3$ of the outlet thin
	rectangles with respect to the advection field.  
	Therefore, we introduce boundary-layer corrections that compensate the
	residuals of the regular ansatzes on these sides.  
	We use the boundary-layer ansatz
	\begin{equation}\label{prim+}
		\mathfrak{B}^{(i)}_\varepsilon(y^{(i)},t)
		:= \sum_{k=0}^{2}\varepsilon^{k}\,
		\Pi_k^{(i)}\!\left(
		\frac{\ell_i - y^{(i)}_1}{\varepsilon},
		\frac{y^{(i)}_2}{\varepsilon},
		t
		\right),
		\qquad i\in\{2,3\}.
	\end{equation}
	
	We additionally assume that the longitudinal component $v_1^{(i)}$ of the
	vector field $\overrightarrow{V_\varepsilon}^{(i)}$ is independent of
	$y_1^{(i)}$ in a small neighborhood of $\Upsilon_{\varepsilon}^{(i)}(\ell_i)$,
	i.e.,
	$
	\overrightarrow{V_\varepsilon}^{(i)} = \big(v_1^{(i)}(\ell_i),\,0\big),$
	$ i\in\{2,3\}.$
	This is a technical assumption; in the general case, $v_1^{(i)}$ should be
	expanded in a Taylor series near $y_1^{(i)}=\ell_i$.
	
	Substituting~\eqref{prim+} into \eqref{eq-thin-rect} and the corresponding boundary conditions, and using the assumption on the transversal component $v_2^{(i)},$ we obtain, after collecting coefficients of equal powers of 
	$\varepsilon,$ the following problems.
	For $k=0$:
	\begin{equation}\label{prim+probl+0}
		\left\{
		\begin{array}{rcll}
			\Delta_\eta \Pi_0^{(i)} + v_1^{(i)}(\ell_i)\,\partial_{\eta_1}\Pi_0^{(i)}
			&=& 0,
			& \eta\in\mathfrak{C}_+^{(i)},
			\\[2mm]
			\partial_{\eta_2}\Pi_0^{(i)} &=& 0,
			& \eta\in\partial\mathfrak{C}_+^{(i)}\setminus\Upsilon^{(i)},
			\\[2mm]
			\Pi_0^{(i)}(0,\eta_2,t) &=& \Lambda^{(i)}_0(t),
			& \eta_2\in(-h_i,h_i),
			\\[2mm]
			\Pi_0^{(i)}(\eta,t) &\to& 0,
			& \eta_1\to+\infty,
		\end{array}
		\right.
	\end{equation}
	where $\eta_1 = \frac{\ell_i - y_1^{(i)}}{\varepsilon},$ $\eta_2 = \frac{y_2^{(i)}}{\varepsilon},$
	$\mathfrak{C}_+^{(i)} := \{\eta:\ \eta_1>0,\ |\eta_2|<h_i\},$ $\Upsilon^{(i)} := \{\eta:\ \eta_1=0,\ |\eta_2|<h_i\},$
	$\Lambda^{(i)}_0(t) := q_i(t) - w_0^{(i)}(\ell_i,t). $
	\
	For $k=1,2$:
	\begin{equation}\label{prim+probl+k}
		\left\{
		\begin{array}{rcll}
			\Delta_\eta \Pi_k^{(i)} + v_1^{(i)}(\ell_i)\,\partial_{\eta_1}\Pi_k^{(i)}
			&=& \partial_t \Pi_{k-1}^{(i)},
			& \eta\in\mathfrak{C}_+^{(i)},
			\\[2mm]
			\partial_{\eta_2}\Pi_k^{(i)} &=& 0,
			& \eta\in\partial\mathfrak{C}_+^{(i)}\setminus\Upsilon^{(i)},
			\\[2mm]
			\Pi_k^{(i)}(0,\eta_2,t) &=& \Lambda^{(i)}_k(t),
			& \eta_2\in(-h_i,h_i),
			\\[2mm]
			\Pi_k^{(i)}(\eta,t) &\to& 0,
			& \eta_1\to+\infty,
		\end{array}
		\right.
	\end{equation}
	with $\Lambda^{(i)}_1(t) = -w_1^{(i)}(\ell_i,t),$ $\Lambda^{(i)}_2(t) = 0.$ The variable $t$ enters as a parameter in these problems.
	
	Using the Fourier method, one easily obtains explicit solutions of
	\eqref{prim+probl+0}–\eqref{prim+probl+k}. For example,
	\[
	\Pi_0^{(i)}(\eta_1,t)
	= \Lambda_0^{(i)}(t)\,e^{-v_1^{(i)}(\ell_i)\eta_1},
	\qquad
	\Pi_1^{(i)}(\eta_1,t)
	= \left(\Lambda_1^{(i)}(t)
	- \frac{\partial_t\Lambda_0^{(i)}(t)}{v_1^{(i)}(\ell_i)}\,\eta_1\right)
	e^{-v_1^{(i)}(\ell_i)\eta_1}.
	\]

	Since $v_1^{(i)}(\ell_i)\ge\theta_0>0$ for all $t\in[0,T]$, and the prefactors
	of $e^{-v_1^{(i)}(\ell_i)\eta_1}$ remain bounded, we obtain the uniform decay
	estimate
	\begin{equation}\label{exp_decay}
		\Pi_k^{(i)}(\eta_1,t)
		= \mathcal{O}\!\left(e^{-\frac{\theta_0}{2}\eta_1}\right)
		\quad\text{as }\eta_1\to+\infty,
		\qquad k\in\{0,1,2\},\ i\in\{2,3\}.
	\end{equation}
	Obviously,
	\begin{equation}\label{in_cond}
		\Pi_0^{(i)}|_{t=0}
		= \Pi_1^{(i)}|_{t=0}
		= \Pi_2^{(i)}|_{t=0}
		= 0,
		\qquad i\in\{2,3\}.
	\end{equation}

	
	\subsection{Inner-corner asymptotics}\label{corner-2-4}
	
	Since for each $i\in\{1,2,3\}$ the leading term of the regular expansion \eqref{exp-1} is uniquely determined 
	as a solution to problem~\eqref{relations+}, it does not 
	satisfy the nonlinear Robin condition~\eqref{coup-1} on the  boundary segment 
	$\Gamma^{(i,0)}_\varepsilon$. Therefore, additional \emph{inner-corner correctors} must be introduced.  
	We describe this construction in detail for problem~\eqref{relations+} with $i=1$.  
	
	\begin{remark}
		The general theory for constructing the asymptotics of solutions to boundary-value problems in non-local perturbed  domains with corner points, presented in \cite[\S 5.5]{Maz-Naz-Pla-2012}, provides a comprehensive set of tools for analysis in special weighted spaces. However, its application is excessive for our problem. We employ suitable conformal mappings directly, which allow the required properties of solutions to be obtained much more simply. Furthermore, the examples in the monograph do not include problems with Robin conditions, which further justifies the use of our approach. In addition to the standard regularity and asymptotic behaviour of solutions, this approach also yields detailed information on the behaviour of the normal derivatives to rays parallel to the sides of the angle, and we study this behaviour along those rays. This information is needed to estimate residuals near the node.
	\end{remark}
	
	We denote by $\theta := \theta_1 \in(\frac{\pi}{2},\pi)$ the opening 
	angle between the sides $\mathcal I_1$ and $\mathcal I_2$.
	After introducing the scaled variables 
	\begin{equation}\label{rescaled}
		\zeta_1 = \frac{x_1 - \varepsilon\big(\frac{h_2}{\cos \theta} - h_1\big) \cot\theta}{\varepsilon}, \qquad
		\zeta_2 = \frac{x_2 - \varepsilon \, h_1}{\varepsilon}	
	\end{equation}
	and then letting $\varepsilon \to 0,$  the domain $\Omega_{\varepsilon}^{(1)}$ transforms into the unbounded domain $\mathfrak{K}$ which, outside a bounded neighborhood of the origin, coincides 
	with the sector
	$\{\zeta=(\zeta_1, \zeta_2)\colon \ \varrho > 0, \ \phi \in (\pi -\theta, \pi)\},$ and which belongs to this sector.
	Here $(\varrho,\phi)$ are the polar coordinates in the $\zeta$--plane.

	The boundary of $\mathfrak{K}$ consists of two rays, $\mathfrak{I}_1 =\{\zeta\colon  \phi = \pi, \ \varrho > \ell_0^*\} $ and
	$\mathfrak{I}_2 =\{\zeta\colon  \phi = \pi -\theta, \ \varrho > \ell_0^{**}\},$
	and a smooth curve $\mathfrak{I}_0$ connecting the initial points $P_1(\ell_0^*, \pi)$ and $P_2(\ell_0^{**}, \pi - \theta)$  of these rays, where $\ell_0^*$ and $\ell_0^{**}$ are some positive numbers. The curve $\mathfrak{I}_0$ is the image of ${\Gamma_\varepsilon^{(1,0)}}$ under the mapping~\eqref{rescaled}.
	
	In the intersection of a small neighborhood of the origin with  $\Omega_{\varepsilon}^{(1)}$ we seek an inner-corner ansatz in the form
	\begin{equation}\label{corn-asym}
		\varepsilon \, \mathcal{K}^{(1)}_1\Big(\frac{x_1 - \varepsilon\big(\frac{h_2}{\cos \theta} - h_1\big) \cot\theta}{\varepsilon}, \frac{x_2 - \varepsilon \, h_1}{\varepsilon}, t\Big) \,  + \ldots
	\end{equation}  
	Substituting \eqref{corn-asym} into the differential equation \eqref{p-equations} for 
	$i=1$ and into the corresponding boundary conditions, and using the compact-support 
	assumptions $\mathbf{A4}$ for $\Psi^{(1,1)}$ and 
	$\Psi^{(1,2)}$, we obtain the following boundary-value problem for the coefficient $\mathcal{K}_1 := \mathcal{K}^{(1)}_1$ :
	\begin{equation}\label{corn-probl+0}
		\left\{
		\begin{array}{rcll}
			\Delta_\zeta \mathcal{K}_1(\zeta,t) &=& 0,
			& \zeta\in \mathfrak{K},
			\\[2mm]
			\nabla_\zeta \mathcal{K}_1 \cdot \boldsymbol{\nu}_\zeta &=& 0,
			& \zeta\in \mathfrak{I}_1  \cup \mathfrak{I}_2,
			\\[2mm]
			\nabla_\zeta \mathcal{K}_1 \cdot \boldsymbol{\nu}_\zeta  &=& \varPsi_0(\zeta,t), 
			& \zeta \in \mathfrak{I}_0,
		\end{array}
		\right.
	\end{equation}
	where $\boldsymbol{\nu}_\zeta$ denotes the outward unit normal to $\partial\mathfrak{K}$, 
	the variable $t\in [0,T]$ plays the role of a parameter, and
	\begin{equation*}
		\varPsi_0(\zeta,t) = - \frac{1}{D_1} \, \Psi_0\Big(u_0^{(1)}(0,t), \, w_0^{(1)}(0,t),\,  \zeta_1 + \big(\tfrac{h_2}{\cos \theta} - h_1\big) \cot\theta, \, \zeta_2 + h_2, \, t\Big).
	\end{equation*}
	It should be noted that $\varPsi_0$ vanishes in a small vicinity of both the endpoints $P_1$ and $P_2$ of the curve  $\mathfrak{I}_0.$ 
	\begin{figure}[htbp]  
		\vspace*{-0.3cm}
		\begin{center}
			\includegraphics[width=10cm]{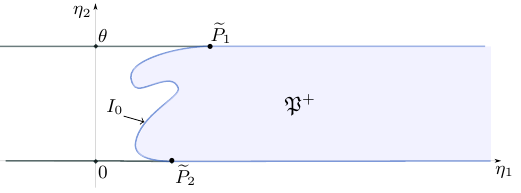}
		\end{center}
		\vspace*{-0.4cm}
		\caption{{\small Semi-strip $\mathfrak{P}^+$}}\label{fig-4}
	\end{figure}
	
	In new variables \ $\eta_1 = \log \varrho, \ \ \eta_2 = \phi - \pi + \theta$ problem~\eqref{corn-probl+0} takes form  
	\begin{equation}\label{strip-probl+0}
		\left\{
		\begin{array}{rcll}
			\Delta_\eta {K}_1(\eta,t) &=& 0,
			& \eta\in \mathfrak{P}^+,
			\\[2mm]
			\partial_{\eta_2}{K}_1|_{\eta_2 = 0}  &=& 0, & \eta_1 \in (\log \ell^{**}, +\infty),
			\\[2mm]
			\partial_{\eta_2}{K}_1|_{\eta_2 = \theta} &=& 0, & \eta_1 \in (\log \ell^{*}, +\infty),
			\\[2mm]
			\nabla_\eta{K}_1 \cdot \boldsymbol{\nu}_\eta &=& \widetilde{\varPsi}_0(\eta,t), & \eta \in I_0,
		\end{array}
		\right.
	\end{equation}
	where $\eta = (\eta_1, \eta_2),$ \ $\widetilde{\varPsi}_0(\eta,t) := e^{\eta_1} \varPsi_0(e^{\eta_1} \cos\eta_2, e^{\eta_1} \sin\eta_2,t),$
	${I}_0$ is the image of the curve $\mathfrak{I}_0.$ The curve ${I}_0$ connects the points $\widetilde{P}_1(\log \ell^{*}, \theta)$ and $\widetilde{P}_2(\log \ell^{**}, 0),$ and divides the strip $\{\eta\colon \ 0  < \eta_2 < \theta\}$ into two semi-strips: the right semi-strip $\mathfrak{P}^+$ (see Fig.~\ref{fig-4}) and the left one.
	
	Problem \eqref{strip-probl+0} will be analyzed through the following model problem:
	\begin{equation}\label{strip-probl+model}
		\left\{
		\begin{array}{rcll}
			\Delta_\eta {K}(\eta) &=& f(\eta),
			& \eta\in \mathfrak{P}^+,
			\\[2mm]
			\partial_{\eta_2}{K}|_{\eta_2 = 0}  &=& 0, & \eta_1 \in (\log \ell^{**}, +\infty),
			\\[2mm]
			\partial_{\eta_2}{K}|_{\eta_2 = \theta} &=& 0, & \eta_1 \in (\log \ell^{*}, +\infty),
			\\[2mm]
			\nabla_\eta{K} \cdot \boldsymbol{\nu}_\eta &=& g(\eta), & \eta \in I_0,
		\end{array}
		\right.
	\end{equation}
	where $f$ and $g$ are given smooth functions.
	We seek a solution in the weighted Sobolev space
	$$
	\mathcal{H}^+ := \big\{\, 
	u \in H^1_{loc,\eta_1}(\mathfrak{P}^+) \; \colon \ 
	\|\nabla_{\eta} u\|_{L^2(\mathfrak{P}^+)} < \infty,\quad  
	\|\rho\, u\|_{L^2(\mathfrak{P}^+)} < \infty
	\big\},
	$$ 
	where the weighted function $\rho(\eta_1)=(1+ |\eta_1|)^{-1},$ and the norm $\| \cdot \|_\mathcal{H}^+$ is generated by the scalar product
	$$
	(u, v)_\mathcal{H}^+
	=  \int_{\mathfrak{P}^+}\nabla_\eta u \cdot \nabla_\eta v \, d\eta + \int_{\mathfrak{P}^+} \rho^2(\eta_1) \, u \, v \, d\eta .
	$$
	Clearly, constant functions belong to $\mathcal{H}$ and the Dirichlet integral of every function from this space is finite.
	A  function $K \in \mathcal{H}^+$ is called a weak solution to  \eqref{strip-probl+model}
	if
	\begin{equation}\label{int-identity-1}
		\int_{\mathfrak{P}^+}\nabla_{\eta}K\cdot\nabla_{\eta}v \, d\eta =    \int_{I_0} g\, v\, dl_\eta - \int_{\mathfrak{P}^+} f \, v\,d\eta \qquad \forall\, v \in \mathcal{H}^+.  
	\end{equation}
	\begin{proposition}\label{prop-2-2}
		Let $\rho^{-1} f\in L^2(\mathfrak{P}^+).$
		Then a weak solution to problem \eqref{strip-probl+model} exists if and only if 
		\begin{equation}\label{solvability-1}
			\int_{I_0} g\, dl_\eta = \int_{\mathfrak{P}^+} f \, d\eta;
		\end{equation}
		this solution is determined up to an additive constant.
		
		If, in addition, $\exp(\beta\eta_1)f \in L^2(\mathfrak{P}^+)$ for some $\beta>0$, then there exists a unique solution satisfying
		the asymptotic behavior
		\begin{equation}\label{asym-1}
			K(\eta) = \mathcal{O}\!\left(\exp(-\beta_0 \eta_1)\right)
			\quad \text{as } \ \ \eta_1 \to +\infty ,
		\end{equation}
		for any $\beta_0 \in (0,\beta)$ such that $\beta_0 \le \sqrt{\lambda_1}$, where $\lambda_1$ is the first positive eigenvalue of the Neumann problem on the interval $(0,\theta)$.
	\end{proposition}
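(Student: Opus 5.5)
The proof splits into two parts: solvability, via Lax--Milgram on the quotient of $\mathcal{H}^+$ by constants, and decay, via a Fourier expansion in the cosine eigenfunctions of the cross-section $(0,\theta)$.

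\emph{Step 1: a weighted Poincaré (Hardy-type) inequality.} We first show that on $\mathcal{H}^+/\mathbb{R}$ the Dirichlet integral $\int_{\mathfrak{P}^+}|\nabla_\eta u|^2\,d\eta$ is an equivalent norm. Concretely, for $u\in\mathcal{H}^+$ with zero mean over a fixed bounded part $\mathfrak{P}^+_0$ of $\mathfrak{P}^+$ near $I_0$, we aim for
\[
\|\rho\,u\|_{L^2(\mathfrak{P}^+)}\le C\|\nabla_\eta u\|_{L^2(\mathfrak{P}^+)} .
\]
This follows from the one-dimensional Hardy inequality $\int_a^\infty (1+s)^{-2}|v(s)-v(a)|^2\,ds\le 4\int_a^\infty|v'|^2\,ds$, applied along $\eta_1$ and integrated over $\eta_2\in(0,\theta)$. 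The trace $v(a)$ is then controlled by the ordinary Poincaré and trace inequalities on $\mathfrak{P}^+_0$.

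\emph{Step 2: existence and uniqueness up to constants.} The right-hand side of \eqref{int-identity-1} must be a bounded functional on $\mathcal{H}^+/\mathbb{R}$. Boundedness follows from
\[
\Bigl|\int_{\mathfrak{P}^+} f v\,d\eta\Bigr|\le\|\rho^{-1}f\|_{L^2}\,\|\rho v\|_{L^2}
\]
together with the trace theorem on $I_0$ and Step 1. Testing \eqref{int-identity-1} with $v\equiv 1\in\mathcal{H}^+$ shows that \eqref{solvability-1} is necessary. Conversely, under \eqref{solvability-1} the functional annihilates constants, so it descends to the quotient. Lax--Milgram (or Riesz) then gives a solution that is unique modulo constants, since the bilinear form is coercive there by Step 1.

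\emph{Step 3: asymptotics.} Take $R$ larger than the $\eta_1$-extent of $I_0$, so that for $\eta_1>R$ the domain is the strip $(R,\infty)\times(0,\theta)$ with homogeneous Neumann conditions. Expand
\[
K=\sum_{k\ge0}K_k(\eta_1)\cos\bigl(\tfrac{k\pi\eta_2}{\theta}\bigr),\qquad \lambda_k=(k\pi/\theta)^2 .
\]
The coefficients satisfy $K_k''-\lambda_kK_k=f_k$.
\begin{itemize}
\item For $k\ge1$: finiteness of the Dirichlet integral excludes the growing exponentials, so by variation of constants $K_k=c_ke^{-\sqrt{\lambda_k}\eta_1}+\text{particular part}$. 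The particular part is $\mathcal{O}(e^{-\beta_0\eta_1})$ whenever $e^{\beta\eta_1}f_k\in L^2$ and $\beta_0<\beta$, $\beta_0\le\sqrt{\lambda_1}$.
\item For $k=0$: $K_0''=f_0$, and finiteness of $\|K_0'\|_{L^2}$ forces $K_0'(\eta_1)=-\int_{\eta_1}^\infty f_0\,ds$, which is exponentially small. Hence $K_0=c_\infty+\mathcal{O}(e^{-\beta_0\eta_1})$.
\end{itemize}
Subtracting the free constant $c_\infty$ gives the unique solution with \eqref{asym-1}. Summing the series in $L^2$ with Parseval, and then using interior and boundary elliptic estimates on unit cells $(n,n+1)\times(0,\theta)$, upgrades the decay to a pointwise bound.

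The main obstacle is keeping the constants uniform in $k$ in the $k\ge1$ modes, so that the weighted sum converges. When $\beta>\sqrt{\lambda_1}$, the threshold $\beta_0\le\sqrt{\lambda_1}$ is exactly what prevents a loss at the first mode. I would handle this by estimating $\sum_k\|e^{\beta_0\eta_1}(K_k-c_ke^{-\sqrt{\lambda_k}\eta_1})\|^2$ directly through the Green's function $\frac{1}{2\sqrt{\lambda_k}}e^{-\sqrt{\lambda_k}|\eta_1-s|}$ and Young's inequality. The resulting factor $(\sqrt{\lambda_k}-\beta_0)^{-1}$ is bounded uniformly in $k$, and where it degenerates at $\beta_0=\sqrt{\lambda_1}$ it is absorbed by taking $\beta_0$ slightly smaller or by the $c_1$ term.
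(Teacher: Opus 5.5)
Your proof is correct, but it takes a different route from the paper in both parts.

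\textbf{Existence.} The paper does not pass to the quotient by constants. It replaces the $\mathcal{H}^+$ inner product by the equivalent one $\langle u,v\rangle=\int_{\mathfrak{P}^+}\nabla_\eta u\cdot\nabla_\eta v\,d\eta+\int_{\mathfrak{P}^+_L}uv\,d\eta$, with equivalence again coming from Hardy's inequality. It then rewrites the weak formulation as $K-\mathbf{A}K=\mathcal{F}$, where $\mathbf{A}$ is the compact self-adjoint operator generated by $\int_{\mathfrak{P}^+_L}uv\,d\eta$. Condition \eqref{solvability-1} follows from the Fredholm alternative, because $\ker(I-\mathbf{A})$ consists of the constants.

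Your Lax--Milgram argument on $\mathcal{H}^+/\mathbb{R}$ is more direct. It gives existence, uniqueness modulo constants and the bound $\|\nabla_\eta K\|_{L^2}\le C(\|\rho^{-1}f\|_{L^2}+\|g\|_{L^2(I_0)})$ in one step. It also makes \eqref{solvability-1} transparent as the requirement that the right-hand functional annihilate constants, and it uses compactness only through the Poincar\'e inequality on the bounded piece $\mathfrak{P}^+_0$. The Fredholm formulation, in turn, would survive lower-order perturbations that destroy coercivity.

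\textbf{Decay.} The paper only appeals to standard properties of elliptic problems in semi-strips. Your cosine expansion actually proves the decay and shows where the exponents come from. The mode $k=0$ produces the free constant $c_\infty$ plus a remainder of order $e^{-\beta\eta_1}$. The mode $k=1$ limits the rate to $\sqrt{\lambda_1}=\pi/\theta$.

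\textbf{The endpoint $\beta_0=\sqrt{\lambda_1}<\beta$.} Here the weighted-$L^2$ Young bound fails for $k=1$, and shrinking $\beta_0$ does not prove the stated claim. You should therefore use your second option. Write the part $-\frac{1}{2\sqrt{\lambda_1}}\int_R^{\eta_1}e^{-\sqrt{\lambda_1}(\eta_1-s)}f_1(s)\,ds$ of the particular solution as $e^{-\sqrt{\lambda_1}\eta_1}$ times the constant $-\frac{1}{2\sqrt{\lambda_1}}\int_R^{\infty}e^{\sqrt{\lambda_1}s}f_1(s)\,ds$, plus a remainder of order $e^{-\beta\eta_1}$. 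The constant integral converges because $\beta>\sqrt{\lambda_1}$, and the term it defines is absorbed into $c_1$. Then carry out the final pointwise step with $L^2$ bounds on the individual cells $(n,n+1)\times(0,\theta)$. A global weighted norm $\|e^{\beta_0\eta_1}\,\cdot\,\|_{L^2}$ will not work there, because it is infinite for $c_1e^{-\sqrt{\lambda_1}\eta_1}\cos(\pi\eta_2/\theta)$ when $\beta_0=\sqrt{\lambda_1}$.
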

	\begin{proof}
		Necessity follows from \eqref{int-identity-1} with $v \equiv1$.  
		To prove sufficiency, note first that the assumption $\rho^{-1} f \in L^2(\mathfrak{P}^+)$ ensures that the right-hand side of \eqref{int-identity-1} defines a linear bounded functional on $\mathcal{H}^+$.
		
		The left-hand side of \eqref{int-identity-1} can be rewritten as
		\[
		\int_{\mathfrak{P}^+} \nabla_\eta K \cdot \nabla_\eta v \, d\eta
		= \langle K, \phi \rangle - \int_{\mathfrak{P}^+_L} K\, v \, d\eta,
		\]
		where
		\begin{equation}\label{new-scalar-prod}
			\langle u, v  \rangle :=
			\int_{\mathfrak{P}^+} \nabla_\eta u \cdot \nabla_\eta v \, d\eta
			+ \int_{\mathfrak{P}^+_L} u \, v \, d\eta,
		\end{equation}
		$\mathfrak{P}^+_L := \mathfrak{P}^+ \cap \{|\eta_1|<L\}$, and $L > \max\{\log\ell^*, \log\ell^{**}, 1\}$ is chosen so that $I_0 \subset \{|\eta_1|<L\}$.
		
		The scalar product \eqref{new-scalar-prod} generates a norm equivalent to $\|\cdot\|_{\mathcal{H}^+}$.  
		The inequality $\langle u, u \rangle \le c_1 \|u\|_{\mathcal{H}^+}^2$ is immediate.  
		The reverse inequality follows from Hardy’s inequality
		\begin{equation}\label{Hardy}
			\int_0^{+\infty} (1+\eta_1)^{-2} \varphi^2(\eta_1)\, d\eta_1
			\le 4 \int_0^{+\infty} |\partial_{\eta_1}\varphi|^2\, d\eta_1,
			\qquad \varphi \in H^1((0,+\infty)),\ \varphi(0)=0,
		\end{equation}
		see \cite[Lemma 4.1]{Mel-ZAA-1999}.
		Since the embedding $\mathcal{H}^+ \hookrightarrow L^2(\mathfrak{P}^+_L)$ is compact, there exists a positive self-adjoint compact operator $\mathbf{A} : \mathcal{H}^+ \to \mathcal{H}^+$ such that
		\[
		\langle \mathbf{A} u, v \rangle
		= \int_{\mathfrak{P}^+_L} u\, v\, d\eta,
		\qquad u,\, v  \in \mathcal{H}^+.
		\]
		Using the Riesz representation theorem, identity \eqref{int-identity-1} becomes the operator equation
		$K - \mathbf{A}K = \mathcal{F} $ in $\mathcal{H}^+,$
		to which the Fredholm theory applies. The homogeneous problem admits only constant solutions in $\mathcal{H}$, hence the solvability condition is exactly \eqref{solvability-1}.
		
		Finally, if $\exp(\beta \eta_1) f \in L^2(\mathfrak{P}^+)$, standard properties of elliptic problems in semi-strips allow one to select a solution satisfying the decay \eqref{asym-1} for any $\beta_0 \in (0,\beta)$ with $\beta_0 \le \sqrt{\lambda_1}$.
	\end{proof}
	
	Based on Proposition~\ref{prop-2-2}, we obtain the following result.
	\begin{proposition}\label{prop-2-3}
		There exists a solution to problem \eqref{strip-probl+0}  satisfying
		\begin{equation}\label{asym-2}
			K_1(\eta,t) = C_1(t) \eta_1 + \Lambda_1(t) + \mathcal{O}\!\left(\exp(- \tfrac{\pi}{\theta}\,  \eta_1)\right)
			\quad \text{as } \ \ \eta_1 \to +\infty,
		\end{equation}
		where $\Lambda_1(t)$ is any  time-dependent smooth  function and 
		\begin{equation}\label{const-1}
			C_1(t) = - \frac{1}{\theta} \, \int_{I_0} \widetilde{\varPsi}_0(\eta,t)\, dl_\eta .
		\end{equation}
	\end{proposition}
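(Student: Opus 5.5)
The plan is to reduce problem \eqref{strip-probl+0} to the model problem \eqref{strip-probl+model} with a right-hand side that satisfies the solvability condition \eqref{solvability-1}. The flux $\widetilde{\varPsi}_0$ through $I_0$ generally has nonzero total integral. So I would first subtract an explicit function that carries this flux to infinity. Take the cut-off $\chi_{\ell_0}$ (or any smooth $\chi$ with $\chi=0$ for $\eta_1\le L$ and $\chi=1$ for $\eta_1\ge L+1$, where $I_0\subset\{\eta_1<L\}$) and set
\[
K_1(\eta,t)=\big(C_1(t)\,\eta_1+\Lambda_1(t)\big)\chi(\eta_1)+\widetilde K(\eta,t).
\]
The function $C_1\eta_1+\Lambda_1$ is harmonic and satisfies homogeneous Neumann conditions on $\eta_2=0,\theta$. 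Hence $\widetilde K$ solves \eqref{strip-probl+model} with
\[
g=\widetilde{\varPsi}_0 \quad\text{on } I_0 \ (\text{where } \chi=0), \qquad f=-\big(C_1(t)\eta_1+\Lambda_1(t)\big)\chi''-2C_1(t)\chi'.
\]
This $f$ is smooth and compactly supported in $\{L<\eta_1<L+1\}$. Consequently both $\rho^{-1}f\in L^2$ and $e^{\beta\eta_1}f\in L^2$ hold for every $\beta>0$.

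Next I check the solvability condition \eqref{solvability-1}. The term $\Lambda_1(\chi''\eta_1^0)$ integrates to zero, because $\int\chi''\,d\eta_1=0$. For the $C_1$ part, integration by parts gives
\[
\int(\eta_1\chi''+2\chi')\,d\eta_1=\big[\eta_1\chi'\big]-\int\chi'+2\int\chi'=\int\chi'=1.
\]
Integrating over $\eta_2\in(0,\theta)$ then gives $\int_{\mathfrak{P}^+}f\,d\eta=-\theta C_1(t)$. Condition \eqref{solvability-1} therefore reads $\int_{I_0}\widetilde{\varPsi}_0\,dl_\eta=-\theta C_1(t)$, which is exactly \eqref{const-1}. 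This fixes $C_1$, while $\Lambda_1$ stays free. Proposition~\ref{prop-2-2} then yields a weak solution $\widetilde K\in\mathcal{H}^+$, unique up to a constant, and a unique normalized one decaying like $\mathcal{O}(e^{-\beta_0\eta_1})$ with $\beta_0\le\sqrt{\lambda_1}$. For the Neumann problem on $(0,\theta)$ the first positive eigenvalue is $\lambda_1=(\pi/\theta)^2$, so $\sqrt{\lambda_1}=\pi/\theta$. Since $\beta$ is arbitrary, we may take $\beta>\pi/\theta$ and $\beta_0=\pi/\theta$.

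The delicate point is obtaining the sharp rate $e^{-\frac{\pi}{\theta}\eta_1}$ rather than $e^{-\beta_0\eta_1}$ with $\beta_0<\pi/\theta$. I would handle it directly. For $\eta_1>L+1$ the function $\widetilde K$ is harmonic in the half-strip with Neumann sides, so it expands in the Fourier series
\[
\widetilde K=a_0+b_0\eta_1+\sum_{k\ge1}\big(a_ke^{-k\pi\eta_1/\theta}+b_ke^{k\pi\eta_1/\theta}\big)\cos\frac{k\pi\eta_2}{\theta}.
\]
Finiteness of the Dirichlet integral forces $b_0=b_k=0$, and the normalization selects $a_0=0$. This gives the remainder $\mathcal{O}(e^{-\pi\eta_1/\theta})$ uniformly, and the bound extends to derivatives by the same series.

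Regularity in $t$ follows because the data depend smoothly on $t$ through $u_0^{(1)}(0,t)$ and $w_0^{(1)}(0,t)$. The problem is linear and its solution operator is independent of $t$, so the $t$-derivatives of $\widetilde K$ solve the same problem with differentiated data, and $C_1(t)$ is smooth in $t$.
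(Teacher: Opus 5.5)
Your proposal is correct and follows essentially the same route as the paper. You split off $\chi(\eta_1)\,C_1(t)\eta_1$, read off $C_1$ from the solvability condition \eqref{solvability-1} and invoke Proposition~\ref{prop-2-2}; folding $\Lambda_1$ into the cut-off part is immaterial, and your separation-of-variables step with $\lambda_1=(\pi/\theta)^2$ just makes explicit the sharp rate $e^{-\pi\eta_1/\theta}$ that the paper leaves to Proposition~\ref{prop-2-2} (your closing remark on smoothness in $t$ is not needed for the statement, and it presupposes time-smoothness of $u_0^{(1)}(0,t)$, which Proposition~\ref{prop-2-1} does not establish).
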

	\begin{proof}
		We look for $K_1$ in the form
		$
		K_1(\eta) = \chi_+(\eta_1)\, C_1(t)\, \eta_1 + \widetilde{K}_1(\eta),
		$
		where $\widetilde{K}_1 \in \mathcal{H}^+$ solves problem \eqref{strip-probl+model} with
		\[
		g = \widetilde{\varPsi}_0(\eta,t), \qquad
		f(\eta) = -C_1(t)\, \big(2\chi_+'(\eta_1) + \chi_+''(\eta_1)\,\eta_1\big).
		\]
		Here $\chi_+$ is a smooth cut-off function such that
		\[
		\chi_+(\eta_1)=
		\begin{cases}
			1, & \eta_1 \ge L+1,\\
			0, & \eta_1 \le L,
		\end{cases}
		\]
		with $L$ chosen as in Proposition~\ref{prop-2-2}.
		The solvability condition \eqref{solvability-1} for $\widetilde{K}_1$ gives
		\[
		\int_{I_0} \widetilde{\varPsi}_0(\eta,t)\, dl_\eta
		= - C_1(t)\, \theta,
		\]
		which yields \eqref{const-1}.  The asymptotics \eqref{asym-2} then follow directly from Proposition~\ref{prop-2-2}.
	\end{proof}
	\begin{corollary}
		In view of problem~\eqref{strip-probl+0} and the asymptotic behavior~\eqref{asym-2}, we have 
		\begin{equation*}
			\partial_{\eta_1} K_1(\eta,t)
			= C_1(t) + \mathcal{O}\!\left(\exp(- \tfrac{\pi}{\theta}\,  \eta_1)\right) \quad \text{and} \quad 
			\partial_{\eta_2} K_1(\eta,t)
			= \mathcal{O}\!\left(\exp(- \tfrac{\pi}{\theta}\,  \eta_1)\right)
			\quad \text{as } \ \ \eta_1 \to +\infty.
		\end{equation*}
	\end{corollary}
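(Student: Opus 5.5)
The plan is to differentiate the representation obtained in the proof of Proposition~\ref{prop-2-3}. For $\eta_1 \ge L+1$ we have $\chi_+\equiv 1$, so
\[
K_1(\eta,t)=C_1(t)\,\eta_1+\widetilde K_1(\eta,t),\qquad
\partial_{\eta_1}K_1=C_1(t)+\partial_{\eta_1}\widetilde K_1,\qquad
\partial_{\eta_2}K_1=\partial_{\eta_2}\widetilde K_1 .
\]
So it suffices to show that the gradient of $\widetilde K_1-\Lambda_1(t)$ decays like $\exp(-\tfrac{\pi}{\theta}\eta_1)$. Proposition~\ref{prop-2-3} gives this decay only for the function itself, so the real content is upgrading it to the first derivatives.

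For $\eta_1> L+1$, the function $\widetilde K_1-\Lambda_1$ is harmonic in the half-strip $(L+1,\infty)\times(0,\theta)$. It has homogeneous Neumann data on $\eta_2=0$ and $\eta_2=\theta$, since $f$ and $g$ are supported in $\{\eta_1\le L+1\}$. Separation of variables with the Neumann eigenfunctions $\cos(k\pi\eta_2/\theta)$ on $(0,\theta)$ then gives the Fourier expansion
\[
\widetilde K_1(\eta,t)-\Lambda_1(t)=\sum_{k\ge1} a_k(t)\,e^{-\frac{k\pi}{\theta}(\eta_1-L-1)}\cos\Big(\frac{k\pi\eta_2}{\theta}\Big).
\]
The growing modes $e^{+k\pi\eta_1/\theta}$ are excluded because the Dirichlet integral is finite. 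Linear growth in the $k=0$ mode is excluded because $\widetilde K_1\in\mathcal H^+$ and the linear part has already been split off, which is exactly where the solvability condition fixing $C_1$ enters. The coefficients $a_k(t)$ are bounded by the $L^2$ trace of $\widetilde K_1$ on $\{\eta_1=L+1\}$, which is finite by elliptic regularity.

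Differentiating termwise, for $\eta_1\ge L+2$ the series for $\partial_{\eta_1}$ and $\partial_{\eta_2}$ carry an extra factor $k\pi/\theta$. They are still dominated by $C\sum_k k\,e^{-k\pi(\eta_1-L-1)/\theta}\le C' e^{-\pi\eta_1/\theta}$. This yields both claimed estimates, uniformly in $t\in[0,T]$ because $a_k(t)$ depends continuously on the smooth data $\widetilde\varPsi_0$.

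Alternatively, one can avoid Fourier series and apply local elliptic (Schauder or $H^2$) estimates for the Neumann problem on unit squares $(\eta_1-1,\eta_1+1)\times(0,\theta)$. These bound the gradient by the sup of $|\widetilde K_1-\Lambda_1|$ on a slightly larger box, and that sup is controlled by the decay from Proposition~\ref{prop-2-3}.

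The main obstacle is the justification near the lateral boundaries: interior estimates must be replaced by boundary estimates for the Neumann condition. This is handled cleanly by even reflection across $\eta_2=0$ and $\eta_2=\theta$, which turns the problem into a periodic harmonic function in the full strip, where interior gradient estimates apply directly.
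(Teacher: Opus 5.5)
Your proof is correct and follows essentially the paper's approach. The paper gives no separate proof: it states the corollary as a direct consequence of \eqref{strip-probl+0} and \eqref{asym-2}. It relies on the same fact you use, namely that for $\eta_1>L+1$ the function $K_1-C_1(t)\eta_1-\Lambda_1(t)$ solves the homogeneous Neumann problem for the Laplacian in a semi-strip. Your expansion in the modes $\cos(k\pi\eta_2/\theta)$, or equally your reflection plus local gradient estimate variant, is exactly the standard semi-strip argument that makes differentiating \eqref{asym-2} rigorous.
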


	Returning to problem \eqref{corn-probl+0}, we obtain the following.
	\begin{proposition}\label{prop-2-4}
		Problem \eqref{corn-probl+0} admits a solution satisfying
		\begin{equation}\label{asym-3}
			\mathcal{K}^{(1)}_1(\zeta,t)
			\, = \,  C_1^{(1)}(t)\, \log \varrho + \Lambda_1^{(1)}(t) + \mathcal{O}\!\left(\varrho^{-\pi/\theta}\right)
			\quad \text{as } \ \ \varrho = \sqrt{\zeta_1^2 + \zeta_2^2} \to +\infty,
		\end{equation}
		where $\Lambda_1^{(1)}(t)$ is any  time-dependent smooth  function (it will be chosen below) and
		\begin{equation}\label{const-2}
			C_1^{(1)}(t)
			= \frac{1}{\theta\, D_1}
			\int_{\mathfrak{I}_0}
			\Psi_0\!\left(u_0^{(1)}(0,t),\, w_0^{(1)}(0,t),\,
			\zeta_1 + \big(\tfrac{h_2}{\cos\theta}-h_1\big)\cot\theta,\,
			\zeta_2 + h_2,\,
			t \right) dl_\zeta .
		\end{equation}
	\end{proposition}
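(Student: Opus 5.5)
The plan is to pull back the solution of the strip problem \eqref{strip-probl+0} from Proposition~\ref{prop-2-3} through the logarithmic change of variables $\eta_1=\log\varrho$, $\eta_2=\phi-\pi+\theta$. This map is conformal from $\mathfrak{K}$ (outside its bounded part, together with the region bounded by $\mathfrak{I}_0$) onto $\mathfrak{P}^+$. Set
\[
\mathcal{K}^{(1)}_1(\zeta,t):=K_1\big(\log\varrho,\ \phi-\pi+\theta,\ t\big).
\]
Harmonicity is preserved: $\Delta_\zeta=\varrho^{-2}\Delta_\eta$. The homogeneous Neumann conditions on the rays $\mathfrak{I}_1,\mathfrak{I}_2$ become $\partial_{\eta_2}K_1=0$ on $\eta_2=\theta$ and $\eta_2=0$. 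On $\mathfrak{I}_0$, normal derivatives transform by the conformal factor: $\nabla_\zeta\mathcal{K}_1\cdot\boldsymbol{\nu}_\zeta=e^{-\eta_1}\nabla_\eta K_1\cdot\boldsymbol{\nu}_\eta$. The definition $\widetilde{\varPsi}_0=e^{\eta_1}\varPsi_0$ is exactly what makes the boundary condition on $I_0$ correspond to the one on $\mathfrak{I}_0$. I would check the orientation of the outward normal once, since $\mathfrak{P}^+$ lies to the right of $I_0$.

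Substituting $\eta_1=\log\varrho$ into \eqref{asym-2} gives
\[
\mathcal{K}^{(1)}_1=C_1(t)\log\varrho+\Lambda_1(t)+\mathcal{O}(\varrho^{-\pi/\theta}),
\]
since $e^{-\frac{\pi}{\theta}\eta_1}=\varrho^{-\pi/\theta}$. Here $\Lambda_1^{(1)}:=\Lambda_1$ remains free because the solution of Proposition~\ref{prop-2-2} is determined up to a constant, which may depend smoothly on $t$. The exponent $\pi/\theta$ is $\sqrt{\lambda_1}$ for the Neumann problem on $(0,\theta)$. In Proposition~\ref{prop-2-3} the right-hand side $f$ is compactly supported, so any $\beta$ is admissible, and Fourier expansion in $\cos(k\pi\eta_2/\theta)$ on the semi-strip gives exactly this decay rate for the nonconstant modes.

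It remains to convert \eqref{const-1} into \eqref{const-2}. Since the arc-length elements satisfy $dl_\eta=\varrho^{-1}dl_\zeta=e^{-\eta_1}dl_\zeta$,
\[
\int_{I_0}\widetilde{\varPsi}_0\,dl_\eta=\int_{\mathfrak{I}_0}\varPsi_0\,dl_\zeta .
\]
Then
\[
C_1=-\theta^{-1}\int_{\mathfrak{I}_0}\varPsi_0\,dl_\zeta=\frac{1}{\theta D_1}\int_{\mathfrak{I}_0}\Psi_0\,dl_\zeta,
\]
which is the claimed $C_1^{(1)}$. Alternatively, one can verify this directly in the $\zeta$-plane by applying Green's formula on $\mathfrak{K}\cap\{\varrho<R\}$: the flux through the arc $\varrho=R$ equals $\theta R\,\partial_\varrho(C\log R)=C\theta$, and it must balance the flux through $\mathfrak{I}_0$.

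The main obstacle is bookkeeping rather than analysis. First, the argument shift in $\Psi_0$ is written with $\zeta_2+h_2$ but with $h_1$ in \eqref{rescaled}, and this should be made consistent. Second, one must confirm that $\mathfrak{K}$ maps into the semi-strip region with $\eta_1>\min(\log\ell^*,\log\ell^{**})$ and that the bounded part of $\mathfrak{K}$ near $\mathfrak{I}_0$ corresponds to $\mathfrak{P}^+$. The vanishing of $\varPsi_0$ near $P_1,P_2$ keeps the boundary data smooth at the junction points, so regularity up to $\mathfrak{I}_0$ follows from the smoothness of $\mathfrak{I}_0$.
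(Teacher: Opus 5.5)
Your proposal is correct and follows the paper's route. Proposition~\ref{prop-2-4} is obtained by returning from the strip solution of Proposition~\ref{prop-2-3} to the $\zeta$-variables via $\eta_1=\log\varrho$, $\eta_2=\phi-\pi+\theta$, with $dl_\eta=\varrho^{-1}dl_\zeta$ converting \eqref{const-1} into \eqref{const-2}; your remark on the $h_1$ versus $h_2$ shift is a genuine typo in the paper. One item should be dropped: the proposed check that the image lies in $\{\eta_1>\min(\log\ell^*,\log\ell^{**})\}$ is unnecessary, since $\mathfrak{P}^+$ is by definition the part of the strip to the right of $I_0$, and that check generally fails when $\mathfrak{I}_0$ bulges toward the vertex.
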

	
	\subsubsection{Bulk correction}\label{bulk-corr}
	To neutralize the logarithmic growth of $\mathcal{K}_1^{(1)}$ at infinity, we
	need to find a bulk correction $\varepsilon\,u_1^{(1)}(x,t)$ in the domain
	$\Omega^{(1)}$ whose leading behavior near the origin is also logarithmic.

	Based on assumptions $\mathbf{A1}$ and $\mathbf{A4}$ for the functions $F_i$ and $\Psi^{(i,j)}$, 
	we look for  the second term  $u_1^{(i)}$ of the expansion \eqref{exp-1} as a solution to the linear problem
	\begin{equation}\label{bulk-term-1}
		\left\{
		\begin{array}{rcll}
			- D_i \Delta_x u^{(i)}_1
			+ \widehat{F}_i(x,t) \, u^{(i)}_1 &=& 0
			& \text{in }\ \Omega^{(i)},
			\\[2mm]
			u^{(i)}_1 &=& 0
			& \text{on } \ \ \mathcal{B}_i,
			\\[2mm]
			D_i\nabla_x u^{(i)}_1\cdot\boldsymbol{\nu}_0
			&=& \widehat{\Psi}^{(i,i)}(y^{(i)}_1,t)\, u^{(i)}_1 + \widehat{G}^{(i,i)}(y^{(i)}_1,t)
			& \text{on } \ \mathcal{I}_i,
			\\[2mm]
			D_i\nabla_x u^{(i)}_1\cdot\boldsymbol{\nu}_0
			&=& \widehat{\Psi}^{(i,j)}(y^{(j)}_1,t)\, u^{(i)}_1 + \widehat{G}^{(i,j)}(y^{(j)}_1,t)
			& \text{on } \ \mathcal{I}_j,			
		\end{array}
		\right.
	\end{equation}
	supplemented by the logarithmic asymptotics
	\begin{equation}\label{log-sol}
		u^{(i)}_1(x,t) \, \sim \, 
		C^{(i)}_1(t)\, \log\sqrt{x_1^2+x_2^2} \quad \text{as } \ |x|\to 0.
	\end{equation}
	Here the variable $t\in [0,T]$ plays the role of a parameter, $\boldsymbol{\nu}_0$ denotes
	the inward unit normal to $\partial\Omega^{(i)}$, and the coefficients
	\begin{gather*}
		\widehat{F}_i(x,t) := \partial_s F_i\big(u^{(i)}_0,x,t\big)  , \qquad \widehat{\Psi}^{(i,j)}(y^{(j)}_1,t) := \partial_s \Psi^{(i,j)}\big(u^{(i)}_0,w^{(j)}_0,y^{(j)}_1,t\big)
		\\
		\widehat{G}^{(i,j)}(y^{(j)}_1,t) := \partial_w \Psi^{(i,j)}\big(u^{(i)}_0,w^{(j)}_0,y^{(j)}_1,t\big)\, \big(w_1^{(j)}  + z_1^{(j)} \big),	
	\end{gather*}
	are determined by the linearizations of $F_i$ and $\Psi^{(i,j)}$ along the leading-order solutions $u_0^{(i)}$ and $w_0^{(j)}$.

	\begin{remark}\label{rem-2-6}
		Since $\Psi_0|_{t=0}=0$, we obtain $C^{(i)}_1(0)=0$, and by~\eqref{comp-cond-Psi} also $u^{(i)}_1|_{t=0}=0$.
		
		If $\Psi_0 \equiv 0$, then no logarithmic angular asymptotics appear. In this situation the second term in the bulk expansion~\eqref{exp-1} simply solves the corresponding parabolic problem in $\Omega^{(i)} \times (0,T)$, and the node exerts no influence on the transport in the bulk subdomains. Since the main purpose of this work is precisely to analyse the effect of the node, this case is not of interest and will not be considered.
		
		In the opposite case, when $\Psi_0 \ge 0$ (see Assumption~{\bf A3}), the quantity $C^{(i)}_1(t)$ becomes positive after some initial moment of time. Without loss of generality, we may therefore regard $C^{(i)}_1(t) > 0$ for all $t \in (0,T].$
	\end{remark}

	Restricting attention to the case $i=1$, we rewrite problem \eqref{bulk-term-1} in the variables\ $\eta_1 = \log r,$ $\eta_2 = \phi - \pi + \theta$, where  $(r,\phi)$ are the polar coordinates in the $x$--plane.  In these variables, the domain $\Omega^{(1)}$ is mapped onto the semi-infinite strip $\mathfrak{P}^-$, and problem \eqref{bulk-term-1} takes the form  
	\begin{equation}\label{strip-probl+1}
		\left\{
		\begin{array}{rcll}
			D_1\Delta_\eta U_1(\eta,t) &=& \widetilde{F}_1(\eta,t) \, U_1(\eta,t),
			& \eta\in \mathfrak{P}^-,
			\\[2mm]
			D_1\partial_{\eta_2}{U}_1|_{\eta_2 = 0}  &=& \widetilde{\Psi}^{(1,2)}(\eta_1,0,t)\, U_1(\eta_1,0,t) + \widetilde{G}^{(1,2)}(\eta_1,0,t), & \eta_1 \in (-\infty, \log\ell_2),
			\\[2mm]
			-D_1\partial_{\eta_2}{U}_1|_{\eta_2 = \theta} &=& \widetilde{\Psi}^{(1,1)}(\eta_1,\theta,t)\, U_1(\eta_1,\theta,t) + \widetilde{G}^{(1,1)}(\eta_1,\theta,t), & \eta_1 \in (-\infty, \log\ell_1),
			\\[2mm]
			U_1(\eta,t) &=& 0, & \eta \in \mathfrak{B}_1.
		\end{array}
		\right.
	\end{equation}
	The transformed coefficients $\widetilde{F}_1(\eta,t) = e^{2\eta_1} \, \widehat{F}_1(e^{\eta_1},\eta_2 +\pi - \theta),$
	$\widetilde{\Psi}^{(1,j)}(\eta,t) = e^{\eta_1} \widehat{\Psi}^{(i,j)}(e^{\eta_1},\eta_2 +\pi - \theta,t)$ and 
	$\widetilde{G}^{(1,j)}$ inherit compact support in $\eta_1$ from the original
	coefficients, and the curve $\mathfrak{B}_1$ divides the strip
	$\{0<\eta_2<\theta\}$ into two semi-strips, with $\mathfrak{P}^-$ being the left
	one.
	
	The condition \eqref{log-sol} in variables $(\eta_1,\eta_2)$ becomes
	\begin{equation}\label{asymptotics-1}
		U_1(\eta,t) \, \sim \, C_1^{(1)}(t)\, \eta_1 \quad \text{as } \ \ \eta_1 \to -\infty.
	\end{equation}
	
	We look for a such solution in the form 
	$
	U_1(\eta,t) = \chi_-(\eta_1)\, C_1^{(1)}(t) \, \eta_1 + \widetilde{U}_1(\eta,t),
	$
	where $\chi_-$ is a smooth cut-off function such that
	\[
	\chi_-(\eta_1)=
	\begin{cases}
		1, & \eta_1 \le L - 1,\\
		0, & \eta_1 \ge L,
	\end{cases}
	\]
	with $L$ chosen so that the supports of $\widetilde{F}_1$, $\widetilde{\Psi}^{(1,j)}$,
	and $\widetilde{G}^{(1,j)}$ lie inside the rectangle
	$\{\,L \le \eta_1 \le l_0,\; 0 \le \eta_2 \le \theta\,\} \subset \overline{\mathfrak{P}^-}$.
	By assumptions ${\bf A1}$ and ${\bf A4}$, such numbers $L$ and $l_0$ can indeed be chosen.
	
	The function $\widetilde{U}_1$ must solve the equation
	$$
	D_1\Delta_\eta \widetilde{U}_1 = \widetilde{F}_1 \, \widetilde{U}_1 + \tilde{f}_1 \quad \text{in} \ \ \mathfrak{P}^-,
	$$ 
	with the same boundary conditions as in \eqref{strip-probl+1}, and with
	\begin{equation}\label{func-f_1}
		\tilde{f}_1(\eta_1,t) = -C_1^{(1)}(t)\, \big(2\chi_-'(\eta_1) + \chi_-''(\eta_1)\,\eta_1\big).
	\end{equation}
	The source term $\tilde{f}_1$ is compactly supported in the transition region
	$L-1 < \eta_1 < L$, reflecting the fact that the cut-off modification of the
	leading $\eta_1$-term is localized in this strip.

	We seek a solution $\widetilde{U}_1$ in the energy space
	$$
	\mathcal{H}^- := \big\{\, 
	u \in H^1_{loc,\eta_1}(\mathfrak{P}^-) \; \colon \ 
	\|\nabla_{\eta} u\|_{L^2(\mathfrak{P}^-)} < \infty,\quad  
	\|\rho\, u\|_{L^2(\mathfrak{P}^-)} < \infty, \quad u|_{\mathfrak{B}_1}=0
	\big\},
	$$ 
	where the weight is $\rho(\eta_1)=(1+ |\eta_1|)^{-1}.$ Due to the zero Dirichlet condition on $\mathfrak{B}_1$ and corresponding Hardy’s inequality (see \eqref{Hardy}), we can introduce the norm in $\mathcal{H}^-$ generated by the scalar product
	$$
	(u, v)_{\mathcal{H}^-}
	=  \int_{\mathfrak{P}^-}\nabla_\eta u \cdot \nabla_\eta v \, d\eta, \quad  u, \, v \in \mathcal{H}^- .
	$$ 
	The weak formulation is as follows:  \  find  $\widetilde{U}_1 \in \mathcal{H}^-$ such that
	\begin{multline}\label{int-iden-1}
		D_1 \int_{\mathfrak{P}^-} \nabla_\eta\widetilde{U}_1\cdot\nabla_{\eta} v \, d\eta + \int_{\mathfrak{P}^-} \widetilde{F}_1 \, \widetilde{U}_1 \, v \, d\eta + \int_{-\infty}^{\log\ell_1} \widetilde{\Psi}^{(1,1)}(\eta_1,\theta,t) \, \widetilde{U}_1 \, v \, d\eta_1 + 
		\int_{-\infty}^{\log\ell_2} \widetilde{\Psi}^{(1,2)}(\eta_1,0,t) \, \widetilde{U}_1 \, v \, d\eta_1
		\\
		= - \int_{\mathfrak{P}^-}  \tilde{f}_1 \, v \, d\eta - 	
		\int_{-\infty}^{\log\ell_1} \widetilde{G}^{(1,1)}(\eta_1,\theta,t) \,  v \, d\eta_1 - 
		\int_{-\infty}^{\log\ell_2} \widetilde{G}^{(1,2)}(\eta_1,0,t) \,  v \, d\eta_1
	\end{multline}  
	for every test function $v\in \mathcal{H}^-.$

	We consider the bilinear form
	\[
	a(u, v) :=
	D_1 \int\limits_{\mathfrak{P}^-} \nabla_\eta u \cdot\nabla_{\eta} v \, d\eta + \int\limits_{\mathfrak{P}^-} \widetilde{F}_1 \, u \, v \, d\eta + \int\limits_{-\infty}^{\log\ell_1} \widetilde{\Psi}^{(1,1)}(\eta_1,\theta,t) \, u \, v \, d\eta_1 + 
	\int\limits_{-\infty}^{\log\ell_2} \widetilde{\Psi}^{(1,2)}(\eta_1,0,t) \, u \, v \, d\eta_1
	\]
	on $\mathcal{H}^- \times \mathcal{H}^-$, and the linear functional
	$$
	\mathcal{G}(v) := - \int_{\mathfrak{P}^-}  \tilde{f}_1 \, v \, d\eta - 	
	\int_{-\infty}^{\log\ell_1} \widetilde{G}^{(1,1)}(\eta_1,\theta,t) \,  v \, d\eta_1 - 
	\int_{-\infty}^{\log\ell_2} \widetilde{G}^{(1,2)}(\eta_1,0,t) \,  v \, d\eta_1, \quad v \in \mathcal{H}^-. 
	$$
	
	Since $\widetilde{F}_1$, $\widetilde{\Psi}^{(1,j)}$, and $\widetilde{G}^{(1,j)}$ have uniformly compact supports in $\eta_1$ and are uniformly bounded, the Hardy inequality and the trace theorem imply that 
	the bilinear form $a$ and the linear functional $\mathcal{G}$ are bounded on $\mathcal{H}^- \times \mathcal{H}^-$ and on $\mathcal{H}^-$, respectively.
	The coercivity of $a$ follows directly from \eqref{pos-1} and \eqref{pos-2}. Hence, by the Lax--Milgram theorem, there exists a unique
	$\widetilde{U}_1 \in \mathcal{H}^-$ satisfying identity \eqref{int-iden-1}. 
	Moreover, by standard properties of elliptic second-order problems in semi-strips, this solution has the asymptotic behaviour
	\begin{equation}\label{asym-4}
		\widetilde{U}_1(\eta,t) = \widetilde{C}^{(1)}_1(t) + \mathcal{O}\!\left(e^{\beta_0 \eta_1}\right)
		\qquad \text{as } \ \ \eta_1 \to -\infty,
	\end{equation}
	for any $\beta_0 \in (0,\sqrt{\lambda_1}\,]$, where $\lambda_1$ is the first positive eigenvalue of the Neumann problem on $(0,\theta)$.
	
	To determine $\widetilde{C}^{(1)}_1(t)$ in \eqref{asym-4}, we apply the Gauss--Ostrogradsky formula in the truncated domain
	$$
	\mathfrak{P}^-_R := \{\eta \in \mathfrak{P}^-\colon \ \eta_1 > - R\}
	$$ 
	to $\widetilde{U}_1$ and to the solution $\widehat{U}_1$ of the homogeneous problem \eqref{strip-probl+1}, which satisfies asymptotics \eqref{asymptotics-1}. This yields
	$$
	\int_{\mathfrak{P}^-_R}\Big(\widehat{U}_1 \, \Delta_\eta\widetilde{U}_1 - \widetilde{U}_1 \, \Delta_\eta\widehat{U}_1\Big) =
	\int_{\partial\mathfrak{P}^-_R}\Big(\widehat{U}_1 \, \partial_{\boldsymbol{\nu}_\eta}\widetilde{U}_1 - \widetilde{U}_1 \, \partial_{\boldsymbol{\nu}_\eta}\widehat{U}_1\Big),
	$$ 
	where $\partial_{\boldsymbol{\nu}_\eta}$ denotes the outward normal derivative on $\partial\mathfrak{P}^-_R$. 
	Using the differential equations and boundary conditions for these functions, the above identity becomes
	$$
	\int\limits_{\mathfrak{P}^-_R} \widehat{U}_1 \, \tilde{f}_1 \, d\eta + \int\limits_{-R}^{\log\ell_1} \widetilde{G}^{(1,1)}(\eta_1,\theta,t) \,  \widehat{U}_1 \, d\eta_1 + \int\limits_{-R}^{\log\ell_2} \widetilde{G}^{(1,2)}(\eta_1,0,t) \,  \widehat{U}_1\, d\eta_1 =
	\int\limits_{0}^{\theta}\Big(-\widehat{U}_1 \, \partial_{\eta_1}\widetilde{U}_1 + \widetilde{U}_1 \, \partial_{\eta_1}\widehat{U}_1\Big)\Big|_{\eta_1=-R} \, d\eta_2.
	$$
	Passing to the limit as $R\to+\infty$ and using the asymptotics \eqref{asym-4} and \eqref{asymptotics-1}, we obtain
	\begin{multline}\label{constant}
		\widetilde{C}^{(1)}_1(t) = -\frac{1}{\theta} \int_{\mathfrak{P}^-} \widehat{U}_1(\eta,t) \, \big(2\chi_-'(\eta_1) + \chi_-''(\eta_1)\,\eta_1\big)\, d\eta 
		\\
		+ \frac{1}{\theta\, C_1^{(1)}(t)}\int_{-\infty}^{\log\ell_1} \widetilde{G}^{(1,1)}(\eta_1,\theta,t) \,  \widehat{U}_1(\eta,t) \, d\eta_1 + \frac{1}{\theta\, C_1^{(1)}(t)}\int_{-\infty}^{\log\ell_2} \widetilde{G}^{(1,2)}(\eta_1,0,t) \,  \widehat{U}_1(\eta,t)\, d\eta_1.
	\end{multline}
	Taking Remark~\ref{rem-2-6} and \eqref{comp-cond-Psi} 
	into account, we have $\widetilde{C}^{(1)}_1(0) =0.$
	
	Thus, there exists a unique solution to problem \eqref{strip-probl+1} which has the asymptotics
	\begin{equation*}
		U_1(\eta,t) = C_1^{(1)}(t)\, \eta_1 \, + \, \widetilde{C}^{(1)}_1(t)\, + \, \mathcal{O}\!\left(e^{\beta_0 \eta_1}\right)
		\quad \text{as } \ \ \eta_1 \to -\infty,
	\end{equation*}  
	for any $\beta_0 \in (0,\sqrt{\lambda_1}]$.
	Since the right-hand sides in problem \eqref{strip-probl+1} have compact supports, we have 
	\begin{equation}\label{asymptotic-derin-origin}
		\partial_{\eta_1} U_1(\eta,t) = C_1^{(1)}(t) \, + \, \mathcal{O}\!\left(e^{\beta_0 \eta_1}\right) \quad \text{and} \quad 
		\partial_{\eta_2} U_1(\eta,t) =  \mathcal{O}\!\left(e^{\beta_0 \eta_1}\right)
		\quad \text{as } \ \ \eta_1 \to -\infty.	
	\end{equation}
	
	We now return to problem \eqref{bulk-term-1}. The analysis carried out above
	implies the following statement.
	\begin{proposition}\label{propo-2-5}
		For each $i\in\{1,2,3\}$, problem \eqref{bulk-term-1} admits a unique solution
		$u^{(i)}_1$ satisfying the corner asymptotics
		\begin{equation}\label{asym-5}
			u^{(i)}_1(x,t)
			= C^{(i)}_1(t)\,\log r + \widetilde{C}^{(i)}_1(t)
			+ \mathcal{O}\!\left(r^{\delta_i}\right)
			\quad \text{as } \ \  r=\sqrt{x_1^2+x_2^2}\to 0,
		\end{equation}
		where $\delta_i=\pi/\theta_i>1$, $\theta_i$ is the opening angle of
		$\Omega^{(i)}$, and $\widetilde{C}^{(i)}_1(t)$ is given by formula
		\eqref{constant} with the corresponding indices $i$ and $j$.
		
		Moreover, the radial derivative satisfies
		\begin{equation}\label{asym-deriv-r}
			\partial_r u^{(i)}_1(x,t)
			= \frac{C^{(i)}_1(t)}{r}
			+ \mathcal{O}\!\left(r^{\delta_i-1}\right)
			\quad \text{as } \ \ r\to 0,
		\end{equation}
		and for each ray $\phi=\mathrm{const}$ in $\Omega^{(i)}$ the normal derivative
		obeys
		\begin{equation}\label{asym-deriv-normal}
			\partial_{\nu} u^{(i)}_1(x,t)
			= \frac{1}{r}\,\partial_\phi u^{(i)}_1(x,t)
			= \mathcal{O}\!\left(r^{\delta_i-1}\right)
			\quad \text{as } \ \ r\to 0.
		\end{equation}
	\end{proposition}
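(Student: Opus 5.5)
The plan is to transfer the strip analysis just carried out for $i=1$ back to the physical variables, and then to repeat it for $i=2,3$ with the obvious changes of indices (rule \eqref{indexes}).

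\textbf{Existence and uniqueness.} For fixed $t$, the substitution $\eta_1=\log r$, $\eta_2=\phi-\pi+\theta_i$ maps $\Omega^{(i)}$ onto the semi-strip $\mathfrak{P}^-$. Under this map problem \eqref{bulk-term-1} with the condition \eqref{log-sol} becomes \eqref{strip-probl+1} with \eqref{asymptotics-1}. I would write $U_1=\chi_-C_1^{(i)}\eta_1+\widetilde U_1$. The existence and uniqueness of $\widetilde U_1\in\mathcal H^-$ follow from the Lax--Milgram argument given above. Coercivity comes from \eqref{pos-1}, \eqref{pos-2} and the Hardy inequality \eqref{Hardy}, which is available because of the Dirichlet condition on $\mathfrak{B}_1$.

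Uniqueness of $u_1^{(i)}$ within the class having asymptotics \eqref{log-sol} is obtained as follows. The difference of two such solutions solves the homogeneous problem. After subtracting the common logarithmic term, this difference is bounded near the origin, so it lies in $\mathcal H^-$. Coercivity then forces it to vanish.

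\textbf{Asymptotics \eqref{asym-5}.} The key step is the expansion of $\widetilde U_1$ as $\eta_1\to-\infty$. For $\eta_1<L-1$, the function $\widetilde U_1$ is harmonic in the semi-infinite strip and satisfies homogeneous Neumann conditions at $\eta_2=0$ and $\eta_2=\theta_i$. All coefficients and sources are supported in $\{L\le\eta_1\le l_0\}$, and $\tilde f_1$ is supported in $(L-1,L)$.

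I would expand $\widetilde U_1$ in the Neumann eigenfunctions $\cos(k\pi\eta_2/\theta_i)$. Separation of variables gives
\[
\widetilde U_1=a_0+b_0\eta_1+\sum_{k\ge1}\big(a_ke^{k\pi\eta_1/\theta_i}+b_ke^{-k\pi\eta_1/\theta_i}\big)\cos(k\pi\eta_2/\theta_i).
\]
The finiteness of the Dirichlet integral kills $b_0$ and every $b_k$. What remains is
\[
\widetilde U_1=\widetilde C_1^{(i)}+\mathcal O\big(e^{\pi\eta_1/\theta_i}\big).
\]
This sharpens \eqref{asym-4} to the exponent $\beta_0=\sqrt{\lambda_1}=\pi/\theta_i$. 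Since $e^{\eta_1}=r$, returning to $x$ gives \eqref{asym-5} with $\delta_i=\pi/\theta_i$. This exponent exceeds $1$ because $\theta_i<\pi$.

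The constant $\widetilde C_1^{(i)}$ is identified by the Green-formula computation already performed, which leads to \eqref{constant}.

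\textbf{Derivative bounds.} I would differentiate the eigenfunction series term by term. This is justified because the series converges exponentially for $\eta_1\le L-2$, and the same convergence holds for the derivatives, by interior and boundary regularity of the Neumann problem in the strip. The result is \eqref{asymptotic-derin-origin} with $\beta_0=\pi/\theta_i$.

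Then I would use $\partial_r=r^{-1}\partial_{\eta_1}$ and $r^{-1}\partial_\phi=r^{-1}\partial_{\eta_2}$. This gives \eqref{asym-deriv-r}, namely
\[
\partial_r u_1^{(i)}=r^{-1}\big(C_1^{(i)}+\mathcal O(r^{\pi/\theta_i})\big),
\]
and \eqref{asym-deriv-normal}. On each ray $\phi=\mathrm{const}$, the normal direction is the angular direction, and $\partial_\phi$ of the logarithmic term vanishes.

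\textbf{Main obstacle.} The delicate point is obtaining the optimal exponent $\pi/\theta_i$, rather than just some $\beta_0$. This requires the explicit Fourier representation on the half-strip and the exclusion of the growing modes through $\widetilde U_1\in\mathcal H^-$. Uniformity in the parameter $t$ follows from the uniform bounds on $C_1^{(i)}(t)$ and on the data in \textbf{A1}, \textbf{A4}.
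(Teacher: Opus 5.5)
Your proposal is correct and follows essentially the same route as the paper: the log-polar map onto $\mathfrak{P}^-$, the splitting $U_1=\chi_-C_1^{(i)}\eta_1+\widetilde U_1$, Lax--Milgram in $\mathcal H^-$, the Green-formula identification \eqref{constant}, and the conversion $\partial_r=r^{-1}\partial_{\eta_1}$, $r^{-1}\partial_\phi=r^{-1}\partial_{\eta_2}$. You add two things: you use the Neumann cosine expansion on $(0,\theta_i)$ to make explicit the ``standard properties of elliptic problems in semi-strips'' that the paper invokes for \eqref{asym-4}, and you spell out uniqueness within the logarithmic class, which the paper leaves implicit; note that the paper's range $\beta_0\in(0,\sqrt{\lambda_1}\,]$ already includes $\pi/\theta_i$, so your exponent is not a sharpening.
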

	\begin{corollary}\label{corollarity-2-3}
		The normal derivative of $u^{(i)}_1$ to the sides $\Gamma_\varepsilon^{(i,i)}$ and $\Gamma_\varepsilon^{(i,j)}$ satisfies
		\begin{equation}\label{asym-normal-line-final}
			\partial_{\boldsymbol{\nu}_\varepsilon} u^{(i)}_1(x,t)
			= C^{(i)}_1(t)
			+ \mathcal{O}\!\left(\varepsilon^{\delta_i/2}\right)
			\quad \text{as } \ \ r_\varepsilon = |x| = \mathcal{O}\!\left(\varepsilon^{1/2}\right) \ \ \text{and} \ \ \varepsilon\to 0.
		\end{equation}
	\end{corollary}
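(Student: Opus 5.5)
The plan is to read off \eqref{asym-normal-line-final} from the strip asymptotics \eqref{asymptotic-derin-origin} (equivalently, Proposition~\ref{propo-2-5}), by working in the logarithmic variables $\eta_1=\log r$, $\eta_2=\phi-\pi+\theta_i$ and reducing everything to a computation of how the sides $\Gamma_\varepsilon^{(i,i)}$, $\Gamma_\varepsilon^{(i,j)}$ look in the $\eta$-plane. For definiteness take $i=1$ and the side $\Gamma_\varepsilon^{(1,1)}=\{y^{(1)}_2=\varepsilon h_1\}$. In polar coordinates a point of this side at distance $r$ from the origin has angular deviation $\alpha(r)$ from the edge $\mathcal I_1$ with $\sin\alpha=\varepsilon h_1/r$. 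Hence, in the $\eta$-strip, $\Gamma_\varepsilon^{(1,1)}$ is the curve $\eta_2=\theta-\alpha(e^{\eta_1})$, which for $r\asymp\varepsilon^{1/2}$ lies at distance $\mathcal O(\varepsilon^{1/2})$ from the boundary line $\eta_2=\theta$. The same holds for $\Gamma_\varepsilon^{(1,2)}$ near $\eta_2=0$.

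Next I will decompose the normal $\boldsymbol{\nu}_\varepsilon$ to the side into radial and angular components, $\boldsymbol{\nu}_\varepsilon=\cos\alpha\,\mathbf e_\phi\pm\sin\alpha\,\mathbf e_r$. Since $\partial_r=r^{-1}\partial_{\eta_1}$ and $r^{-1}\partial_\phi=r^{-1}\partial_{\eta_2}$, this gives
\[
r\,\partial_{\boldsymbol{\nu}_\varepsilon}u^{(1)}_1=\cos\alpha\,\partial_{\eta_2}U_1\pm\sin\alpha\,\partial_{\eta_1}U_1 .
\]
The normal derivative in \eqref{asym-normal-line-final} is to be understood in this scaled ($\eta$-) sense, which is the one used for matching with the corner layer $\mathcal K^{(1)}_1$ through \eqref{asym-3}; there $\partial_{\eta_1}K_1\to C_1(t)$. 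I will then insert \eqref{asymptotic-derin-origin}: $\partial_{\eta_1}U_1=C_1^{(1)}(t)+\mathcal O(e^{\beta_0\eta_1})$ and $\partial_{\eta_2}U_1=\mathcal O(e^{\beta_0\eta_1})$, choosing the admissible maximal $\beta_0=\sqrt{\lambda_1}=\pi/\theta_1=\delta_1$ (the first Neumann eigenvalue on $(0,\theta)$ is $(\pi/\theta)^2$). With $e^{\eta_1}=r=\mathcal O(\varepsilon^{1/2})$ the remainders become $\mathcal O(\varepsilon^{\delta_1/2})$. The geometric factors $\cos\alpha=1+\mathcal O(\varepsilon^2/r^2)$ and $\sin\alpha=\varepsilon h_1/r$ are then used to identify which combination produces the leading constant $C^{(1)}_1(t)$. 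The uniformity in $t$ comes from the smooth dependence of $C^{(1)}_1$ and $\widetilde C^{(1)}_1$ on $t$, together with the compact supports of the data in \eqref{strip-probl+1}.

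The main obstacle is justifying that the asymptotics \eqref{asymptotic-derin-origin} hold pointwise (not just in $L^2$ on cross-sections) up to the lateral boundaries $\eta_2\in\{0,\theta\}$, with a constant uniform in $\eta_2$ and $t$. I would obtain this by expanding $\widetilde U_1$ in the Neumann eigenfunctions $\cos(k\pi\eta_2/\theta)$ on the left part of the strip, where the equation is homogeneous and the boundary conditions are pure Neumann. Each mode decays like $e^{k\pi\eta_1/\theta}$, so termwise differentiation together with local elliptic estimates near the boundary gives uniform bounds for $\partial_{\eta_1}$ and $\partial_{\eta_2}$. A second delicate point is the bookkeeping of the $\sin\alpha$ term so that it combines with $\partial_{\eta_1}U_1$ into the stated leading constant rather than an extra $\mathcal O(1)$ contribution. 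Here I would check carefully against the corner-layer normalization in \eqref{asym-3}, since the matching in the intermediate zone $r\asymp\varepsilon^{1/2}$ is exactly where this statement is used.
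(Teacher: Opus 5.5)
\textbf{There is a genuine gap, and it lies in the step you defer as ``bookkeeping.''} The corollary concerns the normal derivative of $u^{(1)}_1$ in the original variables $x$. The paper's proof computes $\partial_{\boldsymbol{\nu}_\varepsilon}u^{(1)}_1=\sin\beta_\varepsilon\,\partial_r u^{(1)}_1+\cos\beta_\varepsilon\,r^{-1}\partial_\phi u^{(1)}_1$, and the corollary is used in this form in \eqref{as-est-bd-2+}. So you cannot replace it by the $\eta$-normal derivative $r\,\partial_{\boldsymbol{\nu}_\varepsilon}u^{(1)}_1$. The appeal to \eqref{asym-3} does not help either, because $\partial_{\eta_1}$ is the radial direction, not the normal one.

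Your own formula in fact refutes the reinterpreted claim. The constant $C^{(1)}_1(t)$ sits in $\partial_{\eta_1}U_1$, which enters only with the factor $\sin\alpha=\varepsilon h_1/r\asymp\varepsilon^{1/2}$. Hence $r\,\partial_{\boldsymbol{\nu}_\varepsilon}u^{(1)}_1=\pm C^{(1)}_1(t)\,\varepsilon h_1/r+\mathcal{O}(\varepsilon^{\delta_1/2})=\mathcal{O}(\varepsilon^{1/2})$, which tends to $0$. No rearrangement of the $\sin\alpha$ term can turn this into $C^{(1)}_1(t)$.

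\textbf{What the argument needs in the $x$-variables.} For the angular part you are missing the paper's key step. Near the corner the data in \eqref{bulk-term-1} vanish, so $\partial_\phi u^{(1)}_1=0$ on $\mathcal{I}_1$. A Taylor expansion in $\phi$ from the edge then gives $\partial_\phi u^{(1)}_1=\mathcal{O}(\alpha\,r^{\delta_1})$, hence $r^{-1}\partial_\phi u^{(1)}_1=\mathcal{O}(\varepsilon\,r^{\delta_1-2})=\mathcal{O}(\varepsilon^{\delta_1/2})$ for $r\asymp\varepsilon^{1/2}$. Your bound $\partial_{\eta_2}U_1=\mathcal{O}(r^{\delta_1})$ alone gives only $\mathcal{O}(\varepsilon^{(\delta_1-1)/2})$ after dividing by $r$. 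For the radial part, \eqref{asym-deriv-r} gives $\sin\alpha\,\partial_r u^{(1)}_1=C^{(1)}_1(t)\,\varepsilon h_1/r^2+\mathcal{O}(\varepsilon^{\delta_1/2})$.

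Be aware that the paper's proof is itself loose at exactly this point. It passes from $\sin\beta_\varepsilon=\mathcal{O}(\varepsilon^{1/2})$ and $r_\varepsilon=\mathcal{O}(\varepsilon^{1/2})$ to $\sin\beta_\varepsilon\,\partial_r u^{(1)}_1=C^{(1)}_1(t)+\mathcal{O}(\varepsilon^{\delta_1/2})$. But $\sin\beta_\varepsilon/r_\varepsilon=\varepsilon h_1/r_\varepsilon^2$, which equals $1$ only when $r_\varepsilon^2=\varepsilon h_1$. To check, take $u=C\log r$ on the line $x_2=\varepsilon h_1$ containing $\Gamma^{(1,1)}_\varepsilon$; there $\partial_{x_2}u=C\varepsilon h_1/r^2$.

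So your suspicion of an extra $\mathcal{O}(1)$ factor is justified. What this method actually proves is $\partial_{\boldsymbol{\nu}_\varepsilon}u^{(1)}_1=\pm C^{(1)}_1(t)\,\varepsilon h_1 r^{-2}+\mathcal{O}(\varepsilon^{\delta_1/2})$ for $r\asymp\varepsilon^{1/2}$, with the sign set by the orientation of $\boldsymbol{\nu}_\varepsilon$. This is a bounded quantity, and boundedness is all that \eqref{as-est-bd-2+} uses. You should aim for that statement rather than try to force the constant $C^{(1)}_1(t)$. Your Neumann-mode expansion is a sound way to get pointwise, $t$-uniform bounds on $\partial_{\eta_1}U_1$, $\partial_{\eta_2}U_1$ and $\partial^2_{\eta_2}U_1$ up to $\eta_2\in\{0,\theta\}$, and the Taylor step above relies on exactly these bounds.
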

	\begin{proof}
		We restrict ourselves to the case $i = 1$ and the side $\Gamma_\varepsilon^{(1,1)}.$ 
		Since the leading terms in \eqref{asym-5} are independent of $\phi$, all
		$\phi$–dependence is contained in the remainder. Differentiating the remainder
		preserves its order:
		\begin{equation}\label{phi-derivative}
			\partial_\phi^k u^{(1)}_1(r,\phi,t)
			= \mathcal{O}\!\left(r^{\delta_1}\right),
			\qquad k=1,2,
		\end{equation}
		because in strip variables $(\eta_1,\eta_2)$ the decaying part satisfies
		$\partial_{\eta_2}^k \widetilde{R}(\eta,t)=\mathcal{O}(e^{\delta_1\eta_1})$, and
		the change of variables $\eta_1=\log r$, $\eta_2=\phi-\pi+\theta_1$ preserves
		this rate.
		
		Since for small $r$ the boundary condition $\partial_\nu u^{(1)}_1=0$ holds on
		$\mathcal{I}_1$, and the outward normal to $\mathcal{I}_1$ in polar coordinates
		coincides with $\mathbf e_\phi$, we therefore have
		$
		\partial_\phi u^{(1)}_1(r,\theta_1,t)=0.
		$
		For $x\in \Gamma_\varepsilon^{(1,1)}$ and such $r$ we have $\phi(x)=\theta_1+\beta_\varepsilon$, where
		$\beta_\varepsilon=\mathcal{O}(\varepsilon^{1/2})$. Hence, by Taylor expansion and \eqref{phi-derivative},
		\begin{equation}\label{phi-derivative+2}
			\partial_\phi u^{(1)}_1(r_\varepsilon,\phi(x),t)
			= \beta_\varepsilon\,\partial^2_{\phi} u^{(1)}_1(r_\varepsilon,\phi^*,t)
			= \mathcal{O}\!\left(\varepsilon^{1/2} r_\varepsilon^{\delta_1}\right),
		\end{equation}
		for some $\phi^*$ between $\theta_1$ and $\phi(x)$. 
		The unit normal to $\Gamma_\varepsilon^{(1,1)}$ is
		\[
		\boldsymbol{\nu}_\varepsilon
		= \sin\beta_\varepsilon\,\mathbf e_r(\phi)
		+ \cos\beta_\varepsilon\,\mathbf e_\phi(\phi),
		\]
		with $\sin\beta_\varepsilon=\mathcal{O}(\varepsilon^{1/2})$ and
		$\cos\beta_\varepsilon=1+\mathcal{O}(\varepsilon)$.
		Thus
		\[
		\partial_{\boldsymbol{\nu}_\varepsilon} u^{(1)}_1
		= \partial_r u^{(1)}_1\,\sin\beta_\varepsilon
		+ \frac{1}{r_\varepsilon}\,\partial_\phi u^{(1)}_1\,\cos\beta_\varepsilon.
		\]
		
		Since $r_\varepsilon=\mathcal{O}(\varepsilon^{1/2})$, the estimates
		\eqref{asym-deriv-r} and \eqref{phi-derivative+2} yield
		\[
		\partial_r u^{(1)}_1\,\sin\beta_\varepsilon
		= C^{(1)}_1(t) + \mathcal{O}\!\left(\varepsilon^{\delta_1/2}\right),
		\qquad
		\frac{1}{r_\varepsilon}\,\partial_\phi u^{(1)}_1
		= \mathcal{O}\!\left(\varepsilon^{\delta_1/2}\right),
		\]
		which proves \eqref{asym-normal-line-final}.
	\end{proof}

	\begin{remark}
		The behaviour in \eqref{asym-normal-line-final} contrasts with the estimate
		\eqref{asym-deriv-normal} on the ray $\phi=\phi_0$, where the normal derivative
		is purely angular and therefore satisfies
		$\partial_\nu u^{(i)}_1=\mathcal{O}(r^{\delta_i-1})\to 0$ as $r\to 0$.
		Along the parallel sides $\Gamma_\varepsilon^{(i,i)}$ and
		$\Gamma_\varepsilon^{(i,j)}$, however, the outward normal is tilted by
		$\mathcal{O}(\varepsilon^{1/2})$ relative to the angular direction. This tilt
		couples the normal derivative to the radial derivative, whose leading term is
		$C^{(i)}_1(t)/r$. Since $r_\varepsilon=\mathcal{O}(\varepsilon^{1/2})$, the
		normal derivative approaches the finite limit $C^{(i)}_1(t)$ rather than zero.
		Thus, even an arbitrarily small deviation from the ray changes the leading
		asymptotics of the normal derivative near the corner.
	\end{remark}

	\begin{remark}\label{rem-2-8} The solution $u_1^{(i)}$ in $\overline{\Omega^{(i)}}\setminus B_\delta(0)$ has the same Hölder regularity
		as $u_0^{(i)}$ (see \eqref{hoelder}).	
	\end{remark}

	Passing to the scaled variables $(\zeta_1,\zeta_2)$ defined in \eqref{rescaled}, 
	the leading terms in \eqref{asym-5} becomes
	\begin{equation}\label{asym-behav-log}
		C_1^{(i)}(t)\,\big(\log\varrho + \log\varepsilon\big) + \widetilde{C}^{(i)}_1(t) + o(1)
		\qquad \text{as } \ \ \varrho\to\infty.
	\end{equation}
	This expression coincides with the inner asymptotic behaviour \eqref{asym-3}
	provided we choose
	\begin{equation}\label{Lambda}
		\Lambda^{(i)}_1(t)
		= C_1^{(i)}(t)\,\log\varepsilon + \widetilde{C}^{(i)}_1(t).
	\end{equation}
	
	Thus the bulk and inner-corner ansatzes match smoothly up to order
	$\mathcal{O}\!\left(\varepsilon^{\pi/(2\theta_i)}\right)$ in the intermediate
	region
	\begin{equation}\label{region-1}
		\mathcal{D}^{(i)}_{\varepsilon}
		:= \big\{x\in\Omega^{(i)}_\varepsilon :
		2\varepsilon^{1/2} < |x| < 4\varepsilon^{1/2}\big\}.
	\end{equation}

	\section{Construction and Justification of the Approximation}\label{Sect-4}
	
	The goal of this section is to assemble a global approximation in the entire 
	domain $\Omega,$ consisting of the bulk regions \(\Omega_\varepsilon^{(1)},\Omega_\varepsilon^{(2)},\Omega_\varepsilon^{(3)}\) and
	the thin graph-like junction $\mathcal{R}_\varepsilon$ inside (see \S~\ref{domain}), and to rigorously justify  its accuracy as an approximation to solution \eqref{orig-solution} of the original problem $\mathbb{P}_\varepsilon$ (see \S~\ref{ibvp}).
	
	\subsection{Approximation} In each bulk domain $\Omega_\varepsilon^{(i)}$ and for $t\in [0,T],$ we propose the following approximation 
	\begin{equation}\label{approxim-3-1}
		\mathfrak{U}_\varepsilon^{(i)}(x,t) = u_0^{(i)}(x,t) + \varepsilon \, \Big(\chi_b(\tfrac{|x|}{\varepsilon^{1/2}}) \, u_1^{(i)}(x,t) \, + \,  \big(1 - \chi_b(\tfrac{|x|}{\varepsilon^{1/2}}) \big)\, \mathcal{K}^{(i)}_1 \Big), 
	\end{equation}
	where $u_0^{(i)}$ is the solution to problem \eqref{relations+}, $u_1^{(i)}$ is the solution to problem \eqref{bulk-term-1}--\eqref{log-sol}, $\mathcal{K}^{(i)}_1$ is the corresponding corner solution (for $i=1$ see problem \eqref{corn-probl+0}), and $\chi_b$ is a smooth cut-off function such that
	\begin{equation}\label{cut-off_function-d}
		\chi_b(s)=
		\begin{cases}
			1, & s \ge 4,
			\\[2pt]
			0, & s \le 2.
		\end{cases}
	\end{equation}
	
	Each part of the junction $\mathcal{R}_\varepsilon$ -- the three thin 
	rectangular branches and the inner node region -- is governed by a different 
	asymptotic regime, and the approximation must smoothly interpolate between them.
	In the branch $\mathcal{R}^{(i)}_\varepsilon$ we have already constructed the 
	regular expansion $\mathfrak{W}^{(i)}_\varepsilon$ 
	(see~\eqref{regul-1+} and \eqref{regul-2+}), which describes the behavior of the solution 
	away from the node and away from the outflow boundaries.  
	Near the node, the geometry forces a rapid change of scale, and the solution is 
	captured instead by the inner node-layer ansatz $\mathfrak{N}_\varepsilon$ 
	(see~\eqref{junc+1}).  
	Finally, in the two branches with outflow boundaries ($i=2,3$), the regular 
	expansion does not satisfy the Dirichlet condition at $y^{(i)}_1=\ell_i$, and the 
	corresponding boundary-layer corrections $\{\mathfrak{B}^{(i)}_\varepsilon\}_{i=2}^3$ 
	(see~\eqref{prim+}) must be added.
	
	To merge these asymptotic components into a single approximation, we employ smooth 
	cut-off functions.  
	The function $\chi_{\ell_0}$, introduced after~\eqref{new-solution_k}, governs the 
	transition between the node-layer and the regular expansions inside each branch.  
	In addition, for $i\in\{2,3\}$ we use the smooth cut-off functions
	\begin{equation}\label{cut-off_functions}
		\chi_\delta^{(i)}(s)=
		\begin{cases}
			1, & s\ge \ell_i-\delta,\\[2pt]
			0, & s\le \ell_i-2\delta,
		\end{cases}
		\qquad i\in\{2,3\},
	\end{equation}
	where $\delta>0$ is chosen sufficiently small so that $\chi_\delta^{(i)}$ vanishes on 
	the support of $\Phi^{(i,j)}$.  
	These functions ensure that the boundary-layer corrections are activated only in a 
	thin neighborhood of the outflow boundaries and do not interfere with the coupling 
	conditions on the lateral sides.
	
	With these ingredients, the global approximation $\mathfrak{A}_\varepsilon$ in $\mathcal{R}_\varepsilon$ is 
	defined by patching together the regular, inner node-layer, and boundary-layer profiles in the 
	appropriate subregions of the junction:
	\begin{equation}\label{first_app}
		\mathfrak{A}_\varepsilon =
		\left\{
		\begin{array}{ll}
			\mathfrak{W}^{(1)}_\varepsilon(y^{(1)}, t) & \text{in} \  \  \mathcal{R}^{(1)}_{\varepsilon,3\ell_0,\gamma}, 
			\\[4pt]
			\mathfrak{W}^{(i)}_\varepsilon(y^{(i)}, t) + \chi_\delta^{(i)}(y^{(i)}_1) \, \mathfrak{B}^{(i)}_\varepsilon(y^{(i)}, t)& \text{in} \ \
			\mathcal{R}^{(i)}_{\varepsilon,3\ell_0,\gamma}, \ \ i\in\{2, 3\},
			\\[4pt]
			\mathfrak{N}_\varepsilon(x,t) & \text{in} \ \  \mathcal{R}^{(0)}_{\varepsilon, \gamma},
			\\[4pt]
			\chi_{\ell_0}\big(\frac{y^{(i)}_1}{\varepsilon^\gamma}\big)\, \mathfrak{W}^{(i)}_\varepsilon +
			\Big(1- \chi_{\ell_0}\big(\frac{y^{(i)}_1}{\varepsilon^\gamma}\big)\Big) \mathfrak{N}_\varepsilon & \text{in} \ \
			\mathcal{R}^{(i)}_{\varepsilon,2\ell_0,3\ell_0,\gamma }, \ \ i\in\{1,2,3\}.
		\end{array}
		\right.
	\end{equation}
	Here $t\in[0,T]$, $\gamma\in(\tfrac23,1)$ is fixed, and the subregions
	\begin{gather}
		\mathcal{R}^{(i)}_{\varepsilon,3\ell_0,\gamma}
		:= \mathcal{R}^{(i)}_\varepsilon\cap\{y^{(i)}_1\in[3\ell_0\varepsilon^\gamma,\ell_i]\},
		\label{note_doms}\\
		\mathcal{R}^{(0)}_{\varepsilon,\gamma}
		:= \mathcal{R}^{(0)}_\varepsilon
		\cup\Big(\bigcup_{i=1}^3\mathcal{R}^{(i)}_\varepsilon
		\cap\{y^{(i)}_1\in[\varepsilon\ell_0,2\ell_0\varepsilon^\gamma]\}\Big),\notag\\
		\mathcal{R}^{(i)}_{\varepsilon,2\ell_0,3\ell_0,\gamma}
		:= \mathcal{R}^{(i)}_\varepsilon\cap\{y^{(i)}_1\in[2\ell_0\varepsilon^\gamma,3\ell_0\varepsilon^\gamma]\},
		\label{small-rec}
	\end{gather}
	describe the regions where each asymptotic component dominates.
	
	\subsection{Residuals in the bulk regions}	
	Restricting to the case $i=1$, we calculate residuals arising from substituting $\mathfrak{U}_\varepsilon^{(1)}$ into the differential equation and boundary conditions. Clearly, $\mathfrak{U}_\varepsilon^{(1)}\big|_{t=0}=0$ and $\mathfrak{U}_\varepsilon^{(1)}\big|_{x\in \mathcal{B}_1}=0.$ 
	
	In the region $\{x\in \Omega_\varepsilon^{(1)} \colon \ |x| \ge 4 \varepsilon^{\frac12}\}$ the approximation $\mathfrak{U}_\varepsilon^{(1)} = u_0^{(1)} + \varepsilon u_1^{(1)}$ and 
	the supports of functions $f_1, \, F_1$ and $\{\Psi^{(1,j)}\}$ lie entirely in this region. 
	Direct computations show that, for each $t\in (0,T),$
	\begin{equation}\label{res-reg-0}
		\partial_t \mathfrak{U}_\varepsilon^{(1)} - D_1 \Delta_x \mathfrak{U}_\varepsilon^{(1)} + F_1(\mathfrak{U}_\varepsilon^{(1)},x,t) - f_1(x,t) = \varepsilon \, \partial_t u_1^{(i)}(x,t) + \mathcal{O}(\varepsilon^2),
	\end{equation}
	\begin{remark}\label{rem-3-1}
		Throughout the paper, to avoid cumbersome notation, we do not write out the residuals arising from the application of Taylor's theorem (the Taylor formula) to smooth functions.  
		Instead, we indicate only their order in the uniform norm, which follows from the assumptions on the uniform boundedness of the derivatives of the given functions (see~$\mathbf{A1}$ and~$\mathbf{A4}$). We also omit the explicit phrase "as $\varepsilon \to 0$",
		because this limiting process is inherent in all asymptotic relations used below.
		
		Using the asymptotic behavior \eqref{asym-5}, we obtain
		\begin{equation}
			\sup_{\{x\in \Omega_\varepsilon^{(1)} : \ |x| \ge 4 \varepsilon^{1/2}\}\times(0,T)}
			\varepsilon\, |\partial_t u_1^{(i)}(x,t)|
			\le C_0\, \varepsilon |\log\varepsilon|.
		\end{equation}
	\end{remark}
	
	In the region $\mathcal{D}^{(1)}_{\varepsilon}$ (see \eqref{region-1})
	all right-hand sides in the differential equation and boundary conditions vanish, and the approximation $\mathfrak{U}_\varepsilon^{(1)}$ is given by \eqref{approxim-3-1}. Since both $u_1^{(1)}$ and $\mathcal{K}_1^{(1)}$ are harmonic in this region, we obtain 
	\begin{multline}\label{res-reg-1}
		\partial_t \mathfrak{U}_\varepsilon^{(1)} - D_1 \Delta_x \mathfrak{U}_\varepsilon^{(1)}  = \varepsilon \, \partial_t u_1^{(1)}  + \varepsilon\, \partial_t\mathcal{K}_1^{(1)}
		\\
		-\, \varepsilon \, D_1 \mathrm{div}_x\left(\big(u_1^{(1)} - \mathcal{K}_1^{(1)}\big) \, \nabla_x\big(\chi_b(\tfrac{|x|}{\varepsilon^{1/2}})\big)\right)
		- \varepsilon \, D_1  \nabla_x\big(u_1^{(1)} - \mathcal{K}_1^{(1)}\big)\cdot\nabla_x\big(\chi_b(\tfrac{|x|}{\varepsilon^{1/2}})\big).
	\end{multline}
	Obviously, $\big|\nabla_x \big(\chi_b(\frac{|x|}{\varepsilon^{1/2}})\big)\big| \le C_1 \varepsilon^{-1/2}.$ Taking into account 
	the asymptotics established in Propositions~\ref{prop-2-4} and \ref{propo-2-5}, together with the definition of $\Lambda^{(1)}_1(t)$ (see \eqref{Lambda}), we have
	\begin{equation}\label{as-est-1}
		\varepsilon \, \partial_t u_1^{(1)} = \mathcal{O}\big(\varepsilon|\log\varepsilon|\big), \qquad \varepsilon\, \partial_t\mathcal{K}_1^{(1)} = \mathcal{O}\big(\varepsilon|\log\varepsilon|\big),
	\end{equation}
	\begin{equation}\label{as-est-2}
		u_1^{(1)}(x,t) - \mathcal{K}^{(1)}_1\Big(\frac{x_1 - \varepsilon (\frac{h_2}{\cos \theta} - h_1) \cot\theta}{\varepsilon}, \frac{x_2 - \varepsilon \, h_1}{\varepsilon}, t\Big) = \mathcal{O}(\varepsilon^{1/2}) + \mathcal{O}(\varepsilon^{\delta_1/2}), 
	\end{equation}
	\begin{equation}\label{as-est-3}
		\Big|\nabla_x\Big(u^{(1)}_1(x,t) - \mathcal{K}^{(1)}_1\Big(\frac{x_1 - \varepsilon (\frac{h_2}{\cos \theta} - h_1) \cot\theta}{\varepsilon}, \frac{x_2 - \varepsilon \, h_1}{\varepsilon}, t\Big)\Big)\Big|
		= \mathcal{O}\!\left(\varepsilon^{(\delta_1-1)/2}\right), \qquad x\in\mathcal{D}^{(1)}_\varepsilon.
	\end{equation}
	where $\delta_1 = \pi/\theta_1.$

	In the region $\{x\in \Omega_\varepsilon^{(1)} \colon \ |x| < 2 \varepsilon^{\frac12}\}$ the approximation $\mathfrak{U}_\varepsilon^{(1)} = u_0^{(1)} + \varepsilon \, \mathcal{K}^{(i)}_1.$ Direct computations give that, for each $t\in (0,T),$
	\begin{equation}\label{as-est-4}
		\partial_t \mathfrak{U}_\varepsilon^{(1)} - D_1 \Delta_x \mathfrak{U}_\varepsilon^{(1)}  =  \varepsilon\, \partial_t\mathcal{K}_1^{(1)}.
	\end{equation}
	
	Next, we calculate residuals on the sides $\Gamma_\varepsilon^{(1,1)}$ and $\Gamma_\varepsilon^{(1,2)}.$ As the calculations are similar, we only restrict to the side $\Gamma_\varepsilon^{(1,1)}.$	
	On the part interface $\{x\in \Gamma_\varepsilon^{(1,1)}\colon \ |x| \ge \delta\},$ where $\delta$ is chosen such that the support of $\Psi^{(1,1)}$ belong to this part, we find
	\begin{align}\label{as-est-bd-1}
		D_1\nabla_x \mathfrak{U}_\varepsilon^{(1)} \cdot\boldsymbol{\nu}_\varepsilon  & = D_1\Big( \nabla_x u_0^{(1)}(x,t) - \nabla_x u_0^{(1)}(x_1, 0,t) \Big)  \cdot\boldsymbol{\nu}_\varepsilon \notag 
		\\ &+ \varepsilon D_1 \Big( \nabla_x u_1^{(1)}(x,t) - \nabla_x u_1^{(1)}(x_1, 0,t) \Big)  \cdot\boldsymbol{\nu}_\varepsilon
		+ \, \Psi^{(1,1)}\big(\mathfrak{U}_\varepsilon^{(1)}, \mathfrak{W}^{(1)}_\varepsilon, y_1^{(1)},t\big)\,  + \, \mathcal{O}(\varepsilon^2).
	\end{align}
	Recall that $\boldsymbol{\nu}_\varepsilon$ is the inner unit normal. 	
	Taking into account inclusion \eqref{hoelder} and Remark~\ref{rem-2-8}, we obtain 
	\begin{equation}\label{as-est-bd-2}
		D_1\nabla_x \mathfrak{U}_\varepsilon^{(1)} \cdot\boldsymbol{\nu}_\varepsilon = \Psi^{(1,1)}\big(\mathfrak{U}_\varepsilon^{(1)}, \mathfrak{W}^{(1)}_\varepsilon, y_1^{(1)},t\big)\,  + \, \mathcal{O}(\varepsilon).
	\end{equation}
	
	On $\{x\in \Gamma_\varepsilon^{(1,1)}\colon \ 4 \varepsilon^{1/2} \le |x| \le \delta\},$ we 
	have 
	\begin{equation}\label{as-est-bd-2+}
		D_1\nabla_x \mathfrak{U}_\varepsilon^{(1)} \cdot\boldsymbol{\nu}_\varepsilon   = D_1\nabla_x u_0^{(1)}(x,t) \cdot\boldsymbol{\nu}_\varepsilon + \varepsilon D_1\nabla_x u_1^{(1)}(x,t) \cdot\boldsymbol{\nu}_\varepsilon.
	\end{equation}
	Taking into account  Corollary~\ref{corollarity-2-3}, the last  term in \eqref{as-est-bd-2+} has order $\varepsilon.$
	
	On the interface part $\{x\in \Gamma_\varepsilon^{(1,1)}\colon \ |x| \le 4 \varepsilon^{1/2}\},$ we have $\nabla_x \mathcal{K}^{(1)}_1 \cdot\boldsymbol{\nu}_\varepsilon =0.$ Thus, 
	\begin{equation}\label{as-est-bd-3}
		D_1 \nabla_x \mathfrak{U}_\varepsilon^{(1)} \cdot\boldsymbol{\nu}_0 = D_1\nabla_x u_0^{(1)}\cdot\boldsymbol{\nu}_0  + \varepsilon \, D_1 \nabla_x\left(\chi_b(\tfrac{|x|}{\varepsilon^{1/2}}) \, \big(u_1^{(1)} - \mathcal{K}_1^{(1)}\big)\right) \cdot\boldsymbol{\nu}_\varepsilon	
	\end{equation}
	on $\{x\in \Gamma_\varepsilon^{(1,1)}\colon \ 2 \varepsilon^{1/2} \le |x| \le 4 \varepsilon^{1/2}\},$ and 
	\begin{equation}\label{as-est-bd-4}
		D_1 \nabla_x \mathfrak{U}_\varepsilon^{(1)} \cdot\boldsymbol{\nu}_\varepsilon = D_1\nabla_x u_0^{(1)}\cdot\boldsymbol{\nu}_\varepsilon \quad \text{on} \ \
		\{x\in \Gamma_\varepsilon^{(1,1)}\colon \ |x| \le 2 \varepsilon^{1/2}\}.
	\end{equation}
	
	Now it remains to find residuals on $\Gamma_\varepsilon^{(1,0)}.$ Taking into account the boundary condition for $\mathcal{K}^{(1)}_1$ on~$\mathfrak{I}_0$ (see \eqref{corn-probl+0}), we find
	\begin{equation}\label{as-est-bd-5}
		D_1\nabla_x \mathfrak{U}_\varepsilon^{(1)} \cdot\boldsymbol{\nu}_\varepsilon = D_1\nabla_x u_0^{(1)}\cdot\boldsymbol{\nu}_\varepsilon + \Psi_0\big(\mathfrak{U}_\varepsilon^{(1)}, \mathfrak{W}^{(1)}_\varepsilon, \tfrac{x}{\varepsilon},t\big)\,  + \, \mathcal{O}(\varepsilon).
	\end{equation}
	\begin{remark}
		From \eqref{approxim-3-1}, \eqref{as-est-2} and \eqref{as-est-3}, together with the smoothness assumptions on the corresponding right-hand sides, we have that $\mathfrak{U}^{(1)}_\varepsilon	
		\in W^{2,1}_2(\Omega^{(1)}_\varepsilon\times(0,T));$ and the same regularity holds for $\mathfrak{U}^{(2)}_\varepsilon$ and $\mathfrak{U}^{(3)}_\varepsilon.$  
	\end{remark}
	
	
	\subsection{Residuals in the thin graph-like junction $\mathcal{R}_\varepsilon$}
	
	Here we compute residuals produced when $\mathfrak{A}_\varepsilon$ (see \eqref{first_app}) is substituted into the differential equations and 
	boundary conditions of the original problem $\mathbb{P}_\varepsilon$.
	
	Obviously,
	$\mathfrak{A}_\varepsilon\big|_{t=0} = 0$ and $\mathfrak{A}_\varepsilon\big|_{y^{(i)}_1=\ell_i} = q_i(t), \ i\in \{1,2,3\}.$
	In $\mathcal{R}^{(1)}_{\varepsilon,3\ell_0,\gamma}$,  the approximation
	$\mathfrak{A}_\varepsilon$ satisfies relation \eqref{eq-thin-rect+}. For $ i\in\{2, 3\} $ we should additionally calculate  residuals from the boundary-layer:
	\begin{multline}\label{Res_3}
		\chi_\delta^{(i)}(y^{(i)}_1) \, \partial_t\mathfrak{B}^{(i)}_\varepsilon(y^{(i)}, t) -  \varepsilon\, \Delta_x \big(\chi_\delta^{(i)}(y^{(i)}_1) \, \mathfrak{B}^{(i)}_\varepsilon\big) +
		v_1^{(i)}(\ell_i) \, \partial_{y^{(i)}_1} \big(\chi_\delta^{(i)}(y^{(i)}_1) \, \mathfrak{B}^{(i)}_\varepsilon\big)
		\\
		= \varepsilon^2\, \chi_\delta^{(i)} \, \partial_t \Pi_2^{(i)}
		- \varepsilon \,  \big(\chi_\delta^{(i)}\big)^{\prime\prime}  \sum\limits_{k=0}^{2} \varepsilon^{k} \Pi_k^{(i)}
		+ 2  \big(\chi_\delta^{(i)}\big)'  \sum\limits_{k=0}^{2} \varepsilon^{k} \partial_{\eta_1} \Pi_k^{(3)}
		+  v_1^{(i)}  \,  \big(\chi_\delta^{(i)}\big)'  \sum\limits_{k=0}^{2} \varepsilon^{k} \Pi_k^{(i)}.
	\end{multline}
	The supports of terms  in the second line of \eqref{Res_3} coincide with $\mathrm{supp}\big(\big(\chi_\delta^{(i)}\big)'\big),$ where
	the functions $\{\Pi_k^{(i)}\}_{k=0}^{2}$ exponentially decay as $\varepsilon$ tends to zero. Therefore, in
	$\mathcal{R}^{(i)}_{\varepsilon,3\ell_0,\gamma},$ $i\in\{2,3\},$ the approximation
	$\mathfrak{A}_\varepsilon$ leaves residual $\varepsilon^2 \, \digamma^{(i)}_\varepsilon,$ where $\digamma^{(i)}_\varepsilon$ is uniformly bounded similarly as $\digamma^{(1)}_\varepsilon$ (see \eqref{uniform-est-1}).
	
	Based on calculations in \S~\ref{Par-3-1}, we obtain 
	$$
	-\Big(-\partial_{y^{(1)}_2}\mathfrak{W}^{(1)}_\varepsilon
	+ \mathfrak{W}^{(1)}_\varepsilon \,v^{(1)}_2\Big) = 
	\varepsilon\,\Phi^{(1,1)}\!\left(\mathfrak{U}^{(1)}_\varepsilon,
	\mathfrak{W}^{(1)}_\varepsilon,y^{(1)}_1,t\right)
	+ \mathcal{O}(\varepsilon^2) \quad \text{on} \ \ \Gamma_\varepsilon^{(1,1)}, \ \ y^{(1)}_1\in[3\ell_0\varepsilon^\gamma,\ell_1].
	$$
	Similar relations are valid on $\Gamma_\varepsilon^{(i,j)}$, for $y^{(j)}_1\in[3\ell_0\varepsilon^\gamma,\ell_j],$ for the indices $(i,j)$ specified in \eqref{indexes}.		
	
	In the neighborhood $\mathcal{R}^{(0)}_{\varepsilon,\gamma}$ of the node 
	$\mathcal{R}^{(0)}_{\varepsilon}$ we compute the residual generated by the 
	approximation~$\mathfrak{A}_\varepsilon,$ which coincides with $\mathfrak{N}_\varepsilon$ in this region. Substituting $\mathfrak{N}_\varepsilon$ 
	into the differential equation gives
	\begin{equation}\label{Res_6}
		\partial_t\,\mathfrak{N}_\varepsilon -  \varepsilon\, \Delta_x  \mathfrak{N}_\varepsilon +
		\mathrm{div} \big( \overrightarrow{V_\varepsilon}(x) \, \mathfrak{N}_\varepsilon\big)
		= \varepsilon\, \digamma^{(0)}_\varepsilon, 
	\end{equation}
	where $\digamma^{(0)}_\varepsilon = \partial_t N_1.$
	Inserting $\mathfrak{N}_\varepsilon$ into the boundary condition on 
	$\Gamma^{(i,0)}_\varepsilon$ yields
	\begin{equation}\label{Res_7}
		-   \partial_{\boldsymbol{\nu}_\varepsilon} \mathfrak{N}_\varepsilon  = \Phi_0\big(\mathfrak{U}^{(i)}_{\varepsilon}, \mathfrak{N}_\varepsilon,\tfrac{x}{\varepsilon}, t \big) + \varepsilon \, \gimel^{(i)}_\varepsilon(x,t) 
		\ \   \text{on} \ \   \Gamma^{(i,0)}_\varepsilon\times(0,T), 
	\end{equation}
	where $\gimel^{(i)}_\varepsilon$ denotes the residual term. The Lipschitz continuity of $\Phi_0$ 
	(assumption~$\mathbf{A3}$), the Hölder regularity of $u_0^{(i)}$ 
	(Proposition~\ref{prop-2-1}), and the boundedness of $N_1$ and $\mathcal{K}^{(i)}_1$ on 
	$\Gamma^{(i,0)}_\varepsilon\times(0,T)$ imply that this residual remains uniformly bounded:
	\begin{equation}\label{Res_5}
		\sup_{\Gamma^{(i,0)}_\varepsilon\times(0,T)}
		|\gimel^{(i)}_\varepsilon(x,t)| \le C_1.
	\end{equation}
	
	On the remaining part of $\partial\mathcal{R}^{(0)}_{\varepsilon,\gamma}$ the residual 
	vanishes, because $\Phi_0=\Phi^{(i,j)}=0$ (assumptions~$\mathbf{A3}$–$\mathbf{A4}$) and 
	$\partial_{\boldsymbol{\nu}_\varepsilon}N_0
	=\partial_{\boldsymbol{\nu}_\varepsilon}N_1=0$ there 
	(see~\S\ref{subsec_Inner_part}).
	
	Now consider the region $\mathcal{R}^{(i)}_{\varepsilon,2\ell_0,3\ell_0,\gamma}$ (see~\eqref{small-rec}), where $\mathfrak{A}_\varepsilon = \chi_{\ell_0}\big(\frac{y^{(i)}_1}{\varepsilon^\gamma}\big)\, \mathfrak{W}^{(i)}_\varepsilon +
	\big(1- \chi_{\ell_0}\big(\frac{y^{(i)}_1}{\varepsilon^\gamma}\big)\big) \mathfrak{N}_\varepsilon.$

	Based on the same arguments as above and using the second statement of 
	Remark~\ref{rem-2-3} concerning the coefficients 
	$\{z^{(i)}_1\}_{i=1}^3$ and $\{z^{(i)}_2\}_{i=1}^3$, we obtain
	\begin{equation}\label{Res_8}
		\partial_{y^{(i)}_2}\mathfrak{A}_\varepsilon = 0
	\end{equation}
	on the lateral sides of $\mathcal{R}^{(i)}_{\varepsilon,2\ell_0,3\ell_0,\gamma}.$

	Using \eqref{eq-thin-rect+}, \eqref{uniform-est-1}, and \eqref{Res_6}, we compute
	\begin{multline}\label{Res_9}
		\partial_t \mathfrak{A}_\varepsilon
		- \varepsilon\, \Delta \mathfrak{A}_\varepsilon
		+ \mathrm{v}_i\, \partial_{y^{(i)}_1}\mathfrak{A}_\varepsilon
		=
		-\,\varepsilon^2\, 
		\chi_{\ell_0}\!\Big(\frac{y^{(i)}_1}{\varepsilon^\gamma}\Big)
		\big(w^{(i)}_{1}(y^{(i)}_1,t)\big)^{\prime\prime}
		+ \varepsilon\Big(1-\chi_{\ell_0}\!\Big(\frac{y^{(i)}_1}{\varepsilon^\gamma}\Big)\Big)\partial_t N_1
		\\
		-\,\chi_{\ell_0}'' 
		\sum_{k=0}^{1}\varepsilon^{k+1-2\gamma}\big(w_k^{(i)}(y^{(i)}_1,t)-N_k\big)
		- 2\chi_{\ell_0}' 
		\sum_{k=0}^{1}\varepsilon^{k+1-\gamma}
		\Big(\partial_{y^{(i)}_1}w_k^{(i)}(y^{(i)}_1,t)
		- \varepsilon^{-1}\partial_{\xi_1^{(i)}}N_k\Big)
		\\
		+\,\mathrm{v}_i\,\chi_{\ell_0}'
		\sum_{k=0}^{1}\varepsilon^{k-\gamma}\big(w_k^{(i)}(y^{(i)}_1,t)-N_k\big)
		\quad\text{in }\ \ \mathcal{R}^{(i)}_{\varepsilon,2\ell_0,3\ell_0,\gamma}.
	\end{multline}
	
	All terms in the second and third lines of \eqref{Res_9} are supported in 
	$\operatorname{supp}(\chi_{\ell_0}')$.  
	Using the Taylor expansion of $w_0^{(i)}$ and $w_1^{(i)}$ at $y^{(i)}_1=0$ and 
	formula~\eqref{new-solution_k} for $N_1,$  these terms can be rewritten as
	\begin{equation*}
		\chi_{\ell_0}''\, \varepsilon^{2-2\gamma}\widetilde{N}_1\big(\tfrac{x}{\varepsilon},t\big)
		+ 2\chi_{\ell_0}' \, \varepsilon^{1-\gamma}
		\partial_{\xi_1^{(i)}}\widetilde{N}_1(\xi,t)\big|_{\xi=x/\varepsilon}
		- \mathrm{v}_i\,\chi_{\ell_0}' \, 
		\varepsilon^{1-\gamma}\widetilde{N}_1\big(\tfrac{x}{\varepsilon},t\big)
		+ \mathcal{O}(\varepsilon^\gamma).
	\end{equation*}
	By \eqref{new-solution_k} and \eqref{exp-decrease+1}, the maxima of 
	$|\widetilde{N}_k|$ and $|\partial_{\xi_i}\widetilde{N}_k|$ over
	\[
	\mathcal{R}^{(i)}_\varepsilon \cap 
	\big\{y^{(i)}: y^{(i)}_1\in[2\ell_0\varepsilon^\gamma,\,3\ell_0\varepsilon^\gamma]\big\}
	\times[0,T]
	\]
	are of order $\exp(-\beta_0\,2\ell_0\,\varepsilon^{\gamma-1})$, i.e., they decay exponentially as 
	$\varepsilon\to 0$.  
	Consequently, the right-hand side of \eqref{Res_9} is of order $\varepsilon^\gamma$ in the uniform norm. We denote it by $\varepsilon^\gamma\, \digamma^{(i)}_\varepsilon.$ 
	No ambiguity arises from using the same notation as in the differential equation on $\mathcal{R}^{(i)}_{\varepsilon,3\ell_0,\gamma},$
	since they are given  on different domains.
	
	Summing-up  calculations in \S~\ref{Par-3-1}, \S~\ref{subsec_Bound_layer}  and in this one, we get the statement.

	\begin{proposition}\label{Prop-3-1}
		There exists a positive number $\varepsilon_0$ such that, for all 
		$\varepsilon\in(0,\varepsilon_0)$, the difference between the approximation 
		\eqref{first_app} and the junction component $w_\varepsilon$ of the solution to 
		problem $\mathbb{P}_\varepsilon$ satisfies, for all $t\in(0,T)$ and $i\in\{1,2,3\},$ the relations
		\begin{equation}\label{dif_3}
			\partial_t(\mathfrak{A}_\varepsilon - w_\varepsilon)
			- \varepsilon\,\Delta_{y^{(i)}}(\mathfrak{A}_\varepsilon - w_\varepsilon)
			+ \mathrm{div}_{y^{(i)}}\!\big(\overrightarrow{V_\varepsilon}^{(i)}\,
			(\mathfrak{A}_\varepsilon - w_\varepsilon)\big)
			= \varepsilon^2\,\digamma^{(i)}_\varepsilon
			\quad\text{in } \ \mathcal{R}^{(i)}_{\varepsilon,3\ell_0,\gamma},
		\end{equation}
		\begin{equation}\label{dif_2}
			\partial_t(\mathfrak{A}_\varepsilon - w_\varepsilon)
			- \varepsilon\,\Delta_{y^{(i)}}(\mathfrak{A}_\varepsilon - w_\varepsilon)
			+ \mathrm{v}_i\,\partial_{y^{(i)}_1}(\mathfrak{A}_\varepsilon - w_\varepsilon)
			= \varepsilon^{\gamma}\,\digamma^{(i)}_\varepsilon
			\quad\text{in } \ \mathcal{R}^{(i)}_{\varepsilon,2\ell_0,3\ell_0,\gamma},
		\end{equation}
		\begin{equation}\label{dif_1}
			\partial_t(\mathfrak{A}_\varepsilon - w_\varepsilon)
			- \varepsilon\,\Delta_x(\mathfrak{A}_\varepsilon - w_\varepsilon)
			+ \mathrm{div}\!\big(\overrightarrow{V_\varepsilon}(x)\,
			(\mathfrak{A}_\varepsilon - w_\varepsilon)\big)
			= \varepsilon\,\digamma^{(0)}_\varepsilon
			\quad\text{in } \ \mathcal{R}^{(0)}_{\varepsilon,\gamma}.
		\end{equation}
		
		On the node interface $\Gamma^{(i,0)}_\varepsilon$ we have
		\begin{equation}\label{bc_1+}
			-\,\partial_{\boldsymbol{\nu}_\varepsilon}(\mathfrak{A}_\varepsilon - w_\varepsilon)
			- \Phi_0\!\left(\mathfrak{U}^{(i)}_\varepsilon,\mathfrak{A}_\varepsilon,
			\tfrac{x}{\varepsilon},t\right)
			+ \Phi_0\!\left(u^{(i)}_\varepsilon,w_\varepsilon,
			\tfrac{x}{\varepsilon},t\right)
			= \varepsilon\,\gimel^{(i)}_\varepsilon.
		\end{equation}
		
		On the lateral sides of $\mathcal{R}^{(i)}_{\varepsilon,2\ell_0,3\ell_0,\gamma}$ we have
		\begin{equation}\label{bc_2}
			-\,\partial_{y^{(j)}_2}(\mathfrak{A}_\varepsilon - w_\varepsilon)\big|_{y^{(j)}_2=\pm\varepsilon h_j}
			= 0,
			\qquad y^{(j)}_1\in[\ell_0\varepsilon,\,3\ell_0\varepsilon^\gamma].
		\end{equation}
		
		On $\Gamma_\varepsilon^{(j,j)}$, for $y^{(j)}_1\in[3\ell_0\varepsilon^\gamma,\ell_j]$, we have
		\begin{equation}\label{bc-3}
			-\Big(-\partial_{y^{(j)}_2}(\mathfrak{A}_\varepsilon - w_\varepsilon)
			+ (\mathfrak{A}_\varepsilon - w_\varepsilon)\,v^{(j)}_2\Big)
			- \varepsilon\,\Phi^{(j,j)}\!\left(\mathfrak{U}^{(j)}_\varepsilon,
			\mathfrak{A}^{(j)}_\varepsilon,y^{(j)}_1,t\right)
			+ \varepsilon\,\Phi^{(j,j)}\!\left(u^{(j)}_\varepsilon,w_\varepsilon,
			y^{(j)}_1,t\right)
			= \varepsilon^2\,\gimel^{(j,j)}_\varepsilon.
		\end{equation}
		
		On $\Gamma_\varepsilon^{(i,j)}$, for $y^{(j)}_1\in[3\ell_0\varepsilon^\gamma,\ell_j]$, we have
		\begin{equation}\label{bc-4}
			\Big(-\partial_{y^{(j)}_2}(\mathfrak{A}_\varepsilon - w_\varepsilon)
			+ (\mathfrak{A}_\varepsilon - w_\varepsilon)\,v^{(j)}_2\Big)
			- \varepsilon\,\Phi^{(i,j)}\!\left(\mathfrak{U}^{(i)}_\varepsilon,
			\mathfrak{A}^{(j)}_\varepsilon,y^{(j)}_1,t\right)
			+ \varepsilon\,\Phi^{(i,j)}\!\left(u^{(i)}_\varepsilon,w_\varepsilon,
			y^{(j)}_1,t\right)
			= \varepsilon^2\,\gimel^{(i,j)}_\varepsilon.
		\end{equation}
		
		Finally,
		\[
		(\mathfrak{A}_\varepsilon - w_\varepsilon)\big|_{x_i=\ell_i}
		= 0 \quad\text{on }\Upsilon_\varepsilon^{(i)}(\ell_i),\ i\in\{1,2,3\},
		\qquad
		(\mathfrak{A}_\varepsilon - w_\varepsilon)\big|_{t=0}
		= 0 \quad\text{in } \ \mathcal{R}_\varepsilon.
		\]
		
		The functions 
		$\digamma^{(i)}_\varepsilon$, $\digamma^{(0)}_\varepsilon$, 
		$\gimel^{(i)}_\varepsilon$, and $\gimel^{(i,j)}_\varepsilon$ 
		appearing in the right-hand sides of 
		\eqref{dif_3}–\eqref{bc-4} are uniformly bounded on their respective domains of 
		definition by constants independent of $\varepsilon$. Here the indices $(i,j)$ range over the admissible pairs specified in \eqref{indexes}.	
	\end{proposition}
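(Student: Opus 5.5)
The proof is a bookkeeping argument. Since $w_\varepsilon$ solves problem $\mathbb{P}_\varepsilon$ exactly, every relation in Proposition~\ref{Prop-3-1} follows by subtracting the exact equations satisfied by $w_\varepsilon$ (the differential equation \eqref{p-equations+} rewritten as \eqref{eq-thin-rect} in each branch, the node and lateral conditions \eqref{coup-1}, \eqref{coupl-1+} rewritten as \eqref{lbc-1}--\eqref{lbc-2}, the Dirichlet data \eqref{p-outer-bc} and the initial condition \eqref{p-in}) from the corresponding residual identities for $\mathfrak{A}_\varepsilon$ computed above. What remains is to check, region by region, that these residuals have the stated order and that the remainder functions are bounded uniformly in $\varepsilon$. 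I will therefore follow the decomposition \eqref{first_app} of $\mathcal{R}_\varepsilon$ into $\mathcal{R}^{(i)}_{\varepsilon,3\ell_0,\gamma}$, $\mathcal{R}^{(i)}_{\varepsilon,2\ell_0,3\ell_0,\gamma}$ and $\mathcal{R}^{(0)}_{\varepsilon,\gamma}$.

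In $\mathcal{R}^{(i)}_{\varepsilon,3\ell_0,\gamma}$, relation \eqref{dif_3} follows from \eqref{eq-thin-rect+}, with the bound \eqref{uniform-est-1} and its analogues for $i=2,3$. For $i\in\{2,3\}$ I add the boundary-layer residual \eqref{Res_3}. On $\mathrm{supp}\,\chi_\delta^{(i)}$ the field is constant, $(\mathrm{v}_1^{(i)}(\ell_i),0)$, and $\Pi_k^{(i)}$ solve \eqref{prim+probl+0}--\eqref{prim+probl+k}, so only $\varepsilon^2\chi^{(i)}_\delta\partial_t\Pi^{(i)}_2$ and the cut-off terms survive. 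The cut-off terms live where $\eta_1\ge\delta/\varepsilon$, so by \eqref{exp_decay} they are $\mathcal{O}(e^{-\theta_0\delta/(2\varepsilon)})=o(\varepsilon^2)$. The lateral identities \eqref{bc-3}--\eqref{bc-4} follow from \eqref{bc-1}--\eqref{bc-4} for $z_1,z_2$. Here I Taylor expand $\Phi^{(i,j)}$ around $(u_0^{(i)},w_0^{(j)})$, noting that $\mathfrak{U}^{(i)}_\varepsilon-u_0^{(i)}=\mathcal{O}(\varepsilon)$ on the support of $\Phi^{(i,j)}$, which lies away from the origin so that $u_1^{(i)}$ is bounded there (Remark~\ref{rem-2-8}). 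The boundary-layer term contributes nothing there because $\chi_\delta^{(i)}$ vanishes on $\mathrm{supp}\,\Phi^{(i,j)}$. The Dirichlet data hold since $\Pi^{(i)}_0+\varepsilon\Pi^{(i)}_1$ at $\eta_1=0$ exactly cancels the mismatch $q_i-w_0^{(i)}(\ell_i)-\varepsilon w^{(i)}_1(\ell_i)$. The terms $z_k^{(i)}$ vanish near $\ell_i$ by the compact support in Remark~\ref{rem-2-3}, and for $i=1$ the condition $w_1^{(1)}(\ell_1,t)=0$ is imposed in \eqref{prob_w_1}. The initial conditions follow from \eqref{in_cond} and the zero data.

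Near the node, \eqref{dif_1} and \eqref{bc_1+} come from \eqref{Res_6}--\eqref{Res_5}. I need $\partial_tN_1$ bounded on $\mathcal{R}^{(0)}_{\varepsilon,\gamma}$. The growing part $\Theta^{(i)}_1=\xi_1^{(i)}\partial_{y_1}w_0^{(i)}(0,t)$ is at most $\mathcal{O}(\varepsilon^{\gamma-1})$ there, but it is multiplied by $\varepsilon$, which gives $\mathcal{O}(\varepsilon^\gamma)$ in \eqref{Res_6}. So I will absorb it by observing that $\varepsilon N_1(x/\varepsilon)=\varepsilon w_1(0,t)+y_1\partial_{y_1}w_0(0,t)+\varepsilon\widetilde N_1$, which is $\mathcal{O}(1)$ and smooth in $t$. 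The bound for $\partial_t\widetilde N_1$ follows from differentiating problem \eqref{N_k_prob} in $t$, using the $C^2$-smoothness of the data. The lateral condition \eqref{bc_2} holds because $\Phi_0$ and $\Phi^{(i,j)}$ vanish there and the Neumann conditions in \eqref{N_0_prob}--\eqref{N_1_prob} are homogeneous. The $z$-terms vanish there by their compact longitudinal support, which gives \eqref{Res_8}.

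The main obstacle is the transition region $\mathcal{R}^{(i)}_{\varepsilon,2\ell_0,3\ell_0,\gamma}$, i.e.\ relation \eqref{dif_2}, which rests on \eqref{Res_9}. There the mismatch $w_k^{(i)}(y_1,t)-N_k$ is multiplied by the large factors $\varepsilon^{-2\gamma}$ and $\varepsilon^{-\gamma}$ coming from the cut-off. I will Taylor expand $w_0^{(i)}$ and $w_1^{(i)}$ at $y_1=0$ to second and first order respectively and cancel them against $w_0^{(1)}(0,t)$ and $\varepsilon(w_1^{(i)}(0,t)+\Theta^{(i)}_1)$. The Taylor remainders are $\mathcal{O}(\varepsilon^{2\gamma})$ and $\mathcal{O}(\varepsilon^{1+\gamma})$; after multiplication by $\varepsilon^{-\gamma}$ they give $\mathcal{O}(\varepsilon^{\gamma})$, and the $\varepsilon^{1-2\gamma}$ terms are smaller. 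This uses $\gamma>2/3$, chosen in \eqref{first_app}. What remains is $\widetilde N_1$ and its derivatives, which are exponentially small by \eqref{exp-decrease+1}, and the term $\varepsilon(1-\chi_{\ell_0})\partial_tN_1=\mathcal{O}(\varepsilon^{\gamma})$. Collecting everything yields \eqref{dif_2} with a uniformly bounded $\digamma^{(i)}_\varepsilon$, provided $\varepsilon<\varepsilon_0$ is small enough that the cut-off supports are disjoint and contained in the regions where $v_1^{(i)}\equiv\mathrm{v}_i$.
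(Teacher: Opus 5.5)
Your route is the paper's. Its proof is just the collection of \eqref{eq-thin-rect+}, \eqref{Res_3}, \eqref{Res_6}--\eqref{Res_5}, \eqref{Res_8} and \eqref{Res_9}, and your region-by-region check is a more careful version of that bookkeeping. The step that fails is the node region: you spot the difficulty, but your ``absorption'' does not remove it.

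On the branch pieces $\{\varepsilon\ell_0\le y^{(i)}_1\le 2\ell_0\varepsilon^\gamma\}$ of $\mathcal{R}^{(0)}_{\varepsilon,\gamma}$ the fast variable reaches $\xi^{(i)}_1=2\ell_0\varepsilon^{\gamma-1}$. For $\xi^{(i)}_1\ge 3\ell_0$, formula \eqref{new-solution_k} gives
\[
\varepsilon\,\partial_t N_1=\varepsilon\,\partial_t w^{(i)}_1(0,t)+y^{(i)}_1\,\partial_t\partial_{y^{(i)}_1}w^{(i)}_0(0,t)+\varepsilon\,\partial_t\widetilde N_1 .
\]
Rewriting $\varepsilon N_1$ as an $\mathcal{O}(1)$ function that is smooth in $t$ changes nothing. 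The middle term has exact size $\varepsilon^\gamma$ near $y^{(i)}_1=2\ell_0\varepsilon^\gamma$. Since $v^{(i)}_1\equiv\mathrm{v}_i$ near the vertex, \eqref{limit_prob} gives $\partial_t\partial_{y^{(i)}_1}w^{(i)}_0(0,t)=-\partial^2_t w^{(1)}_0(0,t)/\mathrm{v}_i$, which is nonzero in general.

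You yourself accept $\varepsilon(1-\chi_{\ell_0})\partial_t N_1=\mathcal{O}(\varepsilon^\gamma)$ in the adjacent transition strip, and the same term cannot become $\mathcal{O}(\varepsilon)$ slightly closer to the node. Hence $\digamma^{(0)}_\varepsilon=\partial_t N_1$ is only $\mathcal{O}(\varepsilon^{\gamma-1})$ there, and \eqref{dif_1} with a uniformly bounded $\digamma^{(0)}_\varepsilon$ fails in general. The paper's one-line proof passes over exactly this point: it declares $\partial_t N_1$ bounded after \eqref{Res_6}. What does hold is $|\varepsilon\,\partial_t N_1|\le C(\varepsilon+y^{(i)}_1)$. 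That is an $\varepsilon$-residual on $\mathcal{R}^{(0)}_\varepsilon$ and an $\varepsilon^\gamma$-residual on the branch pieces, and you should state \eqref{dif_1} in that form.

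This weaker form suffices for Theorem~\ref{apriory_estimate}. The branch pieces and the transition strips have measure $\mathcal{O}(\varepsilon^{1+\gamma})$, so an $\varepsilon^\gamma$-residual contributes $\mathcal{O}(\varepsilon^{1+3\gamma})=o(\varepsilon^3)$ to \eqref{th-8}, precisely because $\gamma>\tfrac23$. That is where $\gamma>\tfrac23$ is used. It plays no role in your Taylor step in the transition region, which gives $\mathcal{O}(\varepsilon^\gamma)$ for every $\gamma\in(0,1)$.

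A milder defect of the same kind affects \eqref{bc_1+}, which you take from \eqref{Res_5} at face value. Near $\Gamma^{(i,0)}_\varepsilon$ one has $\chi_b=0$, so $\mathfrak{U}^{(i)}_\varepsilon=u^{(i)}_0+\varepsilon\mathcal{K}^{(i)}_1$. By \eqref{Lambda}, $\mathcal{K}^{(i)}_1$ contains the additive constant $\Lambda^{(i)}_1(t)=C^{(i)}_1(t)\log\varepsilon+\widetilde{C}^{(i)}_1(t)$ with $C^{(i)}_1>0$ (Remark~\ref{rem-2-6}). Thus $\mathcal{K}^{(i)}_1=\mathcal{O}(|\log\varepsilon|)$ on the node boundary. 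The Lipschitz comparison of $\Phi_0(\mathfrak{U}^{(i)}_\varepsilon,\mathfrak{N}_\varepsilon,\cdot)$ with $\Phi_0(u^{(i)}_0(0,t),N_0,\cdot)$ then only yields $\gimel^{(i)}_\varepsilon=\mathcal{O}(|\log\varepsilon|)$, not a uniform bound. This costs only $\mathcal{O}(\varepsilon^4|\log\varepsilon|^2)$ in \eqref{th-8}, but it again contradicts the statement as written.
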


	\subsection{Justification} 
	Here we prove the asymptotic estimates for the difference between the solution 
	\begin{equation*}
		\begin{cases}
			u_\varepsilon^{(i)} & \text{in} \ \ \in\Omega_\varepsilon^{(i)}\times(0,T),\quad i\in\{1,2,3\},\\[3pt]
			w_\varepsilon & \text{in} \ \ \mathcal{R}_\varepsilon\times(0,T),
		\end{cases}
	\end{equation*}	
	to the original problem $\mathbb{P}_\varepsilon$ and the approximation function
	\begin{equation*}
		\begin{cases}
			\mathfrak{U}^{(i)}_\varepsilon & \text{in} \ \ \Omega_\varepsilon^{(i)}\times(0,T),\quad i\in\{1,2,3\},\\[3pt]
			\mathfrak{A}_\varepsilon  & \text{in} \ \ \mathcal{R}_\varepsilon\times(0,T),
		\end{cases}
	\end{equation*}	
	constructed by formulas \eqref{approxim-3-1} and \eqref{first_app}, respectively.

	\begin{theorem}\label{apriory_estimate} Let all the assumptions of Section~\ref{Sec-2} be satisfied.\
		Then for any $\mu_i \in (1, \tfrac{\pi}{\theta_i}),$  $i\in \{1,2,3\},$ there exist positive constants 
		$\tilde{C}_1$, $\tilde{C}_2$, $\tilde{C}_3$ and $\varepsilon_0$ such that, for all 
		$\varepsilon \in (0,\varepsilon_0)$, the following estimates hold:  
		\begin{equation}\label{main-1}
			\tfrac{1}{\sqrt{|\mathcal{R}_\varepsilon|}}\,\max_{t\in[0,T]} \big\|\mathfrak{A}_\varepsilon(\cdot,t) - w_\varepsilon(\cdot,t)\big\|_{L^2(\mathcal{R}_\varepsilon)} \le \tilde{C}_1 \Big(\varepsilon +  \sum_{i=1}^{3} \varepsilon^{\mu_i - \frac12} \sqrt{|\log\varepsilon|}\Big),
		\end{equation}
		\begin{equation}\label{main-2}
			\tfrac{1}{\sqrt{|\mathcal{R}_\varepsilon|}}\, \big\| \nabla\mathfrak{A}_\varepsilon - \nabla w_\varepsilon\big\|_{L^2(\mathcal{R}_\varepsilon\times (0, T))} \le \tilde{C}_2
			\Big(\varepsilon^{1/2} + \sum_{i=1}^{3} \varepsilon^{\mu_i - 1} \sqrt{|\log\varepsilon|} 
			\Big),
		\end{equation}	
		\begin{multline}\label{main-3}
			\max_{t\in[0,T]}\big\|\mathfrak{U}^{(i)}_\varepsilon(\cdot,t) - u^{(i)}_\varepsilon(\cdot,t)\big\|_{L^2(\Omega_\varepsilon^{(i)})} +	\big\|\nabla \mathfrak{U}^{(i)}_\varepsilon - \nabla u^{(i)}_\varepsilon\big\|_{L^2(\Omega_\varepsilon^{(i)}\times(0,T))}
			\\
			\le \tilde{C}_3 \Big(\varepsilon^{\mu_i -1} + \varepsilon |\log\varepsilon| + \sum_{\imath =1}^{3} \varepsilon^{\mu_\imath - \frac12} \sqrt{|\log\varepsilon|}
			\Big) \quad \text{for} \ \ i\in \{1,2,3\}.
		\end{multline}
		Here  $|\mathcal{R}_\varepsilon|$ is  the Lebesgue measure of $\mathcal{R}_\varepsilon.$
	\end{theorem}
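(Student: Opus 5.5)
We will run a standard energy argument on the error functions
\[
W_\varepsilon:=\mathfrak{A}_\varepsilon-w_\varepsilon \ \text{ in } \mathcal{R}_\varepsilon, \qquad
V^{(i)}_\varepsilon:=\mathfrak{U}^{(i)}_\varepsilon-u^{(i)}_\varepsilon \ \text{ in } \Omega^{(i)}_\varepsilon .
\]
Both vanish at $t=0$. In addition, $W_\varepsilon$ vanishes on $\Upsilon^{(i)}_\varepsilon(\ell_i)$ and $V^{(i)}_\varepsilon$ vanishes on $\partial\Omega\cap\partial\Omega^{(i)}_\varepsilon$, so each is an admissible test function.

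First, we write the integral identities satisfied by $\mathfrak{U}^{(i)}_\varepsilon$ and $\mathfrak{A}_\varepsilon$, using the residual relations in \eqref{res-reg-0}--\eqref{as-est-bd-5} and Proposition~\ref{Prop-3-1}. We subtract \eqref{identity-1} and \eqref{identity-f}, test with $V^{(i)}_\varepsilon$ and $W_\varepsilon$, and integrate over $(0,\tau)$.

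The fracture identity is handled as follows. The advection term $\int W_\varepsilon \overrightarrow{V_\varepsilon}\cdot\nabla W_\varepsilon$ equals a boundary integral of $\tfrac12 W_\varepsilon^2\,\overrightarrow{V_\varepsilon}\cdot\boldsymbol{\nu}_\varepsilon$ minus a divergence term. The divergence vanishes in the node by \eqref{field_0} and is $\mathcal{O}(1)$ in the branches. The boundary part is $\mathcal{O}(\varepsilon)$ on the lateral sides, because the transversal component is $\varepsilon v_2$, and it vanishes on the outer ends. This produces a Gronwall-type term $C\|W_\varepsilon\|^2$.

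The nonlinear coupling differences $\Phi(\mathfrak{U},\mathfrak{A})-\Phi(u,w)$ and $\Psi(\mathfrak{U},\mathfrak{A})-\Psi(u,w)$ are bounded by Lipschitz continuity (assumptions \textbf{A3}, \textbf{A4}). To control them we need the $\varepsilon$-scaled trace inequalities
\[
\varepsilon\int_{\Gamma^{(i,j)}_\varepsilon}W^2\,dl\le C\Big(\int_{\mathcal{R}^{(j)}_\varepsilon}W^2\,dx+\varepsilon^2\int_{\mathcal{R}^{(j)}_\varepsilon}|\nabla W|^2\,dx\Big),
\]
and the analogous inequality on the node boundary $\Gamma^{(i,0)}_\varepsilon$, which has length $\mathcal{O}(\varepsilon)$. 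For the bulk side we use a uniform trace inequality on $\Omega^{(i)}_\varepsilon$.

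The cross terms of the form $\varepsilon^{k}\int \Psi\text{-differences}\cdot V$ then split by Young's inequality. The monotonicity conditions \eqref{pos-1} and \eqref{pos-2} give the right sign for the $s$-derivative parts of $F_i$ and $\Psi^{(i,j)}$, so only the $\omega$-dependence needs absorption. Adding the bulk and fracture energy inequalities and applying Gronwall will give bounds in terms of the residuals.

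Next, we estimate each residual functional. The fracture residuals are $\varepsilon^2\digamma$ on a set of measure $\mathcal{O}(\varepsilon)$, $\varepsilon^\gamma\digamma$ on the transition rectangles of measure $\mathcal{O}(\varepsilon^{1+\gamma})$, $\varepsilon\digamma^{(0)}$ on the node of measure $\mathcal{O}(\varepsilon^{2})$, and $\varepsilon\gimel$ on $\Gamma^{(i,0)}_\varepsilon$. After normalising by $|\mathcal{R}_\varepsilon|\sim\varepsilon$, these contribute $\mathcal{O}(\varepsilon)$ to \eqref{main-1} and $\mathcal{O}(\varepsilon^{1/2})$ to the gradient estimate \eqref{main-2}.

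The bulk residuals are treated region by region:
\begin{itemize}
\item $\varepsilon\,\partial_t u_1$ and $\varepsilon\,\partial_t\mathcal{K}_1$ are $\mathcal{O}(\varepsilon|\log\varepsilon|)$ by \eqref{as-est-1}. This is the source of the $\varepsilon|\log\varepsilon|$ term.
\item The matching-zone terms in \eqref{res-reg-1} live on $\mathcal{D}^{(i)}_\varepsilon$, which has area $\sim\varepsilon$. By \eqref{as-est-2} and \eqref{as-est-3}, combined with $|\nabla\chi_b|\le C\varepsilon^{-1/2}$, they are of size $\varepsilon\cdot\varepsilon^{-1/2}\varepsilon^{(\delta_i-1)/2}$. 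We test them in divergence form so that the derivative falls on $V$, with $L^2$ mass $\sim\varepsilon^{1/2}$. This yields a contribution of order $\varepsilon^{\mu_i-1}$ for any $\mu_i<\delta_i=\pi/\theta_i$. The loss from $\delta_i$ to $\mu_i$ comes from Proposition~\ref{prop-2-1}, since the Hölder regularity of $u_0$ is only available for exponents strictly below $\pi/\theta_i$.
\item On $\Gamma^{(i,j)}_\varepsilon$ near the corner, \eqref{as-est-bd-3}--\eqref{as-est-bd-4} leave the term $D_i\nabla u_0\cdot\boldsymbol{\nu}_\varepsilon$, where $\Psi=0$. Here we use the corner asymptotics \eqref{asym-in-coner}, so that $|\nabla u_0|=\mathcal{O}(r^{\mu_i-1})$ and the tilt of the normal is controlled. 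We also use Corollary~\ref{corollarity-2-3} for $\nabla u_1$.
\item On the node interface, \eqref{as-est-bd-5} leaves $D_i\nabla u_0\cdot\boldsymbol{\nu}_\varepsilon=\mathcal{O}(\varepsilon^{\mu_i-1})$ on a curve of length $\mathcal{O}(\varepsilon)$. These boundary functionals are paired with traces of $V$ and of $W$.
\end{itemize}

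The main obstacle is this corner or interface coupling. The fracture unknown $W$ enters the bulk boundary terms, and $V$ enters the fracture boundary terms. We have to bound $\int_{\Gamma^{(i,j)}_\varepsilon}|V|\,|W|$, and the fracture trace is only controlled with the weight $\varepsilon^{-1/2}$ relative to the normalised norm. The plan is to use a trace inequality for $V$ restricted to a neighbourhood of size $\varepsilon^{1/2}$ of the corner, where $\int |V|^2\,dl$ is bounded by $C\big(\|\nabla V\|^2+\varepsilon^{-1/2}\|V\|^2\big)$ on that region. Near the corner we then integrate $r^{2(\mu_i-1)}$ over $r\in(\varepsilon,\varepsilon^{1/2})$ with length weight $dr/r$. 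This is where we expect the factor $\sqrt{|\log\varepsilon|}$ to appear, and together with the half-power trace loss it gives the terms $\varepsilon^{\mu_i-1/2}\sqrt{|\log\varepsilon|}$ in all three estimates.

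Finally, we collect everything, absorb the gradient terms on the left, apply Gronwall to obtain the maximum over $t\in[0,T]$, and divide the fracture norms by $\sqrt{|\mathcal{R}_\varepsilon|}$. This gives \eqref{main-1}--\eqref{main-3}.
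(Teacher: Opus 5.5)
Your overall architecture is the paper's: energy identities for both errors, Lipschitz bounds with the monotonicity conditions \eqref{pos-1}, \eqref{pos-2}, the thin-domain traces \eqref{m-2}--\eqref{m-3}, and Gronwall. However, the step that actually produces the term $\varepsilon^{\mu_i-\frac12}\sqrt{|\log\varepsilon|}$ is missing, and the mechanism you offer instead does not work.

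The critical coupling is not on the lateral sides, where the fracture flux carries $\varepsilon^2$ and $\varepsilon^2\int_{\Gamma^{(i,j)}_\varepsilon}V^{(i)}_\varepsilon W_\varepsilon$ is harmless. It is on the node interface. The fracture identity contains $\varepsilon\int_{\Gamma^{(i,0)}_\varepsilon}\partial_s\Phi_0\,V^{(i)}_\varepsilon W_\varepsilon\,dl_x$, which after Young's inequality and \eqref{m-1}, \eqref{m-3} leaves $\varepsilon\|V^{(i)}_\varepsilon\|^2_{L^2(\Gamma^{(i,0)}_\varepsilon\times(0,\tau))}$ on the right-hand side (cf.\ \eqref{th-8}).

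The paper controls this term with the logarithmic (two-dimensional capacity) trace estimate \eqref{trace-ineq}, $\|V\|_{L^2(\Gamma^{(i,0)}_\varepsilon)}\le C\sqrt{\varepsilon|\log\varepsilon|}\,\|\nabla V\|_{L^2(\Omega^{(i)}_\varepsilon)}$. It holds because $V$ vanishes on $\mathcal{B}_i$ and $\Gamma^{(i,0)}_\varepsilon$ is a curve of length $\mathcal{O}(\varepsilon)$ at the vertex. Combined with $\|\nabla V^{(i)}_\varepsilon\|\lesssim\varepsilon^{\mu_i-1}$, this gives $\|W_\varepsilon\|^2\lesssim\varepsilon^3+\varepsilon^{2\mu_i}|\log\varepsilon|$. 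Dividing by $|\mathcal{R}_\varepsilon|\sim\varepsilon$ yields \eqref{main-1}--\eqref{main-2}.

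You announce a uniform trace inequality on $\Omega^{(i)}_\varepsilon$ instead. With it, this term is only $\mathcal{O}(\varepsilon\|\nabla V\|^2)=\mathcal{O}(\varepsilon^{2\mu_i-1})$. The normalised $L^2$ error then degrades to $\varepsilon^{\mu_i-1}$ and the normalised gradient error to $\varepsilon^{\mu_i-3/2}$. The latter does not even tend to zero when $\mu_i<3/2$, which is unavoidable for some $i$ since $\theta_1+\theta_2+\theta_3=2\pi$. Your suggested origin of the logarithm also fails: $\int_\varepsilon^{\varepsilon^{1/2}}r^{2(\mu_i-1)}\,\frac{dr}{r}=\frac{\varepsilon^{\mu_i-1}-\varepsilon^{2\mu_i-2}}{2\mu_i-2}$ contains no $\log\varepsilon$ when $\mu_i>1$.

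Two further points need repair.

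\textbf{Coupling the two estimates.} Simply adding the bulk and fracture energy inequalities and applying Gronwall does not give the stated fracture rates. The bulk identity contains $\int_{\Gamma^{(i,j)}_\varepsilon}\partial_w\Psi^{(i,j)}\,W_\varepsilon V^{(i)}_\varepsilon\,dl_x$ with an $\mathcal{O}(1)$ coefficient, while $\|W\|^2_{L^2(\Gamma^{(i,j)}_\varepsilon)}\lesssim\varepsilon\|\nabla W\|^2+\varepsilon^{-1}\|W\|^2$. Unweighted addition therefore produces a Gronwall rate of order $\varepsilon^{-1}$. With the natural $\varepsilon^{-1}$-weighting of the fracture inequality, the normalised fracture errors are only bounded at the bulk level $\varepsilon^{\mu_i-1}$. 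The paper works sequentially instead:
\begin{enumerate}
\item It bounds $\|\nabla V\|$ in terms of traces of $W$, see \eqref{th-4}.
\item It inserts this into the fracture estimate, where those traces now carry the factor $\varepsilon^2|\log\varepsilon|$ and are absorbed via \eqref{m-2}--\eqref{m-3}.
\item Only then does it feed the resulting trace bounds for $W$ back into \eqref{th-4} and \eqref{th-5+}.
\end{enumerate}
You need such a bootstrap.

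\textbf{Source of $\varepsilon^{\mu_i-1}$.} This term does not come from the matching zone $\mathcal{D}^{(i)}_\varepsilon$; by \eqref{as-est-2}--\eqref{as-est-3}, and by your own count, those terms are $o(\varepsilon)$. It comes from the residual $D_i\nabla u^{(i)}_0\cdot\boldsymbol{\nu}_\varepsilon$ on $\Gamma^{(i,j)}_\varepsilon\cap\{|x|\le\delta\}$, see \eqref{th-2}. That residual is $\mathcal{O}(\varepsilon^{\mu_i-1})$ for three reasons:
\begin{itemize}
\item the Neumann datum of $u^{(i)}_0$ vanishes on the graph edges near the vertex;
\item $\nabla u^{(i)}_0$ is H\"older continuous with exponent $\mu_i-1$;
\item $\Gamma^{(i,j)}_\varepsilon$ lies at distance $\varepsilon h_j$ from the edge.
\end{itemize}
The size bound $|\nabla u^{(i)}_0|=\mathcal{O}(r^{\mu_i-1})$ alone is not small there. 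What you need is the vanishing of the normal component on the edge.
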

	\begin{remark}
		Hereinafter, constants in all inequalities are positive and independent of the parameter $\varepsilon;$ constants with identical indices may differ between inequalities.
	\end{remark}
	\begin{proof} 	\textbf{1.} \emph{Estimates in $\Omega_\varepsilon^{(i)}\times (0,T)$.}
		We present the proof in $\Omega_\varepsilon^{(1)}\times (0,T)$; the arguments in 
		$\Omega_\varepsilon^{(2)}\times (0,T)$ and $\Omega_\varepsilon^{(3)}\times (0,T)$
		are identical.  Subtracting  the corresponding differential equation of problem $\mathbb{P}_\varepsilon$ from \eqref{res-reg-0}, \eqref{res-reg-1} and \eqref{as-est-4},  multiplying the resulting identities by $Z_\varepsilon := \mathfrak{U}^{(1)}_\varepsilon - u^{(1)}_\varepsilon,$ and integrating by parts using \eqref{as-est-bd-2} -- \eqref{as-est-bd-5} and applying  the mean value theorem to the nonlinear terms, we obtain 
		\begin{multline}\label{th-1}
			\frac{1}{2} \int_{\Omega_\varepsilon^{(1)}}Z_\varepsilon^2\big|_{t=\tau}\, dx + D_1 \|\nabla Z_\varepsilon\|^2_{L^2(\Omega_\varepsilon^{(1)}\times(0,\tau))} + \int\limits_{\Omega_\varepsilon^{(1)}\times(0,\tau)} \partial_s F_1 \, Z_\varepsilon^2 \, dxdt
			\\
			+ \sum_{j=1}^{2} \int\limits_{\Gamma_\varepsilon^{(1,j)}\times(0,\tau)}\Big(\partial_s \Psi^{(1,j)} \, Z_\varepsilon +  \partial_w \Psi^{(1,j)} \, Y_\varepsilon\Big) Z_\varepsilon \, dl_xdt +  \sum_{j=1}^{2} \int\limits_{(\Gamma_\varepsilon^{(1,j)} \cap \{|x| \le \delta\})\times(0,\tau)}\big(D_1\nabla_x u_0^{(1)}\cdot\boldsymbol{\nu}_\varepsilon \big) \, 
			Z_\varepsilon \, dl_xdt 
			\\
			+ \int\limits_{\Gamma_\varepsilon^{(1,0)}\times(0,\tau)}\Big(\partial_s \Psi_0 \, Z_\varepsilon +  \partial_w \Psi_0 \, Y_\varepsilon\Big) Z_\varepsilon \, dl_xdt +  \int\limits_{\Gamma_\varepsilon^{(1,0)} \times(0,\tau)}\Big(D_1\nabla_x u_0^{(1)}\cdot\boldsymbol{\nu}_\varepsilon \Big) Z_\varepsilon \, dl_xdt 
			\\
			= \varepsilon\,  D_1 \int\limits_{\mathcal{D}^{(1)}_\varepsilon \times (0,\tau)}\Big(\big(u_1^{(1)} - \mathcal{K}_1^{(1)}\big) \, \nabla_x\big(\chi_b(\tfrac{|x|}{\varepsilon^{1/2}})\cdot \nabla_x Z_\varepsilon - Z_\varepsilon \nabla_x\big(u_1^{(1)} - \mathcal{K}_1^{(1)}\big)\cdot\nabla_x\big(\chi_b(\tfrac{|x|}{\varepsilon^{1/2}})\Big)dxdt
			\\
			+ \varepsilon |\log\varepsilon| \int\limits_{\Omega_\varepsilon^{(1)}\times(0,\tau)}\mathfrak{O}_\varepsilon^{(1)} \, Z_\varepsilon \, dxdt + \varepsilon \sum_{j=0}^{2} \, \int\limits_{\Gamma_\varepsilon^{(1,j)}\times(0,\tau)}\mathfrak{O}_\varepsilon^{(1,j)} \, Z_\varepsilon \, dl_xdt.
		\end{multline} 
		Here, $Y_\varepsilon := \mathfrak{A}_\varepsilon - w_\varepsilon,$	$\delta$ is chosen before \eqref{as-est-bd-1}, the integrands in the last line are uniformly bounded residuals (see Remark~\ref{rem-3-1}, 
		\eqref{as-est-1}, \eqref{as-est-4}, \eqref{as-est-bd-2}, \eqref{as-est-bd-2+}, \eqref{as-est-bd-5}) and we take the smallest order of $\varepsilon.$

		On $\mathcal{I}_j \cap \{|x| \le \delta\}, \ j\in \{1,2\},$ the normal derivative of $u_0^{(1)}$ is equal to zero.  
		Therefore, taking into account the Hölder regularity of $u_0^{(1)}$ (see \eqref{hoelder-0}), we derive
		\begin{equation}\label{th-2}
			\bigg|\int_{(\Gamma_\varepsilon^{(1,j)} \cap \{|x| \le \delta\})\times(0,\tau)}\big(D_1\nabla_x u_0^{(1)}\cdot\boldsymbol{\nu}_\varepsilon \big)\,  Z_\varepsilon \, dl_xdt\bigg| \le C_1 \, \varepsilon^{\mu_1 -1}\, \|\nabla Z_\varepsilon\|_{L^2(\Omega_\varepsilon^{(1)}\times(0,\tau))}, 
		\end{equation}
		where $\mu_1 \in (1, \tfrac{\pi}{\theta_1}).$ 
		
		Based on the Hölder regularity of $u_0^{(1)}$ and \eqref{asym-in-coner},  
		the normal derivative $
		\nabla_x u_0^{(1)}\cdot\boldsymbol{\nu}_\varepsilon  
		$ on $\Gamma_\varepsilon^{(1,0)}$ is of order $\varepsilon^{\tfrac{\pi}{\theta_1} -1}.$
		Using the fact that $Z_\varepsilon|_{x\in \mathcal{B}_1} = 0,$ it is easy to verify that 
		\begin{equation}\label{trace-ineq}
			\|Z_\varepsilon\|_{L^2(\Gamma_\varepsilon^{(1,0)})} \le C_2 \, \sqrt{\varepsilon \, |\log\varepsilon|} \, 
			\|\nabla Z_\varepsilon\|_{L^2(\Omega_\varepsilon^{(1)})}.
		\end{equation}
		Therefore,  
		\begin{align}\label{th-3}
			\bigg|\int_{\Gamma_\varepsilon^{(1,0)} \times(0,\tau)}\Big(D_1\nabla_x u_0^{(1)}\cdot\boldsymbol{\nu}_\varepsilon \Big) Z_\varepsilon \, dl_xdt\bigg| &\le C_3 \, \varepsilon^{\tfrac{\pi}{\theta_1} -1} \, \varepsilon^{1/2} \, \big(\varepsilon \, |\log\varepsilon|\big)^{1/2}  \|\nabla Z_\varepsilon\|_{L^2(\Omega_\varepsilon^{(1)}\times(0,\tau))} \notag
			\\
			& = C_3 \, \varepsilon^{\tfrac{\pi}{\theta_1}} \big(|\log\varepsilon|\big)^{1/2} \, \|\nabla Z_\varepsilon\|_{L^2(\Omega_\varepsilon^{(1)}\times(0,\tau))}. 
		\end{align}
		
		Based on assumptions ${\bf A1},$ ${\bf A3}$ and ${\bf A4}$ for the functions $F_i,$ $\Psi^{(i,j)},$ $\Psi_0,$ and using \eqref{th-2}, \eqref{trace-ineq}, \eqref{th-3}, \eqref{as-est-2} and \eqref{as-est-3}, we first derive from \eqref{th-1} the  inequality 
		\begin{equation}\label{th-4}
			\|\nabla Z_\varepsilon\|_{L^2(\Omega_\varepsilon^{(1)}\times(0,\tau))} \le C_3 \Big(\varepsilon^{\mu_1 -1} + \varepsilon |\log\varepsilon| + \sum_{j=1}^{2} \|Y_\varepsilon\|_{L^2(\Gamma_\varepsilon^{(1,j)}\times (0,\tau))} +
			\sqrt{\varepsilon \, |\log\varepsilon|} \,\|Y_\varepsilon\|_{L^2(\Gamma_\varepsilon^{(1,0)}\times (0,\tau))}
			\Big).
		\end{equation}
		Substituting it back into \eqref{th-1} yields  
		\begin{align*}
			\int_{\Omega_\varepsilon^{(1)}}Z_\varepsilon^2\big|_{t=\tau}\, dx  & \le C_4 \Big(
			\varepsilon |\log\varepsilon| \int_{\Omega_\varepsilon^{(1)}\times(0,\tau)} Z^2_\varepsilon \, dxdt 
			+ 
			\varepsilon^{2(\mu_1 -1)} + \varepsilon^2 |\log\varepsilon|^2 \notag
			\\
			&  + \sum_{j=1}^{2} \|Y_\varepsilon\|^2_{L^2(\Gamma_\varepsilon^{(1,j)}\times (0,\tau))} + \varepsilon \, |\log\varepsilon| \,\|Y_\varepsilon\|^2_{L^2(\Gamma_\varepsilon^{(1,0)}\times (0,\tau))}\Big),
		\end{align*}
		and Gronwall's inequality gives 
		\begin{equation}\label{th-5+}
			\max_{t\in[0,T]}\|Z_\varepsilon(\cdot,t)\|_{L^2(\Omega_\varepsilon^{(1)})} \le C_5 \Big(
			\varepsilon^{\mu_1 -1} + \varepsilon |\log\varepsilon| 
			+ \sum_{j=1}^{2} \|Y_\varepsilon\|_{L^2(\Gamma_\varepsilon^{(1,j)}\times (0,T))} + \sqrt{\varepsilon \, |\log\varepsilon|} \,\|Y_\varepsilon\|_{L^2(\Gamma_\varepsilon^{(1,0)}\times (0,\tau))}
			\Big).
		\end{equation}

		
		\textbf{2.} \emph{Estimates in $\mathcal{R}_\varepsilon\times (0,T)$.} 
		Multiplying \eqref{dif_3}--\eqref{dif_1} by  $Y_\varepsilon = \mathfrak{A}_\varepsilon - w_\varepsilon,$  integrating over the corresponding domain and over $ (0, \tau),$ where $\tau$ is an arbitrary number from $(0, T),$ and integrating by parts and taking \eqref{lap_1}--\eqref{grad_1} and the boundary conditions and initial condition into account, we obtain
		\begin{align}\label{th-6}
			\frac{1}{2}\, \big\|Y_\varepsilon\big|_{t=\tau}\big\|^2_{L^2(\mathcal{R}_\varepsilon)}
			& + \varepsilon \, \big\| {\nabla Y_\varepsilon}\big\|^2_{L^2(\mathcal{R}_\varepsilon\times (0, \tau))} \notag
			\\
			&+ 
			\varepsilon^2 \sum_{i=1}^{3} \sum_j\int_{0}^{\tau}\bigg(\int_{\Gamma^{(i,j)}_\varepsilon}
			\Big(\partial_s\Phi^{(i,j)}\, \big(\mathfrak{U}^{(i)}_\varepsilon - u^{(i)}_\varepsilon\big) + \partial_w\Phi^{(i,j)}\, Y_\varepsilon  \Big) Y_\varepsilon \, d y_1^{(j)}\bigg)dt \notag
			\\
			&+ \varepsilon \sum_{i=1}^{3}\int_{0}^{\tau}\bigg(\int_{\Gamma^{(i,0)}_\varepsilon}\Big(
			\partial_s\Phi_0\, \big(\mathfrak{U}^{(i)}_\varepsilon - u^{(i)}_\varepsilon\big) + \partial_w\Phi_0\, Y_\varepsilon\Big) Y_\varepsilon \, dl_x\bigg)dt \notag
			\\
			& = 
			\int_{0}^{\tau} \bigg(\int_{\mathcal{R}^{(0)}_\varepsilon} Y_\varepsilon\,
			\overrightarrow{V_\varepsilon} \cdot
			{\nabla Y_\varepsilon} \, dx + \sum_{i=1}^{3}\int_{\mathcal{R}^{(i)}_\varepsilon} Y_\varepsilon\,
			\overrightarrow{V_\varepsilon} \cdot
			{\nabla Y_\varepsilon} \, dy^{(i)}\bigg) dt \notag
			\\
			& +
			\int_{0}^{\tau}\bigg(
			\varepsilon \int\limits_{\mathcal{R}_{\varepsilon, \gamma}^{(0)}}  \digamma^{(0)}_\varepsilon \, Y_\varepsilon \, dx
			+ \varepsilon^{{\gamma}} \sum_{i=1}^{3} \int\limits_{\mathcal{R}^{(i)}_{\varepsilon,2\ell_0,3\ell_0,\gamma}}
			\digamma^{(i)}_\varepsilon \, Y_\varepsilon \, dy^{(i)} + \varepsilon^2 \sum_{i=1}^{3} \int\limits_{\mathcal{R}^{(i)}_{\varepsilon,3\ell_0,\gamma}}  \digamma^{(i)}_\varepsilon \, Y_\varepsilon \, dy^{(i)}\bigg)dt \notag
			\\
			& - \varepsilon^2 \sum_{i=1}^{3}\int_{0}^{\tau} \int_{\Gamma^{(i,0)}_\varepsilon}\gimel^{(i)}_\varepsilon \,  Y_\varepsilon \, dl_x\, dt
			- \varepsilon^3 \sum_{i=1}^{3} \sum_j\int_{0}^{\tau} \int_{\Gamma^{(i,j)}_\varepsilon}
			\gimel^{(i,j)}_\varepsilon \,  Y_\varepsilon \, d y_1^{(j)} \, dt .
		\end{align}
		To estimate integrals in the second, third and the last lines of \eqref{th-6} we will use the following inequalities:
		\begin{gather}\label{m-1}
			\int_{\mathcal{R}_\varepsilon} u^2 \, dx \le C \, \int_{\mathcal{R}_\varepsilon} |\nabla_x u|^2 \, dx, \qquad
			\int_{\mathcal{R}^{(0)}_\varepsilon} u^2 \, dx \le C\, \varepsilon \,  \int_{\mathcal{R}_\varepsilon} |\nabla_x u|^2 \, dx,	
			\\\label{m-2}
			\varepsilon\, \int_{\Gamma^{(i,j)}_\varepsilon} u^2 \, dx \le C \bigg(\varepsilon^2 \int_{\mathcal{R}^{(i)}_\varepsilon} |\nabla_x u|^2 \, dx + \int_{\mathcal{R}^{(i)}_\varepsilon} u^2 \, dx\bigg),
			\\\label{m-3}
			\varepsilon\, \int_{\Gamma^{(i,0)}_\varepsilon} u^2 \, dx \le C \bigg(\varepsilon^2 \int_{\mathcal{R}^{(0)}_\varepsilon} |\nabla_x u|^2 \, dx + \int_{\mathcal{R}^{(0)}_\varepsilon} u^2 \, dx\bigg)
		\end{gather}   
		for any function $u \in H^1(\mathcal{R}_\varepsilon)$ whose traces on $\{\Upsilon_\varepsilon^{(i)}(\ell_i)\}_{i=1}^3$ vanish (for more detail see \cite[Sect.2]{Mel-AA-2021}). 
		
		The advection terms in the fourth line of \eqref{th-6} are estimated using the properties of the advection field $\overrightarrow{V_\varepsilon}$ (see \S~\ref{convextion-flux}) and Cauchy’s inequality with any $\lambda > 0$ $(ab \le \lambda a^2 + b^2/4\lambda),$ giving
		$$
			\bigg|\int_{0}^{\tau} \bigg(\int_{\mathcal{R}^{(0)}_\varepsilon} Y_\varepsilon\,
			\overrightarrow{V_\varepsilon} \cdot
			{\nabla Y_\varepsilon} \, dx + \sum_{i=1}^{3}\int_{\mathcal{R}^{(i)}_\varepsilon} Y_\varepsilon\,
			\overrightarrow{V_\varepsilon} \cdot
			{\nabla Y_\varepsilon} \, dy^{(i)}\bigg) dt\bigg|
		$$
		$$
			=	\bigg|\int_{0}^{\tau} \bigg(\frac12\int_{\mathcal{R}^{(0)}_\varepsilon}\mathrm{div}_x\big(Y^2_\varepsilon\, \overrightarrow{V_\varepsilon}\big)\, dx + \sum_{i=1}^{3}\int_{\mathcal{R}^{(i)}_\varepsilon}\Big(\frac12\, v^{(i)}_1 \, \partial_{y_1^{(i)}}Y_\varepsilon^2  + \varepsilon \, Y_\varepsilon \, v^{(i)}_2 \partial_{y_2^{(i)}}Y_\varepsilon\Big) dy^{(i)}\bigg) dt\bigg|
		$$
		$$
			\le \int_{0}^{\tau} \bigg(\varepsilon\, \lambda \sum_{i=1}^{3}\Big(\int_{\mathcal{R}^{(i)}_\varepsilon} |\partial_{y_2^{(i)}}Y_\varepsilon|^2  + 
			\frac{\varepsilon \, C_1}{4 \lambda} \, Y_\varepsilon^2 + C_2 Y_\varepsilon^2\Big) dy^{(i)}\bigg) dt.
		$$
		
		Collecting all contributions, including assumptions $\mathbf{A3}$ and $\mathbf{A4}$ for the functions $\Phi_0$ and $\Phi^{(i,j)},$ and choosing $\lambda$ appropriately, we deduce from \eqref{th-6} the inequality    
		\begin{multline}\label{th-8}
			\big\|Y_\varepsilon(\cdot,t)\big\|^2_{L^2(\mathcal{R}_\varepsilon)}\Big|_{t=\tau}
			+ \varepsilon \, \big\| {\nabla Y_\varepsilon}\big\|^2_{L^2(\mathcal{R}_\varepsilon\times (0, \tau))} \le C_1 \bigg(\int_{0}^{\tau} \big\|Y_\varepsilon(\cdot,t)\big\|^2_{L^2(\mathcal{R}_\varepsilon)} dt + \varepsilon^3
			\\
			+
			\varepsilon^3 \sum_{i=1}^{3} \sum_j\int_{0}^{\tau}\int_{\Gamma^{(i,j)}_\varepsilon}
			\big(\mathfrak{U}^{(i)}_\varepsilon - u^{(i)}_\varepsilon\big)^2\, d y_1^{(j)}dt
			+  \varepsilon \sum_{i=1}^{3}\int_{0}^{\tau}\int_{\Gamma^{(i,0)}_\varepsilon}\big(\mathfrak{U}^{(i)}_\varepsilon - u^{(i)}_\varepsilon\big)^2\, dl_xdt\bigg)
			\\
			\stackrel{\eqref{trace-ineq}}{\le}
			C_2 \bigg(\int_{0}^{\tau} \big\|Y_\varepsilon(\cdot,t)\big\|^2_{L^2(\mathcal{R}_\varepsilon)} dt + \varepsilon^3
			+
			(\varepsilon^3 + \varepsilon^2 |\log\varepsilon|) \sum_{i=1}^{3} \big\|\nabla_x(\mathfrak{U}^{(i)}_\varepsilon - u^{(i)}_\varepsilon)\big\|^2_{L^2(\Omega_\varepsilon^{(i)}\times(0,\tau))}\bigg)
			\\
			\stackrel{\eqref{th-4}}{\le}
			C_3 \bigg(\int_{0}^{\tau} \big\|Y_\varepsilon(\cdot,t)\big\|^2_{L^2(\mathcal{R}_\varepsilon)} dt + \varepsilon^3
			+
			\\
			\varepsilon^2 |\log\varepsilon| \Big(
			\sum_{i=1}^{3}\varepsilon^{2(\mu_i -1)} + 
			\varepsilon^2 |\log\varepsilon|^2 
			+ \sum_{i=1}^{3}\sum_{j} \|Y_\varepsilon\|^2_{L^2(\Gamma_\varepsilon^{(i,j)}\times (0,\tau))} + \varepsilon \, |\log\varepsilon| \sum_{i=1}^{3}\|Y_\varepsilon\|^2_{L^2(\Gamma_\varepsilon^{(i,0)}\times (0,\tau))}
			\Big)\bigg),
		\end{multline}
		where $\mu_i \in (1, \tfrac{\pi}{\theta_i}).$

		Using \eqref{m-2}--\eqref{m-3} and absorbing gradient terms for  sufficiently small $\varepsilon,$ we arrive  the inequality
		\begin{equation}\label{th-9}
			\big\|Y_\varepsilon(\cdot,t)\big\|^2_{L^2(\mathcal{R}_\varepsilon)}\Big|_{t=\tau}
			+ \varepsilon \, \big\| {\nabla Y_\varepsilon}\big\|^2_{L^2(\mathcal{R}_\varepsilon\times (0, \tau))} \le C_1 \bigg(\int_{0}^{\tau} \big\|Y_\varepsilon(\cdot,t)\big\|^2_{L^2(\mathcal{R}_\varepsilon)} dt + \varepsilon^3 + \sum_{i=1}^{3}\varepsilon^{2 \mu_i} |\log\varepsilon| \bigg). 	
		\end{equation}

		Applying  Gronwall's inequality in time to \eqref{th-9} leads first to the inequality
		\begin{equation}\label{th-10}
			\max_{t\in[0,T]} \|Y_\varepsilon(\cdot,t)\|^2_{L^2(\mathcal{R}_\varepsilon)} \le C_2 \Big(\varepsilon^3 +  \sum_{i=1}^{3} \varepsilon^{2 \mu_i} |\log\varepsilon|\Big)
		\end{equation}
		and then to 
		\begin{equation}\label{th-11}
			\big\| {\nabla Y_\varepsilon}\big\|^2_{L^2(\mathcal{R}_\varepsilon\times (0, T))} \le C_3
			\Big(\varepsilon^2 + \sum_{i=1}^{3} \varepsilon^{2 \mu_i - 1} |\log\varepsilon| 
			\Big),
		\end{equation}	
		which imply \eqref{main-1} and \eqref{main-2}.

		\textbf{3.} \emph{Final estimates in $\Omega_\varepsilon^{(i)}\times (0,T)$.} 
		From \eqref{th-10}--\eqref{th-11} and \eqref{m-2}--\eqref{m-3} we obtain the boundary estimates 
		\begin{equation*}
			\|Y_\varepsilon\|_{L^2(\Gamma_\varepsilon^{(1,j)}\times (0,T))} + \|Y_\varepsilon\|_{L^2(\Gamma_\varepsilon^{(1,0)}\times (0,T))} \le C_4
			\Big(\varepsilon + \sum_{i=1}^{3} \varepsilon^{\mu_i - \frac12} \sqrt{|\log\varepsilon|} 
			\Big)
		\end{equation*}
		for all admissible values of the index $j.$ 	
		Substituting them in \eqref{th-4} and \eqref{th-5+} gives 
		\begin{equation}\label{th-12}
			\max_{t\in[0,T]}\|Z_\varepsilon(\cdot,t)\|_{L^2(\Omega_\varepsilon^{(1)})} +	\|\nabla Z_\varepsilon\|_{L^2(\Omega_\varepsilon^{(1)}\times(0,T))} \le C_3 \Big(\varepsilon^{\mu_1 -1} + \varepsilon |\log\varepsilon| + \sum_{i=2}^{3} \varepsilon^{\mu_i - \frac12} \sqrt{|\log\varepsilon|}
			\Big).
		\end{equation}
		Similar considerations apply for $\mathfrak{U}^{(i)}_\varepsilon - u^{(i)}_\varepsilon,$ $i \in \{2, 3\},$  which completes the proof. 
	\end{proof}
	\begin{remark}
		The term $\varepsilon^{\mu_i - \frac12} \sqrt{|\log\varepsilon|}$ is of lower order than $\varepsilon^{\mu_i - 1}$  as $\varepsilon \to 0,$	
		so it may be omitted in \eqref{main-3} as in \eqref{th-12}. It is included only to unify the notation.	
	\end{remark}	
	
	\begin{corollary}\label{corollary-4-1}
		As a consequence of \eqref{main-1} and \eqref{main-3}, we obtain the following estimates for the differences between the solution of the original problem $\mathbb{P}_\varepsilon$ and the zeroth-order approximation:  
		\begin{equation}\label{main-1+0}
			\tfrac{1}{\sqrt{|\mathcal{R}_{\varepsilon}|}}\,\max_{t\in[0,T]} \big\|\mathfrak{A}_{\varepsilon,0}(\cdot,t) - w_\varepsilon(\cdot,t)\big\|_{L^2(\mathcal{R}_\varepsilon)} \le C_1 \Big(\varepsilon +  \sum_{i=1}^{3} \varepsilon^{\mu_i - \frac12} \sqrt{|\log\varepsilon|}\Big),
		\end{equation}
		\begin{equation}\label{main-3+0}
			\max_{t\in[0,T]}\big\|u^{(i)}_0(\cdot,t) - u^{(i)}_\varepsilon(\cdot,t)\big\|_{L^2(\Omega_\varepsilon^{(i)})} 
			\le C_2 \Big(\varepsilon^{\mu_i -1} + \varepsilon |\log\varepsilon| + \sum_{\imath =1}^{3} \varepsilon^{\mu_\imath - \frac12} \sqrt{|\log\varepsilon|}
			\Big) \quad \text{for} \ \ i\in \{1,2,3\},
		\end{equation}	
		where 
		\begin{equation}\label{first_app+0}
			\mathfrak{A}_{\varepsilon,0} =
			\left\{
			\begin{array}{ll}
				w^{(1)}_0(y^{(1)}, t) & \text{in} \  \  \mathcal{R}^{(1)}_{\varepsilon,3\ell_0,\gamma}\times[0,T], 
				\\[4pt]
				w^{(i)}_0(y^{(i)}, t) + \chi_\delta^{(i)}(y^{(i)}_1) \, \Pi_0^{(i)}\!\Big(
				\tfrac{\ell_i - y^{(i)}_1}{\varepsilon},
				\tfrac{y^{(i)}_2}{\varepsilon}, t\Big)& \text{in} \ \
				\mathcal{R}^{(i)}_{\varepsilon,3\ell_0,\gamma}\times[0,T], \ \ i\in\{2, 3\},
				\\[4pt]
				w^{(1)}_0(0, t) & \text{in} \ \  \mathcal{R}^{(0)}_{\varepsilon, \gamma}\times[0,T],
				\\[4pt]
				\chi_{\ell_0}\big(\frac{y^{(i)}_1}{\varepsilon^\gamma}\big)\, w^{(i)}_0(y^{(i)}, t) +
				\Big(1- \chi_{\ell_0}\big(\frac{y^{(i)}_1}{\varepsilon^\gamma}\big)\Big)\, w^{(1)}_0(0, t) & \text{in} \ \
				\mathcal{R}^{(i)}_{\varepsilon,2\ell_0,3\ell_0,\gamma }\times[0,T], \ \ i\in\{1,2,3\},
			\end{array}
			\right.
		\end{equation}
		$\{w^{(i)}_0\}_{i=1}^3$ solves the limit hyperbolic problem \eqref{limit_prob}, and $u^{(i)}_0$ solves the limit problem \eqref{relations+}. 
	\end{corollary}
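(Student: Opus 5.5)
The plan is to derive Corollary~\ref{corollary-4-1} from Theorem~\ref{apriory_estimate} by the triangle inequality. Write
\[
\mathfrak{A}_{\varepsilon,0}-w_\varepsilon=(\mathfrak{A}_{\varepsilon,0}-\mathfrak{A}_\varepsilon)+(\mathfrak{A}_\varepsilon-w_\varepsilon),
\qquad
u^{(i)}_0-u^{(i)}_\varepsilon=(u^{(i)}_0-\mathfrak{U}^{(i)}_\varepsilon)+(\mathfrak{U}^{(i)}_\varepsilon-u^{(i)}_\varepsilon).
\]
The second summands are controlled by \eqref{main-1} and \eqref{main-3}. What remains is to show that the discarded higher-order terms are no larger than the right-hand sides of \eqref{main-1+0} and \eqref{main-3+0}.

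\emph{Junction.} By \eqref{first_app}, \eqref{junc+1}, \eqref{regul-1+}, \eqref{regul-2+} and \eqref{prim+}, the difference $\mathfrak{A}_\varepsilon-\mathfrak{A}_{\varepsilon,0}$ consists of three kinds of terms:
\begin{itemize}
\item $\varepsilon(w^{(i)}_1+z^{(i)}_1)+\varepsilon^2 z^{(i)}_2$ in the branches;
\item $\varepsilon\chi^{(i)}_\delta(\Pi^{(i)}_1+\varepsilon\Pi^{(i)}_2)$ near the outlets;
\item $\varepsilon N_1$ near the node, together with the convex combinations of these in the transition zones $\mathcal{R}^{(i)}_{\varepsilon,2\ell_0,3\ell_0,\gamma}$.
\end{itemize}
All coefficients are uniformly bounded on their supports: $w^{(i)}_1,z^{(i)}_k$ by Remark~\ref{rem-2-3}, the $\Pi^{(i)}_k$ by \eqref{exp_decay}. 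For $N_1$, the expansion \eqref{exp-decrease+1} gives boundedness on the region $|\xi|\lesssim\varepsilon^{\gamma-1}$, because there the term $\Theta^{(i)}_1$ is $\mathcal{O}(\varepsilon^{\gamma-1})$, and after multiplication by $\varepsilon$ it becomes $\mathcal{O}(\varepsilon^\gamma)$. This is the one point needing care. It is still harmless, since the region where it occurs has measure $\mathcal{O}(\varepsilon^{1+\gamma})$.

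Hence $\|\mathfrak{A}_\varepsilon-\mathfrak{A}_{\varepsilon,0}\|_{L^2(\mathcal{R}_\varepsilon)}\le C\varepsilon\sqrt{|\mathcal{R}_\varepsilon|}$ uniformly in $t$. After normalisation by $|\mathcal{R}_\varepsilon|^{-1/2}$ this is $\mathcal{O}(\varepsilon)$, and \eqref{main-1+0} follows.

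\emph{Bulk.} By \eqref{approxim-3-1}, $\mathfrak{U}^{(i)}_\varepsilon-u^{(i)}_0=\varepsilon\big(\chi_b u^{(i)}_1+(1-\chi_b)\mathcal{K}^{(i)}_1\big)$. Away from the corner, $u^{(i)}_1$ is bounded by Remark~\ref{rem-2-8}. Near the origin, \eqref{asym-5} gives $|u^{(i)}_1|\le C(1+|\log r|)$, which is square integrable in two dimensions, so $\|\varepsilon\chi_b u^{(i)}_1\|_{L^2}\le C\varepsilon$.

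For the corner term on $|x|\le4\varepsilon^{1/2}$, use \eqref{asym-3} with the choice \eqref{Lambda}. This gives $\mathcal{K}^{(i)}_1=C^{(i)}_1\log(|x|/\varepsilon)+C^{(i)}_1\log\varepsilon+\widetilde C^{(i)}_1+\mathcal{O}(1)=\mathcal{O}(|\log\varepsilon|)$ on a set of area $\mathcal{O}(\varepsilon)$. Its contribution is therefore $\mathcal{O}(\varepsilon^{3/2}|\log\varepsilon|)$.

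Thus $\max_t\|\mathfrak{U}^{(i)}_\varepsilon-u^{(i)}_0\|_{L^2(\Omega^{(i)}_\varepsilon)}\le C\varepsilon$. Since $\varepsilon\le\varepsilon|\log\varepsilon|$ for small $\varepsilon$, combining this with \eqref{main-3} gives \eqref{main-3+0}.

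The main obstacle is the uniform boundedness of the node-layer and corner-layer contributions, which are the growing $\Theta^{(i)}_1$ part of $N_1$ and the $\log\varepsilon$ shift in $\Lambda^{(i)}_1$. It is resolved, as above, by noting that these terms are confined to small sets whose measure compensates the growth.
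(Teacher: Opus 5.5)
Your argument is correct and is what the paper means by ``as a consequence of \eqref{main-1} and \eqref{main-3}'' (the paper gives no further proof). You apply the triangle inequality and check that the discarded terms contribute at most $\mathcal{O}(\varepsilon)$ after normalisation: in $\mathcal{R}_\varepsilon$ these are the branch, boundary-layer and node-layer correctors, and in the bulk it is $\varepsilon\big(\chi_b u^{(i)}_1+(1-\chi_b)\mathcal{K}^{(i)}_1\big)$. One point should be made explicit: for the growing part $\varepsilon\Theta^{(i)}_1=\mathcal{O}(\varepsilon^\gamma)$ of $\varepsilon N_1$, your measure argument bounds the squared norm by $\varepsilon^{2\gamma}\cdot\varepsilon^{1+\gamma}=\varepsilon^{1+3\gamma}$, and this is $\le\varepsilon^2|\mathcal{R}_\varepsilon|\sim\varepsilon^3$ precisely because of the standing restriction $\gamma>\tfrac23$.
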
	
	
	
	\section{Conclusion and Perspectives}\label{Sect-5}
	
	\textbf{1.} 	
	In this work we developed a rigorous multiscale framework for the analysis of
	transport in planar domains with micro‑branched fractures. Beyond the simple
	geometry considered here, the method extends naturally to other configurations
	of micro‑defects. Our earlier works demonstrate that the same multiscale
	strategy applies to curvilinear thin branches \cite{Mel-Roh_AA-2024}, to
	networks of thin branches connected by small nodes of diameter
	$\mathcal{O}(\varepsilon)$ \cite{Mel-Roh_JMAA-2024}, to more general diffusion
	operators in the fracture and to time‑dependent convection fields
	\cite{Mel-Roh_JMAA-2024}, and even to diffusion–reaction–advection systems for
	multiple dissolved substances \cite{Mel-Pop-Rohde-preprint-2026}.
	
	The analysis shows that the geometry of a crack network induces singular behavior of solutions both near the junction points and within them. This requires a careful matching of the corner and outer bulk ansatzes (see \S~\ref{corner-2-4}), as well as the inner two-scaled ansatzes with the node-layer ansatz in the fracture (see \S~\ref{Par-3-1}). A key outcome  is the identification of the logarithmic
	singularity and the precise matching constant $\Lambda^{(i)}_1(t)$ linking the bulk and corner regimes.
	
	The approach also reveals the hyperbolic interface dynamics driven by strong
	advection in the fracture $\mathcal{R}_\varepsilon$. As a consequence of this
	strong advection, singular behaviour appears on the outflow branches near their
	bases, which is captured by the boundary–layer ansatz constructed in
	\S~\ref{subsec_Bound_layer}.
	
	The global approximation developed in Sect.~\ref{Sect-4}, together with the
	asymptotic estimates rigorously proven therein (Theorem~\ref{apriory_estimate}
	and Corollary~\ref{corollary-4-1}), reveals the structure of the solution and
	its gradient in all parts of the fractured medium and shows how the fracture
	geometry influences the asymptotic behavior of the model. A notable feature of
	these estimates is the explicit dependence of their orders on the corner angles
	$\theta_1,\theta_2,\theta_3$ between the edges of the graph, which originates
	from the singular behavior of the bulk solutions to problems~\eqref{relations+}
	near the corner points. For $T$-shaped fractures, however, these corner
	singularities disappear, the bulk solutions become regular, and an improvement
	in the order of the asymptotic estimates is therefore expected.

	\smallskip 
	
	\textbf{2.} The analysis can be adapted to different scaling regimes in the interface conditions \eqref{int-1} and \eqref{int-2}. However, when a problem involves additional parameters, it is necessary to modify the asymptotic scale by adjusting it precisely to account for these
	parameters (see, e.g., \cite{Mel-AA-2021,Mel-Roh_JMAA-2024},
	\cite[Sect.~6]{Mel-Roh_Non-Diff-2024}).
	If, for instance, $\alpha\in(\tfrac{3}{2},2)$ and $\beta =1,$ then the appropriate two-scaled ansatz in each thin branch $\mathcal{R}^{(i)}_\varepsilon$ takes the form
	\begin{multline}\label{reg-1}
		\mathfrak{W}^{(i)}_\varepsilon =
		w_0^{(i)}(y^{(i)}_1,t)
		+ \varepsilon^{\alpha-1} w_{\alpha-1}^{(i)}(y^{(i)}_1,t)
		+ \varepsilon\Big(
		w_1^{(i)}(y^{(i)}_1,t)
		+ z_1^{(i)}\big(y^{(i)}_1,\tfrac{y^{(i)}_2}{\varepsilon},t\big)
		\Big)
		\\
		+ \varepsilon^{2\alpha-2} w_{2\alpha-2}^{(i)}(y^{(i)}_1,t)
		+ \varepsilon^\alpha\Big(
		w_\alpha^{(i)}(y^{(i)}_1,t)
		+ z_\alpha^{(i)}\big(y^{(i)}_1,\tfrac{y^{(i)}_2}{\varepsilon},t\big)
		\Big)
		+ \varepsilon^2 z_2^{(i)}\big(y^{(i)}_1,\tfrac{y^{(i)}_2}{\varepsilon},t\big)
		\\
		+ \varepsilon^{2\alpha-1} z_{2\alpha-1}^{(i)}\big(y^{(i)}_1,\tfrac{y^{(i)}_2}{\varepsilon},t\big)
		+ \varepsilon^{\alpha+1} z_{\alpha+1}^{(i)}\big(y^{(i)}_1,\tfrac{y^{(i)}_2}{\varepsilon},t\big).
	\end{multline}
	This necessarily leads to modifications of both the node–layer and
	boundary–layer ansatzes. Interestingly, the closer $\alpha$ is to $1$, the more
	intermediate terms appear between integer powers of $\varepsilon$. For
	$\alpha\in(\tfrac{4}{3},\tfrac{3}{2})$, the asymptotic scale becomes
	$
	\varepsilon^0, \ \varepsilon^{\alpha -1}, \  \varepsilon^{2\alpha -2},  \  \varepsilon^1, \  \varepsilon^{3\alpha -3},\  \varepsilon^{\alpha}, \  \varepsilon^{2\alpha -1}, \  \varepsilon^2, \  \varepsilon^{3\alpha -2}, \ \varepsilon^{\alpha +1},\ldots,
	$
	while the order of the first two terms  remains unchanged.  The term
	$\varepsilon^{\alpha-1} w_{\alpha-1}^{(i)}$ is crucial, as it incorporates the
	effect of the boundary conditions on the lateral sides of the thin branches.
	For integer values of $\alpha$, several terms in \eqref{reg-1} become of the
	same order, and this case therefore requires a separate and more delicate
	analysis, as was carried out here for $\alpha=2$.
	
	\smallskip 
	
	\textbf{3.} 
	If $\beta\neq 1$, the asymptotic scale will depend simultaneously on
	$\beta$, and identifying this dependence is one of the main tasks in future
	asymptotic studies. A natural first step is the case $\alpha=2$ and $\beta=0$.
	Here the influence of the junction node appears already in the leading terms
	$\{w_0^{(i)}\}_{i=1}^3$ through a nonstandard Kirchhoff condition at the vertex
	(analogous to \eqref{Kirch-2}). Instead of the coupled recursive limit problems
	\eqref{relations+} and \eqref{limit_prob}, one obtains a new type of limit
	problem linking the bulk problems \eqref{relations+} with the corresponding
	hyperbolic problem on the graph via a nonstandard vertex condition. This may
	produce new qualitative effects. The main challenge is to prove well–posedness
	and regularity of the resulting limit problem.
	
	A further promising direction is the study of transport problems in
	three–dimensional geometries with thin fissures composed of a finite number of
	interconnected thin cylinders. In such settings, singular behaviour of bulk
	solutions will appear both near the edges of the corresponding graph and at its
	vertices. We hope that the methodology developed in this article will be
	helpful for these investigations.
	
	\section*{Acknowledgments}
	The first author gratefully acknowledges support from the MSCA4Ukraine grant, which made it possible to carry out this research at the University of Stuttgart. This project has received funding through the MSCA4Ukraine project (ID: 101101923), funded by the European Union.  Views and opinions expressed are however those of the author(s) only and do not 
	necessarily reflect those of the European Union, the European Research Executive Agency or the 
	MSCA4Ukraine Consortium. Neither the European Union nor the European Research Executive 
	Agency, nor the MSCA4Ukraine Consortium as a whole nor any individual member institutions of 
	the MSCA4Ukraine Consortium can be held responsible for them.
	
	All authors acknowledge  (partial) funding from the Deutsche Forschungsgemeinschaft (DFG, German Research Foundation) – Project Number 327154368 – SFB 1313.


\end{document}